\def\wt{\widetilde}
\def\wh{\widehat}
\def\ov{\overline}
\def \im{{\rm Im}}
 \def\up{\upharpoonright}
\def\cH{\mathcal H}
\def\cK{\mathcal K} \def\cL{\mathcal L}
 \def\cN{\mathcal N} \def\cP{\mathcal P} 
 \def\cT{\mathcal T} \def\cI{\mathcal I}
\def \gH{\mathfrak H}   \def \gN{\mathfrak N}
\def \bC{\mathbb C}    \def\bR{\mathbb R}
\def\bH{\mathbb H} 
\def \l{\lambda}
 \def \b{\beta}    
 \def \t{\theta} \def\g {\gamma}
  \def\om {\omega} \def\Om {\Omega} 
\def \f{\varphi}  \def \G{\Gamma} \def\D {\Delta} 
\def \C{\widetilde {\mathcal C}}
\def \CA{\C(\cH_0,\cH_1)}
\def \cd {\cdot}
\def\AC {AC(\cI; \bH)}   \def\LI {L_\Delta^2(\cI)}
\def\lI {\cL_\Delta^2(\cI)}
\def\tma{\cT_{\max}} \def\tmi{\cT_{\min}} \def\Tma{T_{\max}} \def\Tmi{T_{\min}}
\def \dom {{\rm dom}\,}  \def \ran {{\rm ran}\,}  \def \ker{{\rm ker\,}}
 \def \mul {{\rm mul}\,}
  \def\tm{\times}
\def  \RH {\wt R_+(\cH_0,\cH_1)}
\def \pair {\tau=\{\tau_+,\tau_-\}}
\def \CR {\bC\setminus\bR}
\newcommand {\lo}[1] {\cL_\D^2[#1,\bH ]}
\def\bt{\{\cH,\G_0,\G_1\}}
\def\bta{\{\cH_0\oplus \cH_1,\Gamma _0,\Gamma _1\}}
\newtheorem{theorem}{Theorem}[section]
\newtheorem{proposition}[theorem]{Proposition}
\newtheorem{corollary}[theorem]{Corollary}
\newtheorem{lemma}[theorem]{Lemma}
\theoremstyle{definition}
\theoremstyle{definition}
\newtheorem {definition} [theorem]{Definition}
\theoremstyle{remark}
\newtheorem{remark}[theorem]{Remark}
\numberwithin{equation}{section}
\begin{document}
\title[On generalized resolvents and characteristic matrices]
{On generalized resolvents and characteristic matrices of first-order symmetric systems }
\author {Vadim Mogilevskii}
\address{Institute of Applied Mathematics and Mechanics, NAS of
Ukraine,  R. Luxemburg Str. 74,  83050 Donetsk,    Ukraine}
\address{Department of Mathematics,  Lugans'k National University,
 Oboronna Str. 2, 91011  Lugans'k,   Ukraine}
\email{vim@mail.dsip.net}

\subjclass[2010]{34B08,34B40,47A06,47B25}
\keywords{First-order symmetric system, boundary problem with a spectral parameter, generalized resolvent, characteristic matrix}

\begin{abstract}
We study general (not necessarily Hamiltonian) first-order symmetric system $J y'-B(t)y=\D(t) f(t)$ on an interval $\cI=[a,b) $ with the regular endpoint $a$ and singular endpoint $b$. It is assumed that the deficiency indices $n_\pm(\Tmi)$ of the corresponding
minimal relation $\Tmi$ in $\LI$ satisfy $n_-(\Tmi)\leq n_+(\Tmi)$. We describe all generalized resolvents $y=R(\l)f, \; f\in\LI,$ of $\Tmi$ in terms of boundary problems with $\l$-depending boundary conditions imposed on regular and singular boundary values of a function $y$ at the endpoints $a$ and $b$ respectively. We also parametrize all characteristic matrices $\Om(\l)$ of the system immediately in terms of boundary conditions. Such a parametrization is given both  by the block representation  of $\Om(\l)$ and by the formula similar to the well-known Krein formula for resolvents. These results develop the \u{S}traus' results on generalized resolvents and characteristic matrices of differential operators.
\end{abstract}
\maketitle
\section{Introduction}
Let  $H$ and $\wh H$ be finite dimensional Hilbert spaces and let $\bH:=H\oplus\wh H \oplus H$. Denote also by $[\bH]$ the set of all linear
operators in $\bH$. We study first-order symmetric systems  of differential equations defined on an interval $\cI=[a,b), -\infty<a <b\leq\infty,$ with the
regular endpoint $a$ and regular or singular endpoint $b$. Such a system is of the form \cite{Atk,GK}
\begin {equation}\label{1.2}
J y'-B(t)y=\D(t) f(t), \quad t\in\cI,
\end{equation}
where $B(t)=B^*(t)$ and $\D(t)\geq 0$ are  $[\bH]$-valued functions on $\cI$ and
\begin {equation} \label{1.3}
J=\begin{pmatrix} 0 & 0&-I_H \cr 0& i I_{\wh H}&0\cr I_H&
0&0\end{pmatrix}:H\oplus\wh H\oplus H \to H\oplus\wh H\oplus H.
\end{equation}
With  \eqref{1.2} one associates the homogeneous system
%Throughout the
%paper we  assume  that the system \eqref{1.2} is definite. The latter %means
%that for any $\l\in\bC$ each common solution of the equations
%
\begin {equation}\label{1.5}
J y'-B(t)y=\l \D(t) y, \quad \l \in\bC.
\end{equation}

%and $\D(t)y(t)=0$  (a.e. on $\cI$) is trivial, i.e., $y(t)=0,\;t\in\cI$.
We assume that system \eqref{1.2} is definite (see Definition \ref{def3.1}). Recall also that system \eqref{1.2} is called a Hamiltonian system if $\wh H=\{0\}$ and hence
\begin {equation}\label{1.5a}
J=\begin{pmatrix}  0&-I_H \cr  I_H& 0\end{pmatrix}:H\oplus H \to H\oplus H.
\end{equation}

As  is known,  the extension theory of symmetric linear relations gives a natural framework   for investigation of the boundary value problems for symmetric systems (see \cite{BHSW10,DLS88,DLS93,Kac03,LesMal03,Orc} and references therein). According to \cite{Kac03,LesMal03, Orc}  the system \eqref{1.2} generates the minimal linear relation $\Tmi$ and the maximal
linear relation $\Tma$ in  the Hilbert space $\LI$ of all functions $f(\cd):\cI\to\bH$  satisfying $||f||_\D^2:= \int\limits_\cI
(\D(t)f(t),f(t))_\bH\,dt<\infty$. It turns out that $\Tmi$ is a closed symmetric relation and $\Tma=\Tmi^*$. Moreover, the  deficiency indices $n_\pm(\Tmi)$ of $\Tmi$ satisfy $\dim H\leq n_\pm(\Tmi)\leq \dim \bH$.

According to \cite{Bru78,DLS88,Sht57} each generalized  resolvent $R(\l)$ of $\Tmi$ admits the representation
\begin {equation*}
(R(\l) f)(x)=\int_\cI Y_0(x,\l)(\Om(\l)+\tfrac 1 2 \, {\rm
sgn}(t-x)J)Y_0^*(t,\ov\l)\D(t) f(t)\,dt , \quad f=f(\cd)\in\LI.
\end{equation*}
Here $Y_0(\cd,\l)$ is an $[\bH]$-valued operator solution of Eq.  \eqref{1.5} satisfying $_0(a,\l)=I_{\bH}$ and $\Om(\cd):\CR\to [\bH]$ is a Nevanlinna operator function called a characteristic matrix of the system \eqref{1.2} corresponding to $R(\l)$. By using the matrix $\Om(\cd)$ one constructs a spectral function generating an eigenfunction expansion of the system \eqref{1.2} (see e.g. \cite{DLS93}).

A somewhat other approach in the theory of generalized resolvents of $\Tmi$ is based on an application of boundary problems for the system \eqref{1.2}. Namely, assume that \eqref{1.2} is a Hamiltonian system and that $\Tmi $ has minimal  deficiency indices $n_\pm(\Tmi)=\dim H$. Then for each $\l\in\CR$ there exists a unique operator solution $v(t,\l)(\in [H, H\oplus H])$ of Eq. \eqref{1.5} such that $v(\cd,\l)h\in\LI, \; h\in H,$ and
\begin {equation}\label{1.7}
v(a,\l)=\begin{pmatrix}m(\l)\cr -I_H \end{pmatrix}:H\to H\oplus H, \quad \l\in\CR.
\end{equation}
Equality \eqref{1.7} defines a Nevanlinna operator function $m(\cd):\CR\to [H]$ called the Titchmarsh-Weyl coefficient (see e.g. \cite{HinSch93}). Moreover, the following holds: (1) for each generalized resolvent $R(\l)$ of $\Tmi$ there exists a unique holomorphic operator function $C_a(\cd):\CR\to  [H\oplus H,H] $ satisfying
\begin {equation}\label{1.8}
\ran C_a(\l)=H, \quad i\im \l\cd C_a(\l)J C_a^*(\l)\geq 0, \quad C_a(\l)J C_a^*(\ov\l)=0,\quad \l\in\CR
\end{equation}
and such that a function $y(t)=(R(\l)f)(t), \; f=f(\cd)\in \LI,$ is an $L_\D^2$-solution of the following boundary problem with $\l$-depending boundary condition:
\begin {gather}
J y'-B(t)y = \l \D(t) y+\D(t)f(t), \quad t\in\cI\label{1.9}\\
C_a(\l)y(a)= 0, \quad \l\in\CR.\label{1.10}
\end{gather}

(2) The characteristic matrix $\Om(\cd)$ corresponding to $R(\l)$ is of the form

\begin {equation}\label{1.11}
\Om(\l)=\begin{pmatrix}
m(\l)-m(\l)(\tau(\l) +m(\l))^{-1}m(\l) & -\tfrac 1 2 I +m(\l) (\tau\l) +m(\l))^{-1} \cr -\tfrac 1 2 I +(\tau(\l) +m(\l))^{-1}m(\l) & -(\tau(\l) + m(\l))^{-1}\end{pmatrix},
\end{equation}
where $\tau(\l):=\ker C_a(\l), \;\l\in\CR,$  is a Nevanlinna family of linear relations in $H$.

Statement (1) readily follows from the results of \cite{DijSno74,Sht70}, while statement (2) was proved in \cite{DLS88} (for the Sturm-Liouville operator see \cite{Sht55}).

Note that the case $n_+(\Tmi)=n_-(\Tmi)>\dim H$ is more complicated, because in this case only one boundary condition \eqref{1.10} at the endpoint $a$ is not sufficient for construction of a spectral function of the system \eqref{1.2}.

In the present paper we extend the above statements to general (not necessarily Hamiltonian) symmetric systems \eqref{1.2} with $n_-(\Tmi)\leq n_+(\Tmi)$. Our main result is a description of all generalized resolvents and characteristic matrices of such systems immediately in terms of boundary conditions. We describe all characteristic matrices by analogy with formula \eqref{1.11} and also by the formula similar to the well known Krein formula for resolvents.

To simplify the presentation of our results we assume within this section that  system \eqref{1.2} satisfies $n_+(\Tmi)= n_-(\Tmi)$. We show that in this case there exist a finite-dimensional Hilbert space  $\cH_b$ and a surjective linear mapping $\G_b:\dom\Tma\to \bH_b$ such that
\begin {equation*}
[y,z]_b(=\lim_{t \uparrow b}(J y(t),z(t)))=(J_b\G_b y,\G_b z), \quad y,z \in\dom\Tma.
\end{equation*}
Here $\bH_b=\cH_b\oplus \wh H\oplus \cH_b$ and $J_b$ is an operator in $\bH_b$ given by
\begin {equation}\label{1.12}
J_b=\begin{pmatrix} 0& 0& -I_{\cH_b}\cr 0 & i I_{\wh H} & 0 \cr I_{\cH_b} & 0 &
0\end{pmatrix}:\underbrace{\cH_b\oplus \wh H\oplus \cH_b }_{\bH_b}\to
\underbrace{\cH_b\oplus \wh H\oplus \cH_b}_{\bH_b}.
\end{equation}
In fact, $\G_b y $ is a singular boundary value of a function $y$  in the sense of \cite[Chapter 13.2]{DunSch} (for more details see Remark 3.5 in \cite{AlbMalMog13}).

Assume that $\cH_b$ and $\G_b$ are fixed and let $\cH=H\oplus\wh H\oplus\cH_b$. With each Nevanlinna family of linear relations (in particular operators) $\tau=\tau(\l)$ in $\cH$ we associate a pair of holomorphic  operator functions $C_a(\l)=C_{\tau,a}(\l)(\in [\bH,\cH])$ and  $C_b(\l)=C_{\tau,b}(\l)(\in [\bH_b,\cH])$ satisfying for all $\l\in\CR$ the relations (cf. \eqref{1.8})
 \begin {gather}
\ran (C_a(\l), \; C_b(\l))=\cH \label{1.14}\\
i \im \l\cd(C_a(\l)J C_a^*(\l)-C_b(\l)J_b C_b^*(\l))\geq 0, \quad C_a(\l)J
C_a^*(\ov\l)=C_b(\l)J_b C_b^*(\ov\l)\label{1.15}
\end{gather}
We show that for each generalized resolvent $R(\l)$ of $\Tmi$ there exists a unique Nevanlinna family of linear relations $\tau=\tau(\l)$ in $\cH$ such that a function $y(t)=(R(\l)f)(t), \; f=f(\cd)\in \LI,$ is an $L_\D^2$-solution of the following boundary problem (cf. \eqref{1.9},  \eqref{1.10})
\begin {gather}
J y'-B(t)y = \l \D(t) y+\D(t)f(t), \quad t\in\cI\label{1.16}\\
C_{\tau,a}(\l)y(a)+C_{\tau,b}(\l)\G_b y= 0, \quad \l\in\CR.\label{1.17}
\end{gather}
Note, that  \eqref{1.17} is a boundary condition imposed on boundary values of a function $y\in\dom\Tma$ (more precisely, on the regular value $y(a)$ and singular value $\G_b y$). One may consider
$\tau=\tau(\l)$ as a (Nevanlinna) boundary parameter, since $R(\l)$ runs over the set of all generalized resolvents of $\Tmi$ when $\tau$ runs over the set of all
Nevanlinna families of linear relations in $\cH$. To indicate this fact explicitly we write $R(\l)=R_\tau(\l)$ and $\Om(\l)=\Om_\tau(\l)$ for the generalized resolvent of $\Tmi$ and the corresponding characteristic matrix respectively.

The boundary  problem \eqref{1.16}, \eqref{1.17} defines a canonical
resolvent $R_\tau(\l)$ of $\Tmi$ if and only if $\tau=\tau^*$.  In this case  $C_{\tau,a}(\l)\equiv C_a, \; C_{\tau,b}(\l)\equiv C_b$ and the operators $C_a$ and $C_b$ satisfy
\begin {equation}\label{1.18}
\ran (C_a,\, C_b)=\cH \;\;\; \text{and} \;\;\; C_a J C_a^*=C_b J C_b^*.
\end{equation}
Moreover, $R_\tau(\l)=(\wt T^\tau - \l)^{-1}$ with   $\wt T^\tau = (\wt T^\tau)^*$  given by the  boundary conditions:
\begin {equation}\label{1.19}
\wt T^\tau =\{\{y,f\}\in\Tma :C_a y(a)+C_b\G_by=0\}.
\end{equation}
Thus, the equalities \eqref{1.19} and \eqref{1.18} gives a parametrization of all self-adjoint extensions $\wt T=\wt T^\tau$ of $\Tmi$ in terms of boundary conditions.  Note that in the case of the regular endpoint $b$  \eqref{1.18} and \eqref{1.19} take the form  of self-adjoint boundary conditions from \cite{Atk,GK}. Moreover, for Hamiltonian systems with singular endpoint $b$ the description of all extensions $\wt T=\wt T^*$ of $\Tmi$ in the form \eqref{1.18}, \eqref{1.19} was obtained in \cite{Kra89}.

It turns out that for each boundary parameter $\tau$ there exists a unique $[\bH]$-valued operator solution $Z_\tau(\cd,\l), \;\l\in\CR,$ of Eq. \eqref{1.5} such that $Z_\tau(\cd,\l)h\in\LI, \; h\in \bH,$ and the following boundary condition is satisfied:
\begin {equation*}
C_{\tau,a}(\l)(Z_\tau(a,\l)+J)h+ C_{\tau,b}(\l)\G_b (Z_\tau(\cd,\l)h)=0, \quad h\in\bH, \;\;\l\in\CR.
\end{equation*}
Moreover, the characteristic matrix $\Om_\tau(\cd)$ is
\begin {equation}\label{1.21}
\Om_\tau(\l)=Z_\tau(a,\l)+\tfrac 1 2 J, \quad \l\in\CR
\end{equation}
and the following inequality holds
\begin {equation} \label{1.22}
(\im \l)^{-1}\cd \im \Om_\tau(\l)\geq \int_\cI Z_\tau^*(t,\l) \D(t)
Z_\tau(t,\l) \, dt, \quad  \l\in\CR.
\end{equation}
Note that definition of the characteristic matrix $\Om_\tau(\cd)$ by means of \eqref{1.21} is similar to that of the Titchmarsh-Weyl  coefficient $m(\cd)$ by means of \eqref{1.7}. Observe also that formula \eqref{1.22} is similar to well-known formulas for various classes of boundary problems (see e.g. \cite{Gor66,Ber}).

The main result of the paper is a parametrization of all characteristic matrices $\Om(\cd)$ of the system \eqref{1.2} immediately in terms of the boundary parameter $\tau$. This result can be formulated in the form of the following theorem.
\begin{theorem}\label{th1.1}
There exist operator functions $\Om_0(\l)(\in [\bH]), \; S(\l)(\in [\cH,\bH])$ and a Nevanlinna operator function $M(\l)(\in [\cH]), \; \l\in\CR,$ such that the equality
\begin {equation}\label{1.23}
\Om(\l)=\Om_\tau(\l)=\Om_0(\l)-S(\l)(\tau(\l)+M(\l))^{-1}S^*(\ov\l), \quad \l\in\CR
\end{equation}
establishes a bijective correspondence between all (Nevanlinna) boundary parameters $\tau=\tau(\l)$ and all characteristic matrices $\Om(\cd)$ of the system \eqref{1.2}. Moreover, for each boundary parameter $\tau$  the corresponding characteristic matrix $\Om(\l)=\Om_\tau(\l)$ admits the representation
\begin {equation}\label{1.24}
\Om(\l)=X \wt \Om_\tau(\l)X^*, \quad \l\in\CR,
\end{equation}
where $X\in [\cH\oplus\cH,\bH]$ is a certain operator (see \eqref{4.98}) and $\wt \Om_\tau(\cd):\CR\to [\cH\oplus\cH]$ is a Nevanlinna operator function given by the block matrix representation (cf. \eqref{1.11})
\begin {equation}\label{1.25}
\wt\Om_{\tau}(\l)=\begin{pmatrix} M(\l)-M(\l)(\tau(\l) +M(\l))^{-1}M(\l) & -\tfrac 1 2 I_{\cH} +M(\l) (\tau\l) +M(\l))^{-1} \cr -\tfrac 1 2 I_{\cH} +(\tau(\l) +M(\l))^{-1}M(\l) & -(\tau(\l) + M(\l))^{-1} \end{pmatrix}
\end{equation}
\end{theorem}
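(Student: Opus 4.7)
The plan is to construct an ordinary boundary triplet $\Pi=\{\cH\oplus\cH,\G_0,\G_1\}$ for $\Tma=\Tmi^*$ with boundary space $\cH\oplus\cH$ (where $\cH=H\oplus\wh H\oplus\cH_b$), compute its Weyl function $M(\cdot)$ and $\g$-field $\g(\cdot)$, and then translate the abstract Krein formula for generalized resolvents of $\Tmi$ into a formula for the corresponding characteristic matrices via the identity $\Om_\tau(\l)=Z_\tau(a,\l)+\tfrac12 J$ of (1.21). The starting point is the Lagrange identity
\begin{equation*}
(\Tma y,z)-(y,\Tma z)=(J_b\G_b y,\G_b z)-(Jy(a),z(a)),\quad y,z\in\dom\Tma,
\end{equation*}
coming from the regular value $y(a)\in\bH$ together with the singular boundary map $\G_b$. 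Splitting this symplectic form on $\bH\oplus\bH_b$ into two halves living in $\cH$ produces the blocks defining $\G_0,\G_1$, paralleling the operators $C_a,C_b$ of (1.18).

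With $\Pi$ in hand, the standard Krein resolvent formula reads
\begin{equation*}
R_\tau(\l)=R_{\tau_0}(\l)-\g(\l)(\tau(\l)+M(\l))^{-1}\g^*(\ov\l),\quad \l\in\CR,
\end{equation*}
for a fixed self-adjoint reference parameter $\tau_0$. Inserting the integral representation of $R(\l)$ via $Y_0$ and $\Om(\cdot)$ on both sides and extracting the contribution that encodes $\Om_\tau(\l)=Z_\tau(a,\l)+\tfrac12 J$, I obtain
\begin{equation*}
\Om_\tau(\l)-\Om_{\tau_0}(\l)=-S(\l)(\tau(\l)+M(\l))^{-1}S^*(\ov\l),
\end{equation*}
with $S(\l)\in[\cH,\bH]$ defined by reading off the values of $\g(\l)h$ at the regular endpoint $a$ (suitably adjusted by the $\tfrac12 J$ term). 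Setting $\Om_0:=\Om_{\tau_0}$ gives (1.23); Nevanlinnaness of $M$ and holomorphy of the right-hand side are automatic from the boundary-triplet framework.

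For the block representation (1.24)--(1.25), observe that $\wt\Om_\tau(\cdot)$ is the \u{S}traus-type characteristic matrix (cf.\ (1.11)) associated with the abstract pair $(M,\tau)$ in the boundary space $\cH\oplus\cH$; a direct algebraic check shows $\wt\Om_\tau(\cdot)\in[\cH\oplus\cH]$ is Nevanlinna for every Nevanlinna $\tau$. It then remains to exhibit a single $\tau$-independent operator $X\in[\cH\oplus\cH,\bH]$ (the one specified by (4.98)) such that expansion of the block matrix in (1.25) and conjugation by $X$ reproduces the right-hand side of (1.23); comparing the two Krein-type formulas matches $X$ in terms of $Y_0(a,\cdot)$, the matrix $J$, and the block structure of $(\G_0,\G_1)$ at $a$. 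Surjectivity of $\tau\mapsto\Om_\tau$ then follows from the known bijection $\tau\mapsto R_\tau$ composed with the one-to-one correspondence $R\mapsto\Om$ for characteristic matrices.

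The step I expect to be hardest is injectivity of $\tau\mapsto\Om_\tau$: since $\Om_\tau(\l)$ is only the ``trace at $a$'' of the integral kernel of $R_\tau(\l)$, a priori distinct boundary parameters could produce the same characteristic matrix. Ruling this out requires combining the surjectivity condition (1.14) for the pair $(C_{\tau,a}(\l),C_{\tau,b}(\l))$ with an injectivity statement for $X$ on the effective range of $\wt\Om_\tau(\l)$; equivalently, the $\g$-field $\g(\l)$ must be shown to be determined by its behaviour at $a$ together with the canonical data of $\Pi$. This is precisely the place where the fine structure of the boundary triplet and of the singular boundary map $\G_b$ does the essential work.
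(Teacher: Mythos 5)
Your outline follows essentially the same route as the paper's actual proof: the decomposing boundary triplet of Proposition \ref{pr3.3} with Weyl function $M(\cd)$, the abstract Krein formula for $R_\tau(\l)$ with reference extension $A_0=\ker\G_0$, identification of $S(\l)$ as the value at the regular endpoint $a$ of the operator solution representing the $\g$-field, and a direct block-matrix comparison for \eqref{1.24}--\eqref{1.25}; this is exactly how Theorems \ref{th4.7}, \ref{th4.11} and Proposition \ref{pr4.9} proceed. The one place you go astray is the final paragraph: injectivity of $\tau\mapsto\Om_\tau$ is not the hard step, and none of the apparatus you invoke (injectivity of $X$ on effective ranges, the fine structure of $\G_b$) is needed. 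By Theorem \ref{th4.6} the characteristic matrix determines the generalized resolvent completely through the integral representation with kernel $Y_0(x,\l)(\Om(\l)+\tfrac 1 2\,{\rm sgn}(t-x)J)Y_0^*(t,\ov\l)$, so the correspondence $R\leftrightarrow\Om$ is one-to-one in both directions; composing with the bijection $\tau\mapsto R_\tau$ of Theorem \ref{th4.3} yields injectivity and surjectivity of $\tau\mapsto\Om_\tau$ simultaneously. A second, smaller caution: the identity $\Om_\tau(\l)=Z_\tau(a,\l)+\tfrac 1 2 J$ of \eqref{1.21} is derived in the paper as a consequence of \eqref{1.23} (Theorem \ref{th4.10}), so using it as an input to derive \eqref{1.23} would require an independent proof; the cleaner route, which the paper takes, is to read off the kernel of the correction term $\g_+(\l)T_\tau(\l)\g_-^*(\ov\l)$ directly from $Z_\pm(t,\l)=Y_0(t,\l)Z_\pm(a,\l)$ and compare with the uniqueness statement of Theorem \ref{th4.6}.
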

Note that the operator functions $\Om_0(\cd), \; S(\cd)$ and $M(\cd)$ in \eqref{1.23} are defined in terms of the boundary values of respective $L_\D^2$-operator solutions of Eq.  \eqref{1.5}. Observe also  that in the case of the Hamiltonian system \eqref{1.2} with $n_\pm(\Tmi)=\dim H$ one has $\cH=H, \; X=I_{\bH}, \; M(\l)=m(\l)$ and hence $\Om(\l)(=\Om_\tau(\l))=\wt \Om_\tau(\l)$. This implies that equality \eqref{1.11} is a particular case of \eqref{1.24}, \eqref{1.25}.
\section{Preliminaries}
\subsection{Notations}
The following notations will be used throughout the paper: $\gH$, $\cH$ denote
Hilbert spaces; $[\cH_1,\cH_2]$  is the set of all bounded linear operators
defined on the Hilbert space $\cH_1$ with values in the Hilbert space $\cH_2$;
$[\cH]:=[\cH,\cH]$; $\bC_+\,(\bC_-)$ is the upper (lower) half-plane  of the
complex plane.

Let $\wt \cH$ be a Hilbert space and let $\cH$ be a subspace in $\wt \cH$. We
denote by $P_{\wt\cH,\cH}(\in [\wt\cH,\cH])$ the orthoprojection in $\wt\cH$
onto $\cH$. Moreover, we denote by $I_{\cH,\wt\cH}(\in [\cH,\wt\cH])$ the
embedding operator of the subspace $\cH$ into $\wt\cH$. It is clear that
$P_{\wt\cH,\cH}^*=I_{\cH,\wt\cH}$.

Recall that a closed linear   relation   from $\cH_0$ to $\cH_1$ is a closed
linear subspace in $\cH_0\oplus\cH_1$. The set of all closed linear relations
from $\cH_0$ to $\cH_1$ (in $\cH$) will be denoted by $\C (\cH_0,\cH_1)$
($\C(\cH)$). A closed linear operator $T$ from $\cH_0$ to $\cH_1$  is
identified  with its graph $\text {gr}\, T\in\CA$.

For a linear relation $T\in\C (\cH_0,\cH_1)$  we denote by $\dom T,\,\ran T,
\,\ker T$ and $\mul T$  the domain, range, kernel and the multivalued part of
$T$ respectively. Recall that $\mul T$ ia a linear manifold in $\cH_1$ defined by
\begin{gather*}
\mul T:=\{h_1\in \cH_1:\{0,h_1\}\in T\}.
\end{gather*}
Recall also that the inverse and adjoint linear relations of $T$ are the
relations $T^{-1}\in\C (\cH_1,\cH_0)$ and $T^*\in\C (\cH_1,\cH_0)$ defined by
\begin{gather}
T^{-1}=\{\{h_1,h_0\}\in\cH_1\oplus\cH_0:\{h_0,h_1\}\in T\}\nonumber\\
T^* = \{\{k_1,k_0\}\in \cH_1\oplus\cH_0:\, (k_0,h_0)-(k_1,h_1)=0, \;
\{h_0,h_1\}\in T\}\label{2.0}.
\end{gather}
For a linear relation $T\in \C(\cH)$ we denote by $\rho (T):=\{\l \in
\bC:(T-\l)^{-1}\in [\cH]\}$  the resolvent set of $T$.

Recall also that an operator function $\Phi
(\cd):\bC\setminus\bR\to [\cH]$ is called a Nevanlinna function if it is
holomorphic and satisfies $\im\, \l\cd \im \Phi (\l)\geq 0 $ and $\Phi ^*(\l)=\Phi (\ov \l), \; \l\in\bC\setminus\bR$.
\subsection{The classes $\wt R_+(\cH_0,\cH_1)$ and $\wt R(\cH)$}\label{sub2.2}
Let $\cH_0$ be a Hilbert space, let $\cH_1$ be a subspace in $\cH_0$ and let
$\tau =\{\tau_+,\tau_-\}$ be a collection of holomorphic functions
$\tau_\pm(\cd):\bC_\pm\to\CA$. In the paper we systematically deal with
collections $\pair$ of the special class $\wt R_+(\cH_0,\cH_1)$. Definition and
detailed characterization of this class can be found in our paper
\cite{Mog13.2} (see also \cite{Mog06.1,Mog11,AlbMalMog13}, where the notation
$\wt R (\cH_0,\cH_1)$ were used instead of $\wt R_+(\cH_0,\cH_1)$). If
$\dim\cH_1<\infty$, then according to \cite{Mog13.2} the collection $\pair\in
\wt R_+(\cH_0,\cH_1)$ admits the representation
\begin {equation}\label{2.1}
\tau_+(\l)=\{(C_0(\l),C_1(\l));\cH_0\}, \;\;\l\in\bC_+; \;\;\;\;
\tau_-(\l)=\{(D_0(\l),D_1(\l));\cH_1\}, \;\;\l\in\bC_-
\end{equation}
by means of two pairs of holomorphic operator functions
\begin {equation*}
(C_0(\l),C_1(\l)):\cH_0\oplus\cH_1\to\cH_0, \;\;\l\in\bC_+,\;\; \text{and}\;\;
(D_0(\l),D_1(\l)):\cH_0\oplus\cH_1\to\cH_1, \;\;\l\in\bC_-
\end{equation*}
(more precisely, by equivalence classes of such pairs). The equalities
\eqref{2.1} mean that
\begin {equation}\label{2.2}
\begin{array}{c}
\tau_+(\l)=\{\{h_0,h_1\}\in\cH_0\oplus\cH_1: C_0(\l)h_0+C_1(\l)h_1=0\},
\;\;\;\l\in\bC_+\\
\tau_-(\l)=\{\{h_0,h_1\}\in\cH_0\oplus\cH_1: D_0(\l)h_0+D_1(\l)h_1=0\},
\;\;\;\l\in\bC_-.
\end{array}
\end{equation}
In \cite{Mog13.2} the class $\wt R_+(\cH_0,\cH_1)$ is characterized both in
terms of $\CA$-valued functions $\tau_\pm(\cd)$ and in terms of operator
functions $C_j(\cd)$ and $D_j(\cd), \; j\in\{0,1\},$ from \eqref{2.1}.

If $\cH_1=\cH_0=:\cH$, then the class $\wt R(\cH):=\wt R_+ (\cH,\cH)$ coincides
with the well-known class of Nevanlinna $\C (\cH)$-valued functions $\tau
(\cd)$ (see, for instance, \cite{DM00}). In this case the collection
\eqref{2.1} turns into the Nevanlinna pair
\begin {equation}\label{2.3}
\tau(\l)=\{(C_0(\l),C_1(\l));\cH\}, \quad \l\in\CR,
\end{equation}
with $C_0(\l),C_1(\l)\in [\cH]$. Recall also that the subclass $\wt
R^0(\cH)\subset \wt R(\cH)$ is defined as the set of all $\tau(\cd)\in\wt
R(\cH)$ such that $\tau(\l)\equiv \t(=\t^*), \; \l\in\CR$. This implies that
$\tau(\cd)\in \wt R^0(\cH)$  if and only if
\begin {equation}\label{2.4}
\tau(\l)\equiv \{(C_0,C_1);\cH\},\quad \l\in\CR,
\end{equation}
with some operators $C_0,C_1\in [\cH]$ satisfying  $\im (C_1C_0^*)=0 $ and
$0\in\rho (C_0\pm i C_1)$ (for more details see e.g. \cite[Remark
2.5]{AlbMalMog13}).
\subsection{Boundary triplets and Weyl functions for symmetric relations}
Recall that a linear relation $A\in\C (\gH)$ is called symmetric (self-adjoint)
if $A\subset A^*$ (resp. $A=A^*$).

 Let $A$ be a closed  symmetric linear relation in the Hilbert space $\gH$,
let $\gN_\l(A)=\ker (A^*-\l)\; (\l\in\bC)$ be a defect subspace of $A$, let
$\wh\gN_\l(A)=\{\{f,\l f\}:\, f\in \gN_\l(A)\}$ and let $n_\pm (A):=\dim
\gN_\l(A)\leq\infty, \; \l\in\bC_\pm,$ be deficiency indices of $A$.

The following definitions are well known.
\begin{definition}\label{def2.1a}
The operator function $R(\cd):\CR\to [\gH]$  is called a generalized resolvent
of $A$ if there exist a Hilbert space $\wt \gH\supset\gH$ and a self-adjoint
relation $\wt A\in \C (\wt\gH)$ such that $A\subset \wt A$ and the following
equality holds
\begin {gather}\label{2.5}
R(\l) =P_\gH (\wt A- \l)^{-1}\up \gH, \quad \l \in \CR.
\end{gather}
The relation $\wt A$ in \eqref{2.5} is called an exit space extension of $A$.
\end{definition}
\begin{definition}\label{def2.1b}
The generalized resolvent \eqref{2.5} is called canonical if $\wt \gH=\gH$,
i.e., if $R(\l)=(\wt A-\l)^{-1}, \; \l\in\CR,$ is the resolvent of $\wt A=\wt
A^* \in \C (\gH), \; \wt A\supset A$.
\end{definition}
Clearly, canonical resolvents  exist if and only if $n_+(A)=n_-(A)$.

Next we recall  definitions of boundary triplets, the corresponding Weyl
functions, and $\gamma$-fields following  \cite{DM91, Mal92,
Mog06.2,Mog13.2}.

 Assume that $\cH_0$ is a Hilbert space,  $\cH_1$ is a subspace
in $\cH_0$ and   $\cH_2:=\cH_0\ominus\cH_1$, so that $\cH_0=\cH_1\oplus\cH_2$.
Denote by $P_j$ the orthoprojection   in $\cH_0$ onto $\cH_j,\; j\in\{1,2\} $.

\begin{definition}\label{def2.2}
 A collection $\Pi_+=\bta$, where
$\G_j: A^*\to \cH_j, \; j\in\{0,1\},$ are linear mappings, is called a boundary
triplet for $A^*$, if the mapping $\G :\wh f\to \{\G_0 \wh f, \G_1 \wh f\}, \wh
f\in A^*,$ from $A^*$ into $\cH_0\oplus\cH_1$ is surjective and the following
Green's identity holds
\begin {equation*}
(f',g)-(f,g')=(\G_1  \wh f,\G_0 \wh g)_{\cH_0}- (\G_0 \wh f,\G_1 \wh
g)_{\cH_0}+i (P_2\G_0 \wh f,P_2\G_0 \wh g)_{\cH_2}
\end{equation*}
 holds for all $\wh
f=\{f,f'\}, \; \wh g=\{g,g'\}\in A^*$.
\end{definition}
\begin{proposition}$\,$\cite{Mog06.2}\label{pr2.3}
Let  $\Pi_+=\bta$ be a boundary triplet for $A^*$. Then:
\begin{enumerate}\def\labelenumi{\rm (\arabic{enumi})}
\item
$\;\dim \cH_1=n_-(A)\leq n_+(A)=\dim \cH_0$.
\item
$\ker \G_0\cap\ker\G_1=A$ and $\G_j$ is a bounded operator from $A^*$ onto
$\cH_j$.
\item
The equality $A_0:=\ker \G_0=\{\wh f\in A^*:\G_0 \wh f=0\}$ defines a maximal
symmetric extension $A_0$ of $A$ such that $\bC_+\subset \rho (A_0) $.
\end{enumerate}
\end{proposition}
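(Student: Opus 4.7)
The plan is to prove the three parts in the order (2), (3), (1), since each builds on the previous, with Green's identity together with the surjectivity of $\G:=(\G_0,\G_1):A^*\to\cH_0\oplus\cH_1$ serving as the recurrent tools.

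\emph{Part} (2). For $\wh f=\{f,f'\}\in A$, the definition of the adjoint yields $(f',g)-(f,g')=0$ for every $\wh g=\{g,g'\}\in A^*$, so Green's identity reduces to
\[
(\G_1\wh f,\G_0\wh g)_{\cH_0}-(\G_0\wh f,\G_1\wh g)_{\cH_0}+i(P_2\G_0\wh f,P_2\G_0\wh g)_{\cH_2}=0.
\]
Choosing $\wh g\in A^*$ by surjectivity of $\G$ so that first $\G_0\wh g=0$ with $\G_1\wh g$ running through $\cH_1$, and then $\G_1\wh g=0$ with $\G_0\wh g$ running through $\cH_0$, one extracts in turn $\G_0\wh f\in\cH_2$, $\G_1\wh f=0$, and $P_2\G_0\wh f=0$, hence $\G\wh f=0$. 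Reversing the argument shows that $\G\wh f=0$ forces $\wh f\in(A^*)^*=A$. Boundedness of $\G_j$ then follows by applying the closed graph theorem to $\G$: given $\wh f_n\to\wh f$ in $A^*$ with $\G\wh f_n\to(\eta_0,\eta_1)$, passing to the limit in Green's identity and using the same independent-choice trick forces $\G\wh f=(\eta_0,\eta_1)$.

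\emph{Part} (3). Green's identity with $\wh f,\wh g\in A_0:=\ker\G_0$ gives $(f',g)-(f,g')=0$, so $A_0$ is a closed symmetric extension of $A$. Fixing $\l\in\bC_+$ and taking $\wh f=\wh g=\{f,\l f\}\in A_0$ in Green's identity produces $(\l-\ov\l)\|f\|^2=0$; Cauchy--Schwarz upgrades this to the uniform lower bound $\|f'-\l f\|\geq\im\l\cdot\|f\|$ for every $\{f,f'\}\in A_0$, so $(A_0-\l)^{-1}$ is single-valued, bounded, and has closed range. To identify this range with the whole of $\gH$, suppose $h\perp\ran(A_0-\l)$; then $\{h,\ov\l h\}\in A_0^*\subset A^*$. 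Plugging this element into Green's identity paired with arbitrary $\wh g\in A_0$ and using $\G_1(A_0)=\cH_1$ (a consequence of surjectivity of $\G$) yields $\G_0\{h,\ov\l h\}\in\cH_2$; then Green's identity with $\wh f=\wh g=\{h,\ov\l h\}$ collapses to
\[
-2\im\l\cdot\|h\|^2=\|\G_0\{h,\ov\l h\}\|^2_{\cH_2}\geq 0,
\]
forcing $h=0$. Hence $\bC_+\subset\rho(A_0)$.

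\emph{Part} (1). By part (3) and the splitting $\{g,g'\}\mapsto\{g_0,g'_0\}+(\{g,g'\}-\{g_0,g'_0\})$, where $\{g_0,g'_0\}\in A_0$ is the unique element with $g'_0-\l g_0=g'-\l g$, one obtains the direct decomposition $A^*=A_0\dotplus\wh\gN_\l(A)$ for each $\l\in\bC_+$. Since $\G_0$ is surjective onto $\cH_0$ with kernel $A_0$, its restriction to $\wh\gN_\l(A)$ is a bijection onto $\cH_0$, giving $\dim\cH_0=\dim\gN_\l(A)=n_+(A)$. Part (2) shows that $\G$ descends to an isomorphism $A^*/A\cong\cH_0\oplus\cH_1$, while the generalized von Neumann formula supplies $\dim(A^*/A)=n_+(A)+n_-(A)$; subtracting yields $\dim\cH_1=n_-(A)$, whence $\dim\cH_1=n_-(A)\leq n_+(A)=\dim\cH_0$.

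The main obstacle is the density/surjectivity step in part (3): it is the only place where the anti-Hermitian correction term $i(P_2\G_0\wh f,P_2\G_0\wh g)_{\cH_2}$ in Green's identity plays a decisive role, and one has to invoke Green's identity twice in succession---first with one variable ranging over $A_0$ to locate $\G_0\{h,\ov\l h\}$ in $\cH_2$, and then diagonally---so that the correction term produces a nonnegative quantity which must balance $-2\im\l\cdot\|h\|^2\leq 0$.
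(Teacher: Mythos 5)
The paper does not prove Proposition \ref{pr2.3}; it is quoted from \cite{Mog06.2} without proof, so there is no internal argument to compare against. Judged on its own, your proof is essentially correct and follows the standard route for boundary triplets with $\cH_1\subset\cH_0$: parts (2) and (3) are sound, and the two-step use of Green's identity in part (3) --- first locating $\G_0\{h,\ov\l h\}$ in $\cH_2$ via $\G_1(A_0)=\cH_1$, then using the diagonal identity so that the term $i\|P_2\G_0\wh f\|^2$ forces $-2\im\l\,\|h\|^2\geq 0$ --- is exactly the mechanism that makes the non-symmetric correction term indispensable. Two small points deserve attention. First, you establish that $A_0$ is symmetric and that $\bC_+\subset\rho(A_0)$, but never say why this makes $A_0$ \emph{maximal} symmetric; you should add the standard one-line remark that a symmetric relation whose range at some nonreal $\l$ is all of $\gH$ admits no proper symmetric extension (any symmetric $B\supset A_0$ has $\ker(B-\l)=\{0\}$, so $B=A_0$ follows by comparing $\{f,f'\}\in B$ with the element of $A_0$ having the same image under $B-\l$).

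Second, and more substantively, the identification $\dim\cH_1=n_-(A)$ in part (1) is obtained by subtracting $\dim\cH_0=n_+(A)$ from $\dim(A^*/A)=n_+(A)+n_-(A)$. This cardinal subtraction is legitimate only when $n_+(A)<\infty$, whereas the proposition is stated for $n_\pm(A)\leq\infty$. In the generality claimed one needs a direct argument for $\cH_1$, namely that for $z\in\bC_-$ the map $P_1\G_0\up\wh\gN_z(A)$ is a bijection onto $\cH_1$ (this is precisely the content of Proposition \ref{pr2.4} for the lower half-plane), which in turn requires producing a second extension transversal to $\wh\gN_z(A)$ with $\bC_-$ in its resolvent set; this is not symmetric to the $\bC_+$ case because of the sign of the term $i\|P_2\G_0\wh f\|^2$. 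For the purposes of this paper the gap is harmless, since all boundary spaces here are finite dimensional ($n_\pm(\Tmi)\leq\dim\bH<\infty$), but you should either restrict the claim to that setting or supply the direct argument.
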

\begin{proposition}\label{pr2.4}$\,$ \cite{Mog06.2}
Let  $\Pi_+=\bta$ be a boundary triplet for $A^*$. Denote also by $\pi_1$ the
orthoprojection  in $\gH\oplus\gH$ onto $\gH\oplus \{0\}$. Then the operators
$\G_0\up \wh \gN_\l (A), \;\l\in\bC_+,$ and $P_1\G_0\up \wh \gN_z (A),\;
z\in\bC_-,$ isomorphically map $\wh\gN_\l (A)$ onto $\cH_0$ and $\wh\gN_z(A)$
onto $\cH_1$ respectively. Therefore the equalities
\begin {equation}\label{2.7}
\begin{array}{c}
\g_{+} (\l)=\pi_1(\G_0\up\wh \gN_\l (A))^{-1}, \;\;\l\in\Bbb C_+,\\
 \g_{-} (z)=\pi_1(P_1\G_0\up\wh\gN_z (A))^{-1}, \;\; z\in\Bbb C_-,
\end{array}
\end{equation}
\begin{gather}
M_{+}(\l)h_0=\G_1\{\g_+(\l)h_0, \l\g_+(\l)h_0\}, \quad h_0\in\cH_0, \quad
\l\in\bC_+\label{2.8}
%M_{-}(z)h_1=(\G_1+iP_2\G_0)\{\g_-(z)h_1, z\g_-(z)h_1\}, \quad h_1\in\cH_1,
%\quad z\in\bC_- \label{2.9}
\end{gather}
correctly define the operator functions $\g_{+}(\cdot):\Bbb C_+\to[\cH_0,\gH],
\; \; \g_{-}(\cdot):\Bbb C_-\to[\cH_1,\gH]$ and $M_{+}(\cdot):\bC_+\to
[\cH_0,\cH_1]$, which are holomorphic on their domains. Moreover,
\begin {equation}\label{2.9}
M_+(\mu)-M_+^*(\l)P_1+iP_2=(\mu-\ov\l)\g_+^*(\l)\g_+(\mu), \qquad
\mu,\l\in\bC_+.
\end{equation}
where $M_+(\mu)$ is considered as an operator in $\cH_0$ (it  is possible in
view of the inclusion $\cH_1\subset\cH_0$).
\end{proposition}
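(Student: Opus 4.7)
The plan is to prove, in order, (i) bijectivity of the two restricted maps in the statement, (ii) holomorphy of the three functions $\gamma_+$, $\gamma_-$, $M_+$, and (iii) the identity \eqref{2.9}. The sole systematic input is the Green identity from Definition \ref{def2.2} (which, in the unequal-indices setting, contains the extra term $i(P_2\G_0\wh f,P_2\G_0\wh g)_{\cH_2}$), together with Proposition \ref{pr2.3}.

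For (i), dimensions already match by Proposition \ref{pr2.3}(1), so injectivity suffices. If $\wh f=\{f,\l f\}\in\wh\gN_\l(A)$ and $\G_0\wh f=0$, Green's identity with $\wh g=\wh f$ gives $2i(\im\l)\|f\|^2=i\|P_2\G_0\wh f\|^2=0$, forcing $f=0$ since $\im\l>0$. If $\wh f=\{f,zf\}\in\wh\gN_z(A)$ and $P_1\G_0\wh f=0$ (so $\G_0\wh f\in\cH_2$), then the cross terms $(\G_1\wh f,\G_0\wh f)_{\cH_0}$ vanish by the orthogonality $\cH_1\perp\cH_2$, and Green's identity yields $2(\im z)\|f\|^2=\|\G_0\wh f\|^2\ge 0$, forcing $f=0$ since $\im z<0$. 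Surjectivity in $\bC_+$ follows from $A^*=A_0\dotplus\wh\gN_\l(A)$ (valid because $\bC_+\subset\rho(A_0)$ by Proposition \ref{pr2.3}(3)) together with surjectivity of $\G_0$ (Proposition \ref{pr2.3}(2)). For $z\in\bC_-$, set $A_1:=\ker P_1\G_0\supset A_0$; the just-proved injectivity gives $A_1\cap\wh\gN_z(A)=\{0\}$, and matching $\dim A^*/A_1=\dim\cH_1=\dim\wh\gN_z(A)$ delivers $A^*=A_1\dotplus\wh\gN_z(A)$, whence surjectivity of $P_1\G_0\up\wh\gN_z(A)$ is immediate.

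For (ii), fix $\wh h=\{h,h'\}\in A^*$ with $\G_0\wh h=h_0$ and decompose $\wh h=\wh h_0+\wh f_\l$ along $A^*=A_0\dotplus\wh\gN_\l(A)$, with $\wh f_\l=\{f_\l,\l f_\l\}$. Then $\wh h_0=\{h-f_\l,\,h'-\l f_\l\}\in A_0$, so $(A_0-\l)(h-f_\l)=h'-\l h$ and
\[
\gamma_+(\l)h_0=f_\l=h-(A_0-\l)^{-1}(h'-\l h),\qquad \l\in\bC_+,
\]
which is holomorphic because $(A_0-\l)^{-1}$ is. Formula \eqref{2.8} and the boundedness of $\G_1:A^*\to\cH_1$ (graph norm) then give holomorphy of $M_+$. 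The same scheme handles $\gamma_-$ with $A_1$ in place of $A_0$: by (i), $\ker(A_1-z)=\{0\}$ for $z\in\bC_-$; and $\ran(A_1-z)=\gH$ because $\ran(A^*-z)=\gH$ for $z\in\bC\setminus\bR$ (a consequence of the von Neumann decomposition) while the $\wh\gN_z(A)$-summand of $A^*=A_1\dotplus\wh\gN_z(A)$ is annihilated by $A^*-z$. The closed graph theorem promotes $(A_1-z)^{-1}$ to a bounded operator on $\gH$, and a standard Neumann-series argument delivers its holomorphy on $\bC_-$.

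For (iii), apply the Green identity to $\wh f=\{\gamma_+(\mu)h_0,\mu\gamma_+(\mu)h_0\}\in\wh\gN_\mu(A)$ and $\wh g=\{\gamma_+(\l)k_0,\l\gamma_+(\l)k_0\}\in\wh\gN_\l(A)$ for $\mu,\l\in\bC_+$ and $h_0,k_0\in\cH_0$. The left-hand side collapses to $(\mu-\ov\l)(\gamma_+^*(\l)\gamma_+(\mu)h_0,k_0)_{\cH_0}$. On the right, using $M_+(\l)k_0\in\cH_1$ and $\cH_1\perp\cH_2$ in $\cH_0$ rewrites $(h_0,M_+(\l)k_0)_{\cH_0}=(M_+^*(\l)P_1 h_0,k_0)_{\cH_0}$, while $(P_2\G_0\wh f,P_2\G_0\wh g)_{\cH_2}=(P_2h_0,k_0)_{\cH_0}$. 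Equating the two sides for arbitrary $h_0,k_0\in\cH_0$ yields \eqref{2.9}. The main technical obstacle is step (ii) for $\gamma_-$: the $iP_2$-term in Green's identity destroys the symmetry of $A_1$ as soon as $n_+(A)>n_-(A)$, so one must verify $(A_1-z)^{-1}\in[\gH]$ and its holomorphy on $\bC_-$ directly, rather than inheriting it from a self-adjoint-extension theorem.
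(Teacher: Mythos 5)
Your argument is correct, and since the paper itself imports Proposition \ref{pr2.4} from \cite{Mog06.2} without proof, there is nothing in the source to compare it against except the standard route — which is exactly the one you follow: Green's identity for injectivity, the decompositions $A^*=A_0\dotplus\wh\gN_\l(A)$ and $A^*=A_1\dotplus\wh\gN_z(A)$ with $A_1=\ker P_1\G_0$ for surjectivity and for the resolvent formulas $\g_+(\l)h_0=h-(A_0-\l)^{-1}(h'-\l h)$, and a second application of Green's identity to elements of $\wh\gN_\mu(A)$ and $\wh\gN_\l(A)$ for \eqref{2.9}. The one step that deserves a flag is the codimension count delivering $A^*=A_1\dotplus\wh\gN_z(A)$: the implication ``trivial intersection plus $\dim A^*/A_1=\dim\wh\gN_z(A)$ implies the sum is all of $A^*$'' is valid only when $\dim\cH_1<\infty$; in the setting of this paper $\dim\cH_1\leq\dim\bH<\infty$, so no harm is done, but for the abstract statement as written one would need a dimension-free argument (e.g.\ establishing $\bC_-\subset\rho(A_1)$ directly) at that point rather than afterwards. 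Your closing observation — that $A_1$ is not symmetric when $\cH_2\neq\{0\}$, so $(A_1-z)^{-1}\in[\gH]$ must be verified by hand — correctly identifies where the unequal-indices setting departs from the classical ordinary-boundary-triplet proof.
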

It follows from \eqref{2.7} that
\begin {equation}\label{2.10}
\G_0\{\g_+(\l)h_0,\l \g_+(\l)h_0\}=h_0,  \qquad h_0\in\cH_0.
\end{equation}

\begin{definition}\label{def2.10}$\,$\cite{Mog06.2}
The operator functions $\g_\pm(\cd)$ and $M_+(\cd)$ defined in Proposition
\ref{pr2.4}  are called the $\g$-fields and the Weyl function, respectively,
corresponding to the boundary triplet $\Pi_+$.
\end{definition}
\begin{remark}\label{rem2.11}
 (1) If $\cH_0=\cH_1:=\cH$, then   the boundary triplet in the sense
of Definition \ref{def2.2} turns into the boundary triplet
$\Pi=\{\cH,\G_0,\G_1\}$ for $A^*$ in the sense of \cite{GorGor,Mal92}. In this
case $n_+(A)=n_-(A)(=\dim \cH)$, $A_0(=\ker \G_0)$ is a self-adjoint extension
of $A$ and the functions  $\g_\pm(\cd)$ and  $M_+(\cd)$ turn into the
$\g$-field $\g(\cd):\rho (A_0)\to [\cH,\gH]$ and  Weyl function $M(\cd):\rho
(A_0)\to [\cH]$ respectively introduced in \cite{DM91,Mal92}. Moreover, in this case $M(\cd)$ is a Nevanlinna operator function.

To avoid
misleading with using other definitions,  a boundary triplet $\Pi=\bt$ in the
sense of \cite{GorGor,Mal92} will be  called an \emph{ordinary boundary
triplet} for $A^*$.

(2) Along with $\Pi_+$ we define in \cite{Mog13.2}  a boundary triplet
$\Pi_-=\bta$ for $A^*$. Such a triplet is applicable to symmetric relations $A$
with $n_+(A)\leq n_-(A)$.
\end{remark}

\section{Decomposing boundary triplets for symmetric systems}
\subsection{Notations}
Let $\cI=[ a,b\rangle\; (-\infty < a< b\leq\infty)$ be an interval of the real
line (the symbol $\rangle$ means that the endpoint $b<\infty$  might be either
included  to $\cI$ or not). For a given finite-dimensional Hilbert space $\bH$
denote by $\AC$ the set of functions $f(\cd):\cI\to \bH$ which are absolutely
continuous on each segment $[a,\b]\subset \cI$ and let $AC (\cI):=AC(\cI;\bC)$.

Next assume that $\D(\cd)$ is an $[\bH]$-valued Borel measurable  functions on
$\cI$ integrable on each compact interval $[a,\b]\subset \cI$ and such that
$\D(t)\geq 0$. Denote  by $\lI$  the semi-Hilbert  space of  Borel measurable
functions $f(\cd): \cI\to \bH$ satisfying $||f||_\D^2:=\int\limits_{\cI}(\D
(t)f(t),f(t))_\bH \,dt<\infty$ (see e.g. \cite[Chapter 13.5]{DunSch}).  The
semi-definite inner product $(\cd,\cd)_\D$ in $\lI$ is defined by $
(f,g)_\D=\int\limits_{\cI}(\D (t)f(t),g(t))_\bH \,dt,\; f,g\in \lI$. Moreover,
let $\LI$ be the Hilbert space of the equivalence classes in $\lI$ with respect
to the semi-norm $||\cd||_\D$ and let $\pi$ be the quotient map from $\lI$ onto
$\LI$.

For a given finite-dimensional Hilbert space $\cK$ denote by $\lo{\cK}$ the set
of all Borel measurable  operator-functions $F(\cd): \cI\to [\cK,\bH]$ such
that $F(t)h\in \lI$ for each $h\in\cK$ (this condition is equivalent to
$\int\limits_{\cI}||\D^{\frac 1 2} (t)F(t)||^2\,dt <\infty$). Moreover, we let
$\cL_\D^2[\bH]:=\cL_\D^2[\bH,\bH]$.
\subsection{Symmetric systems}
In this subsection we provide some known results on symmetric systems of
differential equations following  \cite{GK, Kac03, LesMal03, Orc}.

Let $H$ and $\wh H$ be  finite-dimensional Hilbert spaces and let
\begin {equation}\label{3.0.1}
H_0=H\oplus\wh H, \quad \bH=H_0\oplus  H=H\oplus\wh H \oplus H.
\end{equation}
In the following we denote by $\cP_0,\; \wh\cP$ and  $\cP_1$ the orthoprojections
in $\bH$ onto the first, second and third component in the decomposition
$\bH=H\oplus\wh H \oplus H$ respectively.

Let as above $\cI=[ a,b\rangle\; (-\infty < a< b\leq\infty)$ be an interval in
$\bR$ . Moreover, let $B(\cd)$ and $\D(\cd)$ be $[\bH]$-valued Borel measurable
functions on $\cI$ integrable on each compact interval $[a,\b]\subset \cI$ and
satisfying $B(t)=B^*(t)$ and $\D(t)\geq 0$ a.e. on $\cI$ and let $J\in [\bH]$
be  operator \eqref{1.3}.

A first-order symmetric  system on an interval $\cI$ (with the regular endpoint
$a$) is a system of differential equations of the form
\begin {equation}\label{3.1}
J y'-B(t)y=\D(t) f(t), \quad t\in\cI,
\end{equation}
where $f(\cd)\in \lI$. Together   with \eqref{3.1} we consider also the
homogeneous system
\begin {equation}\label{3.2}
J y'(t)-B(t)y(t)=\l \D(t) y(t), \quad t\in\cI, \quad \l\in\bC.
\end{equation}
A function $y\in\AC$ is a solution of \eqref{3.1} (resp. \eqref{3.2}) if
equality \eqref{3.1} (resp. \eqref{3.2} holds a.e. on $\cI$. A function
$Y(\cd,\l):\cI\to [\cK,\bH]$ is an operator solution of  equation \eqref{3.2}
if $y(t)=Y(t,\l)h$ is a (vector) solution of this equation for every $h\in\cK$
(here $\cK$ is a Hilbert space with $\dim\cK<\infty$).

As is known there exists a unique $[\bH]$-valued operator solution
$Y_0(\cd,\l)$ of Eq. \eqref{3.2} satisfying $Y_0(a,\l)=I_{\bH}$. Moreover, each
operator solution $Y(\cd,\l)$ of Eq. \eqref{3.2} admits the representation
\begin {equation}\label{3.3}
Y(t,\l)=Y_0(t,\l)Y(a,\l), \qquad t\in\cI.
\end{equation}

In what follows  we always assume  that  system \eqref{3.1} is definite in the
sense of the following definition.
 \begin{definition}\label{def3.1}$\,$\cite{GK}
Symmetric system \eqref{3.1} is called definite if for each $\l\in\bC$ and each
solution $y$ of \eqref{3.2} the equality $\D(t)y(t)=0$ (a.e. on $\cI$) implies
$y(t)=0, \; t\in\cI$.
\end{definition}

As it is known \cite{Orc, Kac03, LesMal03} definite system \eqref{3.1} gives
rise to the \emph{maximal linear relations} $\tma$ and $\Tma$  in  $\lI$ and
$\LI$, respectively. They are given by
\begin {equation}\label{3.4}
\begin{array}{c}
\tma=\{\{y,f\}\in(\lI)^2 :y\in\AC \;\;\text{and}\;\; \qquad\qquad\qquad\qquad \\
\qquad\qquad\qquad\qquad\qquad\quad  J y'(t)-B(t)y(t)=\D(t) f(t)\;\;\text{a.e.
on}\;\; \cI \}
\end{array}
\end{equation}
and $\Tma=\{\{\pi y,\pi f\}:\{y,f\}\in\tma\}$. Moreover the Lagrange's identity
\begin {equation}\label{3.6}
(f,z)_\D-(y,g)_\D=[y,z]_b - (J y(a),z(a)),\quad \{y,f\}, \; \{z,g\} \in\tma.
\end{equation}
holds with
\begin {equation}\label{3.7}
[y,z]_b:=\lim_{t \uparrow b}(J y(t),z(t)), \quad y,z \in\dom\tma.
\end{equation}
Formula \eqref{3.7} defines the skew-Hermitian  bilinear form $[\cd,\cd]_b $ on
$\dom \tma$. By using this form one defines the \emph{minimal relations} $\tmi$
in $\lI$ and $\Tmi$ in $\LI$ via
\begin {equation*}
\tmi=\{\{y,f\}\in\tma: y(a)=0 \;\; \text{and}\;\; [y,z]_b=0\;\;\text{for
each}\;\; z\in \dom \tma \}.
\end{equation*}
and $\Tmi=\{\{\pi y,\pi f\}:\{y,f\}\in\tmi\} $. According to \cite{Orc,Kac03,
LesMal03} $\Tmi$ is a closed symmetric linear relation in $\LI$ and
$\Tmi^*=\Tma$.
\begin{remark}\label{rem3.1a}
It  is known (see e.g. \cite{LesMal03}) that the maximal relation $\Tma$
induced by the definite symmetric system \eqref{3.1} possesses the following
 property: for any $\{\wt y, \wt f \}\in \Tma $ there exists
a unique function $y\in \AC \cap \lI $ such that $y\in \wt y$ and $\{y,f\}\in
\tma$ for any $f\in\wt f$. Below we associate such a function $ y\in \AC \cap
\lI$ with each pair $\{\wt y, \wt f\}\in\Tma$.
\end{remark}

Denote by $\cN_\l,\; \l\in\bC,$ the linear space of solutions of the
homogeneous system \eqref{3.2} belonging to $\lI$. Definition \eqref{3.4} of
$\tma$ implies
\begin{equation*}
\cN_\l=\ker (\tma-\l)=\{y\in\lI:\; \{y,\l y\}\in\tma\}, \quad\l\in\bC,
\end{equation*}
and hence $\cN_\l\subset \dom\tma$. As usual, denote by $ n_\pm (\Tmi ):=\dim
\gN_\l (\Tmi), \quad \l\in\bC_\pm,$ the  deficiency indices of  $\Tmi$.  Since
the system \eqref{3.1} is definite, $ \pi\cN_\l=\gN_\l (\Tmi)$ and
$\ker(\pi\up\cN_\l)=\{0\},\;\; \l\in\bC$.  This implies that $n_\pm (\Tmi)=\dim
\cN_\l\leq n, \; \l\in\bC_\pm$.

The following lemma will be useful in the sequel.
\begin{lemma}\label{lem3.1b}$\,$\cite{AlbMalMog13}
For each operator solution $Y(\cd,\l)\in \lo{\cK}$ of Eq. \eqref{3.2} the
relation
\begin {equation} \label{3.10}
\cK\ni h\to (Y(\l) h)(t)=Y(t,\l)h \in\cN_\l.
\end {equation}
defines the linear mapping $Y(\l):\cK\to \cN_\l$. Moreover, if $F(\l):=\pi
Y(\l)(\in [\cK, \LI])$, then
\begin {equation}\label{3.11}
F^*(\l)\wt f=\int_\cI Y^*(t,\l)\D(t)f(t)\, dt,\qquad \wt f\in \LI,\quad f\in\wt
f.
\end{equation}
\end{lemma}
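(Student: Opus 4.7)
The statement has two parts: a well-definedness claim that $Y(\l)$ lands in $\cN_\l$, and a computation of the adjoint $F^*(\l)$. I would dispatch them separately.

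For the first part, the plan is simply to unpack the two hypotheses. Since $Y(\cd,\l)$ is an operator solution of \eqref{3.2}, each vector-valued function $t\mapsto Y(t,\l)h$ with $h\in\cK$ solves the homogeneous system by definition. Because in addition $Y(\cd,\l)\in \lo{\cK}$, this function also lies in $\lI$. Combining these with the description $\cN_\l=\ker(\tma-\l)=\{y\in\lI:\{y,\l y\}\in\tma\}$ recorded just after Remark \ref{rem3.1a}, we get $Y(\cd,\l)h\in\cN_\l$, and linearity in $h$ is immediate from the linearity of $Y(t,\l)$.

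For the second part, I would invoke the defining identity of the adjoint. For any $h\in\cK$ and any representative $f\in\wt f$,
\begin{equation*}
(F^*(\l)\wt f,h)_\cK = (\wt f,F(\l)h)_{\LI} = (f,Y(\cd,\l)h)_\D = \int_\cI(\D(t)f(t),Y(t,\l)h)_\bH\,dt,
\end{equation*}
and moving $Y(t,\l)$ across the inner product rewrites the integrand as $(Y^*(t,\l)\D(t)f(t),h)_\cK$. Pulling the constant $h$ outside the integral identifies $F^*(\l)\wt f$ with the $\cK$-valued integral on the right of \eqref{3.11}. The one technical point here is to check that this integral actually exists (as a Bochner integral in the finite-dimensional space $\cK$) and is independent of the choice of representative $f$. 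For existence I would factor $Y^*\D=(\D^{1/2}Y)^*\D^{1/2}$, apply the Cauchy--Schwarz inequality in $\bH$ pointwise, and then apply Cauchy--Schwarz in $\lI$, which yields a bound of the form $\|f\|_\D\cdot\bigl(\int_\cI\|\D^{1/2}(t)Y(t,\l)\|^2\,dt\bigr)^{1/2}$, finite by the very definition of the class $\lo{\cK}$ recalled at the end of Subsection 3.1. For independence of representative, two representatives of $\wt f$ differ by a function $g$ with $\D^{1/2}(t)g(t)=0$ a.e., hence $\D(t)g(t)=0$ a.e., so the integrand is unaltered on a set of full measure.

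The main ``obstacle'', if one wants to call it that, is purely bookkeeping: keeping the two inner-product spaces $\bH$ and $\cK$ straight during the shuffle $Y\leftrightarrow Y^*$, and keeping straight the distinction between the semi-Hilbert space $\lI$ and its quotient $\LI$ when passing between the pairings $(\wt f,\cd)_{\LI}$ and $(f,\cd)_\D$. There is no substantive analytic difficulty beyond the two Cauchy--Schwarz estimates above.
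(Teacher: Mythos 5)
The paper states this lemma without proof, importing it from \cite{AlbMalMog13}, so there is no in-paper argument to compare against; your proof is the natural one and is correct. Both halves check out: membership in $\cN_\l$ follows directly from the definitions of an operator solution, of the class $\lo{\cK}$, and of $\cN_\l$ as the $\lI$-solutions of \eqref{3.2}; and the adjoint computation, together with the factorization $Y^*\D=(\D^{1/2}Y)^*\D^{1/2}$ giving absolute convergence of the integral and the observation that $\|g\|_\D=0$ forces $\D(t)g(t)=0$ a.e.\ (hence independence of the representative $f\in\wt f$), is exactly what is needed.
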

\subsection{Decomposing boundary triplets}\label{sub3.3}
We start this subsection with the following lemma.
\begin{lemma}$\,$\cite{AlbMalMog13} \label{lem3.2}
If $n_-(\Tmi)\leq n_+(\Tmi)$, then there exist a finite dimensional  Hilbert
space $\wt \cH_b$, a subspace $\cH_b\subset \wt \cH_b$  and a surjective linear
mapping
\begin{gather}\label{3.28}
\G_b=\begin{pmatrix}\G_{0b}\cr  \wh\G_b \cr  \G_{1b}\end{pmatrix} :\dom\tma\to
\wt \cH_b\oplus\wh H \oplus \cH_b
\end{gather}
such that for all $y,z \in \dom\tma$ the following identity is valid
\begin {equation} \label{3.29}
 [y,z]_b=(\G_{0b}y,\G_{1b}z)_{\wt\cH_b}-(\G_{1b}y,\G_{0b}z)_{\wt\cH_b}
+i(P_{\cH_b^\perp}\G_{0b}y, P_{\cH_b^\perp}\G_{0b}z)_{\wt\cH_b}+ i (\wh\G_b y,
\wh\G_b z)_{\wh H}
\end{equation}
(here $\cH_b^\perp=\wt\cH_b\ominus \cH_b$). Moreover, in the case $n_+(\Tmi)= n_-(\Tmi)$ (and only in this case) one has $\wt \cH_b=\cH_b$ and the identity \eqref{3.29} takes the form
\begin {equation*}
[y,z]_b=(\G_{0b}y,\G_{1b}z)_{\cH_b}-(\G_{1b}y,\G_{0b}z)_{\cH_b}+  i (\wh\G_b y,
\wh\G_b z)_{\wh H}
\end{equation*}
\end{lemma}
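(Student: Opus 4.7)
My plan is to combine the Lagrange identity \eqref{3.6} with the abstract boundary-triplet construction of Proposition \ref{pr2.3}, using regularity at $a$ to explicitly extract the $a$-contribution to the boundary form and then define $\G_b$ as the residual at $b$. Since $n_-(\Tmi)\le n_+(\Tmi)$, Proposition \ref{pr2.3} provides a boundary triplet $\wt\Pi_+=\{\cH_0'\oplus\cH_1',\wt\G_0,\wt\G_1\}$ for $\Tma=\Tmi^*$ with $\dim\cH_0'=n_+(\Tmi)$ and $\dim\cH_1'=n_-(\Tmi)$, satisfying the Green identity of Definition \ref{def2.2}. By Remark \ref{rem3.1a} each element of $\dom\Tma$ has a canonical representative in $\dom\tma$, so these abstract maps $\wt\G_j$ pull back to surjective linear maps on $\dom\tma$.

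Next I would construct the explicit regular maps at $a$ and identify $\G_b$ as the complement. A direct computation using \eqref{1.3} gives
\begin{equation*}
-(Jy(a),z(a))=(\cP_1 y(a),\cP_0 z(a))_H-(\cP_0 y(a),\cP_1 z(a))_H-i(\wh\cP y(a),\wh\cP z(a))_{\wh H},
\end{equation*}
which is a skew-Hermitian ``regular'' Green form at $a$ with boundary values $\cP_1 y(a)\in H$, $\cP_0 y(a)\in H$, $\wh\cP y(a)\in\wh H$. Subtracting this expression from the abstract Green identity for $\wt\Pi_+$ and invoking Lagrange's identity \eqref{3.6} produces an expression for $[y,z]_b$ as a skew-Hermitian form in those components of $\wt\G_0\wh y,\wt\G_1\wh y$ that are complementary to $y(a)$ (here $\wh y=\{\pi y,\pi f\}\in\Tma$). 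These complementary components are taken as the definition of $\G_{0b}y\in\wt\cH_b$, $\wh\G_b y\in\wh H$, $\G_{1b}y\in\cH_b$, with $\dim\wt\cH_b=n_+(\Tmi)-\dim H$ and $\dim\cH_b=n_-(\Tmi)-\dim H$; the embedding $\cH_b\hookrightarrow\wt\cH_b$ accounts for the defect-index asymmetry, and the extra imaginary term $i(P_{\cH_b^\perp}\G_{0b}y,P_{\cH_b^\perp}\G_{0b}z)$ in \eqref{3.29} is inherited from the imaginary piece $i(P_2'\wt\G_0\wh y,P_2'\wt\G_0\wh z)$ of the abstract Green identity once the regular $a$-part has been peeled off.

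Surjectivity of $\G_b$ follows from that of the full map $\wt\G=(\wt\G_0,\wt\G_1)$: a cutoff argument together with the solvability of \eqref{3.2} with prescribed initial data shows that the $a$- and $b$-boundary data of $y\in\dom\tma$ are independent, so any target value in $\wt\cH_b\oplus\wh H\oplus\cH_b$ is achievable by a suitable $y$. In the equal-defect case $n_+(\Tmi)=n_-(\Tmi)$ the triplet $\wt\Pi_+$ is ordinary (Remark \ref{rem2.11}), so $\cH_1'=\cH_0'$, the imaginary term in its Green identity vanishes, and the dimension count gives $\wt\cH_b=\cH_b$; the converse ``only in this case'' is immediate from $\dim\wt\cH_b-\dim\cH_b=n_+(\Tmi)-n_-(\Tmi)$. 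The main obstacle is the bookkeeping in paragraph two: the imaginary contribution at $a$, namely $-i(\wh\cP y(a),\wh\cP z(a))_{\wh H}$, and the imaginary contribution $+i(P_2'\wt\G_0\wh y,P_2'\wt\G_0\wh z)$ from the abstract Green identity must combine into a single imaginary form at $b$ that decomposes as the $\wh H$-term plus the $P_{\cH_b^\perp}$-term in \eqref{3.29}. This requires choosing the abstract triplet (or an appropriate change of basis in $\cH_0'$) so that $\wh H$ embeds orthogonally to $\cH_b$ inside $\wt\cH_b$ and the $a$-imaginary contribution is exactly cancelled by the corresponding $a$-piece of $P_2'\wt\G_0$.
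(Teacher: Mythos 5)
The paper offers no proof of this lemma --- it is quoted from \cite{AlbMalMog13} --- so your proposal is an independent attempt, and it has a genuine gap at exactly the point you flag as ``the main obstacle''. That obstacle is not bookkeeping; it is the substance of the lemma. Starting from an arbitrary abstract boundary triplet $\wt\Pi_+$ for $\Tma$ supplied by the general theory, there is no reason that the map $\wh y\mapsto\{\wt\G_0\wh y,\wt\G_1\wh y\}$ contains the regular boundary value $y(a)$ as a sub-block, nor that the ``complementary components'' depend only on the behaviour of $y$ near $b$ (note also that $\wt\G_j$ acts on the graph $\Tma$, so a priori the residual depends on the pair $\{\wt y,\wt f\}$ and not only on the function $y\in\dom\tma$ on which $[\cd,\cd]_b$ is defined). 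Producing a triplet for which the $a$-data splits off, the $b$-residual annihilates functions vanishing near $b$, and the two imaginary terms recombine as in \eqref{3.29} is essentially equivalent to constructing the decomposing boundary triplet of Proposition \ref{pr3.3} --- which in the paper's logical order is \emph{derived from} Lemma \ref{lem3.2}, not the other way round. The route in \cite{AlbMalMog13} avoids this circularity by working directly with the form $[\cd,\cd]_b$ on $\dom\tma$: a von Neumann--type decomposition $\dom\tma=\dom\tmi\dotplus\cN_\l\dotplus\cN_{\ov\l}$ is used to compute the rank and the inertia indices of the Hermitian form $\tfrac1i[y,y]_b$ in terms of $n_\pm(\Tmi)$ and $\dim H$, after which a normal form for finite-rank indefinite Hermitian forms yields $\G_b$ with the stated block structure; surjectivity and the degenerate directions are then handled by the kind of cutoff argument you invoke.

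A concrete secondary error: your dimension count $\dim\wt\cH_b=n_+(\Tmi)-\dim H$ and $\dim\cH_b=n_-(\Tmi)-\dim H$ is inconsistent with Proposition \ref{pr3.3}, where $\cH_0=H\oplus\wh H\oplus\wt\cH_b$ must have dimension $n_+(\Tmi)$ by Proposition \ref{pr2.3}(1); the correct values are $\dim\wt\cH_b=n_+(\Tmi)-\dim H-\dim\wh H$ and $\dim\cH_b=n_-(\Tmi)-\dim H-\dim\wh H$. (Check against the regular-endpoint case $n_\pm(\Tmi)=\dim\bH=2\dim H+\dim\wh H$, where $\G_b$ should carry exactly the data of $y(b)$, i.e. $\dim\wt\cH_b=\dim\cH_b=\dim H$.) The difference $\dim\wt\cH_b-\dim\cH_b=n_+(\Tmi)-n_-(\Tmi)$ is unaffected, so your ``only in this case'' remark survives, but the count matters for the signature computation that any correct proof must carry out.
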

The following proposition is immediate from \cite[Proposition
3.6]{AlbMalMog13}.
\begin{proposition}\label{pr3.3}
Assume that $n_-(\Tmi)\leq n_+(\Tmi)$. Moreover, let
\begin {equation}\label{3.32}
y(t)=\{y_0(t),\,\wh y(t), \, y_1(t) \}(\in \underbrace{H\oplus\wh H \oplus
H}_{\bH}), \quad t\in\cI,
\end{equation}
be the representation of a function $y\in\dom\tma$ in accordance with the
decomposition \eqref{3.0.1} of $\bH$ and let $\G_b$ be the surjective linear
mapping \eqref{3.28} satisfying the identity \eqref{3.29}. Assume also that
$\cH_0$ and $\cH_1(\subset \cH_0)$ are finite dimensional Hilbert spaces
defined by
\begin {equation}\label{3.33}
\cH_0=\underbrace{H\oplus\wh H}_{H_0}\oplus\wt\cH_b=H_0 \oplus\wt\cH_b, \qquad
\cH_1=\underbrace{H\oplus\wh H}_{H_0}\oplus\cH_b=H_0\oplus \cH_b
\end{equation}
 and  $\G_j:\Tma\to\cH_j, \; j\in
\{0,1\},$ are  the operators given by
\begin {gather}
\G_0\{\wt y, \wt f\}=\{ - y_1(a),\;i(\wh y(a)-\wh\G_b y),\;
\G_{0b}y\}(\in  H\oplus\wh H\oplus\wt\cH_b),\qquad\qquad\qquad\quad\label{3.34}\\
\G_1\{\wt y, \wt f\}=\{y_0(a),\; \tfrac 1 2(\wh y(a)+\wh\G_b)y, \;
-\G_{1b}y\}(\in H\oplus\wh H\oplus \cH_b), \quad \{\wt y, \wt
f\}\in\Tma.\label{3.35}
\end{gather}
(here $y_0(a), \wh y(a)$ and $y_1(a)$ are taken from the representation
\eqref{3.32} of a function  $y\in\dom\tma$, which  corresponds to $\{\wt y, \wt
f\}\in\Tma$ according to Remark \ref{rem3.1a}). Then the collection
$\Pi_+=\bta$ is a boundary triplet for $\Tma$ and the (maximal symmetric)
relation $A_0(=\ker \G_0)$ is
\begin {equation}\label{3.36}
A_0=\{\{\wt y,\wt f\}\in\Tma: y_1(a)=0,\; \wh y(a)=\wh \G_b y, \; \G_{0b}y=0\}.
\end{equation}

If in addition $n_+(\Tmi)=n_-(\Tmi)$, then $\Pi_+$ turns into an ordinary
boundary triplet $\Pi=\bt$ for $\Tma$, with $\cH=H_0\oplus\cH_b$  and the
mappings  $\G_0, \G_1: \Tma\to\cH,$  given by \eqref{3.34} and \eqref{3.35}
with $\wt\cH_b=\cH_b$. Moreover, in this case $A_0=A_0^*$.
\end{proposition}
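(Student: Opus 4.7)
The plan is to verify the two defining properties of a boundary triplet from Definition \ref{def2.2}: Green's identity and surjectivity of $\G:=(\G_0,\G_1):\Tma\to\cH_0\oplus\cH_1$. Observe first that by \eqref{3.33} one has $\cH_2:=\cH_0\ominus\cH_1=\wt\cH_b\ominus\cH_b=\cH_b^\perp$, and by \eqref{3.34}, $P_2\G_0\{\wt y,\wt f\}=P_{\cH_b^\perp}\G_{0b}y$; this identifies the correction term of Definition \ref{def2.2} with the last summand of \eqref{3.29}.

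For the Green's identity I would combine Lagrange's identity \eqref{3.6}, the expansion \eqref{3.29} of $[y,z]_b$, and the elementary formula
\begin{equation*}
(Jy(a),z(a))=(y_0(a),z_1(a))_H-(y_1(a),z_0(a))_H+i(\wh y(a),\wh z(a))_{\wh H},
\end{equation*}
obtained from \eqref{1.3} and \eqref{3.32}. On the other side, the expression $(\G_1\wh f,\G_0\wh g)_{\cH_0}-(\G_0\wh f,\G_1\wh g)_{\cH_0}$ is computed directly from \eqref{3.34}, \eqref{3.35}: its $H$-block contributes $-(y_0(a),z_1(a))_H+(y_1(a),z_0(a))_H$, its $\wt\cH_b$-block contributes $(\G_{0b}y,\G_{1b}z)_{\wt\cH_b}-(\G_{1b}y,\G_{0b}z)_{\wt\cH_b}$, and its $\wh H$-block, after expanding the cross-pairings of $\tfrac12(\wh y(a)+\wh\G_by)$ with $i(\wh z(a)-\wh\G_bz)$ and of $i(\wh y(a)-\wh\G_by)$ with $\tfrac12(\wh z(a)+\wh\G_bz)$, collapses to $-i(\wh y(a),\wh z(a))_{\wh H}+i(\wh\G_by,\wh\G_bz)_{\wh H}$. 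Adding the $\cH_2$-correction term and comparing with Lagrange's identity, every block matches on the nose.

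For surjectivity of $\G$, note that $\G\{\wt y,\wt f\}$ depends only on the six boundary data $y_0(a),\wh y(a),y_1(a),\G_{0b}y,\wh\G_by,\G_{1b}y$, and that on each of the blocks $(H,H)$, $(\wh H,\wh H)$ and $(\wt\cH_b,\cH_b)$ the pair $(\G_0,\G_1)$ acts on the corresponding boundary data by an evidently invertible linear map (e.g.\ on the $\wh H$-block the map $(\wh y(a),\wh\G_by)\mapsto (i(\wh y(a)-\wh\G_by),\tfrac12(\wh y(a)+\wh\G_by))$ is bijective). It therefore suffices to show that the combined map $\dom\tma\ni y\mapsto(y(a),\G_by)\in\bH\oplus\wt\cH_b\oplus\wh H\oplus\cH_b$ is surjective, and this reduces to two ingredients: the surjectivity of $\G_b$ granted by Lemma \ref{lem3.2}, and the standard initial-value theory for the definite symmetric system \eqref{3.2}, which furnishes elements of $\ker\G_b\cap\dom\tma$ with arbitrarily prescribed value at $a$ (built by solving \eqref{3.2} near $a$ with the desired initial data and matching to a correction vanishing near $b$).

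Finally, the description \eqref{3.36} of $A_0=\ker\G_0$ is immediate from the three components of \eqref{3.34}. In the case $n_+(\Tmi)=n_-(\Tmi)$, Lemma \ref{lem3.2} yields $\wt\cH_b=\cH_b$, so $\cH_0=\cH_1=:\cH$, $\cH_2=\{0\}$, the $P_2$-correction disappears, and $\Pi_+$ reduces to an ordinary boundary triplet in the sense of Remark \ref{rem2.11}(1), for which $A_0$ is automatically self-adjoint. The main obstacle I anticipate is the delicate bookkeeping of signs and factors of $i$ in the $\wh H$-block of the Green's identity; the surjectivity, though it rests on the nontrivial Lemma \ref{lem3.2}, is otherwise a routine consequence of that lemma together with the initial-value theory for \eqref{3.2}.
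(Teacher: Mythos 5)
Your proposal is correct, and it is worth noting that the paper itself gives no proof of this proposition: it declares it ``immediate from [AlbMalMog13, Proposition 3.6]''. What you have written is essentially the direct verification that the cited reference carries out, namely checking the two defining properties of Definition \ref{def2.2}. Your block-by-block computation of the Green's identity is accurate (in particular the $\wh H$-block really does collapse to $-i(\wh y(a),\wh z(a))+i(\wh\G_b y,\wh\G_b z)$, exactly cancelling against the $i(\wh y(a),\wh z(a))$ coming from $(Jy(a),z(a))$ and the $i(\wh\G_b y,\wh\G_b z)$ term of \eqref{3.29}), and the reduction of surjectivity of $(\G_0,\G_1)$ to surjectivity of $y\mapsto(y(a),\G_b y)$ via blockwise invertible changes of coordinates is the right observation. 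The one place where you are a little glib is the claim that elements of $\ker\G_b\cap\dom\tma$ with arbitrary prescribed value at $a$ are ``built by solving \eqref{3.2} near $a$ \ldots and matching to a correction vanishing near $b$'': a naive cutoff $\chi y$ of a solution produces $J(\chi y)'-B\chi y=\chi\D f+\chi' Jy$, and $\chi'Jy$ need not lie in $\ran\D(t)$, so $\chi y$ need not belong to $\dom\tma$. This is precisely where definiteness is used, via the standard patching lemma for definite systems (see \cite{Orc,LesMal03}), which asserts that for every $h\in\bH$ there is $y\in\dom\tma$ with $y(a)=h$ and $y\equiv 0$ near $b$; since such $y$ satisfy $[y,\cdot]_b\equiv 0$ and hence $\G_b y=0$ by the nondegeneracy built into \eqref{3.29}, your decomposition argument then closes. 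With that one citation made explicit, your argument is a complete and self-contained substitute for the external reference the paper relies on.
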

\begin{definition}\label{def3.4}
The boundary triplet $\Pi_+=\bta$  constructed in Proposition \ref{pr3.3} is
called a decomposing  boundary triplet for $\Tma$.
\end{definition}
In the sequel we suppose (unless otherwise stated) that the following
assumptions are fulfilled:
\begin{itemize}
\item[(A1)]
The  system \eqref{3.1} satisfies $n_-(\Tmi)\leq n_+(\Tmi)$:
\item[(A2)]
 $\wt\cH_b$ and $\cH_b(\subset \wt\cH_b)$ are finite dimensional
Hilbert spaces and $\G_b$ is the surjective linear mapping \eqref{3.28} such
that \eqref{3.29} holds.
\end{itemize}

The following two propositions directly follows from \cite[Propositions 4.4 and
4.5]{AlbMalMog13}.
\begin{proposition}\label{pr3.5}
{\rm (1)}  For every $\l\in\CR$ there exists a unique  operator solution
$v_0(\cd,\l)\in\lo{H_0}$ of Eq. \eqref{3.2} such that
\begin{gather}
\cP_1 v_0(a,\l)=-P_{H_0,H}, \qquad i (\wh \cP v_0(a,\l)-\wh\G_b
v_0(\l))=P_{H_0,\wh H},
\;\;\;\; \l\in\CR \label{3.37}\\
\G_{0b} v_0(\l)=0, \;\;\;\;\l\in\bC_+;\qquad P_{\cH_b}\G_{0b} v_0(\l)=0,
\;\;\;\;\l\in\bC_-.\nonumber
\end{gather}

{\rm (2)} For every $\l\in\bC_+ \; (\l\in\bC_-)$ there exists a unique operator
solution $u_+(\cd,\l)\in\lo{\wt\cH_b}$ (resp. $u_-(\cd,\l)\in\lo{\cH_b}$) of
Eq. \eqref{3.2} such that
\begin{gather}
\cP_1 u_\pm(a,\l)=0, \qquad i(\wh\cP u_\pm(a,\l)-\wh\G_bu_\pm(\l))=0,\quad
\l\in\bC_\pm,\label{3.38}\\
\G_{0b}u_+(\l)=I_{\wt\cH_b},\;\; \l\in\bC_+; \qquad
P_{\cH_b}\G_{0b}u_-(\l)=I_{\cH_b},\;\; \l\in\bC_-.\nonumber
\end{gather}
Here $v_0(\l)$ and $u_\pm(\l)$ denote  linear mappings from Lemma \ref{lem3.1b}
for the solutions $v_0(\cd,\l)$ and $u_{\pm}(\cd,\l)$, respectively.
\end{proposition}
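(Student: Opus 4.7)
The plan is to deduce both existence and uniqueness of $v_0(\cd,\l)$ and $u_\pm(\cd,\l)$ as direct consequences of the fact, guaranteed by Proposition \ref{pr2.4}, that the boundary mapping $\G_0$ (respectively $P_1\G_0$) from the decomposing boundary triplet $\Pi_+$ of Proposition \ref{pr3.3} restricts to a bijection of $\wh\gN_\l(\Tmi)$ onto $\cH_0$ for $\l\in\bC_+$ (respectively onto $\cH_1$ for $\l\in\bC_-$).

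First I would translate each block of boundary conditions in \eqref{3.37}--\eqref{3.38} into a single vector equation of the form $\G_0\{\pi y,\l\pi y\}=h^*$ (or with $P_1\G_0$ in place of $\G_0$), where $h^*$ is a prescribed vector in $\cH_0$ (resp.\ in $\cH_1$). Using the explicit formula \eqref{3.34} for $\G_0$, the decomposition $\cH_0=H_0\oplus\wt\cH_b$, and the identity $P_{H_0,H}h+P_{H_0,\wh H}h=h$ valid for $h\in H_0$, the conditions \eqref{3.37} on $v_0(\cd,\l)$ with $\l\in\bC_+$ take the form
\[
\G_0\{\pi v_0(\l)h,\,\l\pi v_0(\l)h\}=I_{H_0,\cH_0}h,\qquad h\in H_0,
\]
with $I_{H_0,\cH_0}$ the natural embedding of $H_0$ into $\cH_0$. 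For $\l\in\bC_-$ the same conditions read $P_1\G_0\{\pi v_0(\l)h,\,\l\pi v_0(\l)h\}=I_{H_0,\cH_1}h$. Analogously, the conditions \eqref{3.38} for $u_+(\cd,\l)$ reduce to $\G_0\{\pi u_+(\l)h,\,\l\pi u_+(\l)h\}=I_{\wt\cH_b,\cH_0}h$ ($h\in\wt\cH_b$), and those for $u_-(\cd,\l)$ to $P_1\G_0\{\pi u_-(\l)h,\,\l\pi u_-(\l)h\}=I_{\cH_b,\cH_1}h$ ($h\in\cH_b$).

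Second, for each fixed $h$ in the appropriate Hilbert space, Proposition \ref{pr2.4} produces a unique element $\wh f_h\in\wh\gN_\l(\Tmi)$ whose image under $\G_0$ (or $P_1\G_0$) is the prescribed vector. Writing $\wh f_h=\{\wt y_h,\l\wt y_h\}\in\Tma$ and invoking Remark \ref{rem3.1a} to single out the canonical representative $y_h\in \AC\cap\lI$ with $\{y_h,\l y_h\}\in\tma$, we obtain $y_h\in\cN_\l$. Linearity of $\G_0$ together with the uniqueness built into Proposition \ref{pr2.4} and Remark \ref{rem3.1a} forces the assignment $h\mapsto y_h(t)$ to be linear in $h$ for every fixed $t\in\cI$. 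Setting $v_0(t,\l)h:=y_h(t)$ (and analogously $u_\pm(t,\l)h$), and appealing to Lemma \ref{lem3.1b}, we get the required operator solutions in the classes $\lo{H_0}$, $\lo{\wt\cH_b}$, $\lo{\cH_b}$; they satisfy \eqref{3.37}--\eqref{3.38} by construction, and uniqueness of the whole operator solution is inherited from uniqueness of each $y_h$.

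The main -- though modest -- obstacle is the bookkeeping in the first step: one must verify that the orthoprojections $P_{H_0,H}$, $P_{H_0,\wh H}$ and the identity operators $I_{\wt\cH_b}$, $I_{\cH_b}$ appearing in \eqref{3.37}--\eqref{3.38} really do reassemble, through \eqref{3.34} and the projection $P_1:\cH_0\to\cH_1$, into the embeddings $I_{H_0,\cH_j}$ and $I_{\wt\cH_b,\cH_0}$, $I_{\cH_b,\cH_1}$ needed to apply Proposition \ref{pr2.4}. Once this identification is in place, the assertion is immediate.
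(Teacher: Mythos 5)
Your proposal is correct, and it follows the same route the paper relies on: the paper defers the proof to \cite[Propositions 4.4, 4.5]{AlbMalMog13}, but the underlying argument there is exactly your reduction of \eqref{3.37}, \eqref{3.38} to the equations $\G_0\{\pi y,\l\pi y\}=I_{H_0,\cH_0}h$, $I_{\wt\cH_b,\cH_0}h$ (resp.\ $P_1\G_0\{\pi y,\l\pi y\}=I_{H_0,\cH_1}h$, $I_{\cH_b,\cH_1}h$) combined with the isomorphism property of $\G_0\up\wh\gN_\l$ from Proposition \ref{pr2.4} and definiteness to pass from $\LI$-classes back to pointwise solutions, as confirmed by the identification $\g_\pm(\l)=\pi Z_\pm(\l)$ in Proposition \ref{pr3.6}.
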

\begin{proposition}\label{pr3.6}
 Let $v_0(\cd,\l)$ and $u_\pm(\cd,\l)$ be
the operator solutions from Proposition \ref{pr3.5}, let $Z_+(\cd,\l)\in
\lo{\cH_0}$ and $Z_-(\cd,\l)\in \lo{\cH_1}$ be the operator solutions of Eq.
\eqref{3.2} given by
\begin{gather}
Z_+(t,\l)=(v_0(t,\l),\, u_+(t,\l)):H_0\oplus\wt\cH_b\to \bH, \quad
\l\in\bC_+\label{3.40}\\
Z_-(t,\l)=(v_0(t,\l),\, u_-(t,\l)):H_0\oplus\cH_b\to \bH, \quad
\l\in\bC_-\label{3.41}.
\end{gather}
and let $Z_\pm(\l)$ be the linear mappings from Lemma \ref{lem3.1b} for the
solutions $Z_\pm(\cd,\l)$. Moreover, let $\Pi_+=\bta$ be a decomposing boundary
triplet \eqref{3.33}--\eqref{3.35} for $\Tma$. Then the $\g$-fields
$\g_\pm(\cd)$ of the triplet $\Pi_+$ are
\begin {equation}\label{3.42}
\g_+(\l)=\pi Z_+(\l),\quad \l\in\bC_+;\qquad \g_-(\l)=\pi Z_-(\l),\quad
\l\in\bC_-
\end{equation}
and the corresponding Weyl function $M_+(\cd)$ admits the block-matrix
representation
\begin{gather}\label{3.43}
M_+(\l)=\begin{pmatrix} m_0(\l )& M_{2+}(\l) \cr M_{3+}(\l) & M_{4+}(\l)
\end{pmatrix}: \underbrace{H_0\oplus\wt\cH_b}_{\cH_0}\to \underbrace{H_0
\oplus\cH_b}_{\cH_1},\;\;\;\l\in\bC_+
\end{gather}
 with the entries defined by
\begin{gather}
m_0(\l)=(\cP_0 +\wh \cP)v_0(a,\l)+\tfrac i 2 P_{\wh H}, \;\;\;\;\; M_{2+}(\l)
=(\cP_0+\wh\cP)u_+(a,\l)\label{3.45}\\
M_{3+}(\l)=-\G_{1b}v_0(\l), \qquad M_{4+}(\l)=-\G_{1b}u_+(\l).\label{3.47}
\end{gather}
Moreover, for each $\l\in\bC_-$ the following equalities hold
\begin {equation}\label{3.49}
m_0^*(\ov\l)=(\cP_0 +\wh\cP)v_0(a,\l)+\tfrac i 2 P_{\wh H}, \quad
M_{3+}^*(\ov\l)=(\cP_0 +\wh\cP)u_-(a,\l).
\end{equation}
In \eqref{3.45} and \eqref{3.49} $P_{\wh H}\in [H_0]$ is the orthoprojection in
$H_0$ onto $\wh H$.
\end{proposition}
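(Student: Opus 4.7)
My plan is to verify each assertion in Proposition \ref{pr3.6} by direct unwinding of definitions, using the prescribed boundary data for $v_0(\cd,\l)$ and $u_\pm(\cd,\l)$ from Proposition \ref{pr3.5} together with the explicit boundary mappings \eqref{3.34}--\eqref{3.35}.

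For the $\g$-field $\g_+$, I fix $\l\in\bC_+$ and $h_0=(h_0',h_b)\in H_0\oplus\wt\cH_b=\cH_0$ and set $y(t):=Z_+(t,\l)h_0=v_0(t,\l)h_0'+u_+(t,\l)h_b$. By Lemma \ref{lem3.1b} the pair $\wh y:=\{\pi Z_+(\l)h_0,\l\pi Z_+(\l)h_0\}$ lies in $\wh\gN_\l(\Tmi)\subset\Tma$. Applying $\G_0$ componentwise and invoking \eqref{3.37}--\eqref{3.38}, I read off $-y_1(a)=P_{H_0,H}h_0'$, $i(\wh y(a)-\wh\G_b y)=P_{H_0,\wh H}h_0'$, and $\G_{0b}y=h_b$; hence $\G_0\wh y=h_0$, so $\g_+(\l)=\pi_1(\G_0\up\wh\gN_\l)^{-1}$ coincides with $\pi Z_+(\l)$. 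The formula for $\g_-(\l)$ follows by the same computation with $u_+$ replaced by $u_-$, noting that $P_1:\cH_0\to\cH_1$ is the identity on $H_0$ and equals $P_{\wt\cH_b,\cH_b}$ on $\wt\cH_b$, and that $P_{\cH_b}\G_{0b}u_-(\l)=I_{\cH_b}$ supplies the $\cH_b$-component $h_b'$.

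For the block matrix \eqref{3.43}, I apply \eqref{3.35} to the same $\wh y$. The $\cH_b$-block is immediately $-\G_{1b}v_0(\l)h_0'-\G_{1b}u_+(\l)h_b=M_{3+}(\l)h_0'+M_{4+}(\l)h_b$ by \eqref{3.47}. For the $H_0$-block, the $H$-component is $\cP_0 v_0(a,\l)h_0'+\cP_0 u_+(a,\l)h_b$; using $\wh\G_b v_0(\l)=\wh\cP v_0(a,\l)+iP_{H_0,\wh H}$ and $\wh\G_b u_+(\l)=\wh\cP u_+(a,\l)$ from \eqref{3.37}--\eqref{3.38}, the $\wh H$-component $\tfrac 12(\wh y(a)+\wh\G_b y)$ equals $\wh\cP v_0(a,\l)h_0'+\tfrac i2 P_{H_0,\wh H}h_0'+\wh\cP u_+(a,\l)h_b$. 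Recombining the $H$- and $\wh H$-components under $\cP_0+\wh\cP$ identifies the $H_0$-part of $\G_1\wh y$ with $m_0(\l)h_0'+M_{2+}(\l)h_b$ via \eqref{3.45}, giving \eqref{3.43}.

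The adjoint identities \eqref{3.49} are obtained from the Lagrange identity \eqref{3.6} applied to $\{y,\bar\l y\},\{z,\l z\}\in\tma$ with $y=v_0(\cd,\bar\l)h_0'$, $\bar\l\in\bC_+$, and $z$ equal to $v_0(\cd,\l)k_0'$ or $u_-(\cd,\l)k_b'$, $\l\in\bC_-$; the scalar terms cancel and so $[y,z]_b=(Jy(a),z(a))$. Combining \eqref{3.29} with $\G_{0b}v_0(\bar\l)=0$, $P_{\cH_b}\G_{0b}v_0(\l)=0$, $P_{\cH_b}\G_{0b}u_-(\l)=I_{\cH_b}$, and the fact that $\G_{1b}$ takes values in $\cH_b$, I reduce $[y,z]_b$ to $i(\wh\G_b y,\wh\G_b z)_{\wh H}$ in both cases (all $\G_{0b}$--$\G_{1b}$ cross-terms vanish by the orthogonality of $\cH_b$ and $\cH_b^\perp$). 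Expanding $(Jy(a),z(a))$ via \eqref{1.3} and $\wh\G_b$ via \eqref{3.37}, then matching sesquilinear forms in the free arguments, yields the desired adjoint identities, which after absorbing the $\tfrac i2 P_{\wh H}$ correction from \eqref{3.45} become \eqref{3.49}. The main obstacle will be keeping the $\pm\tfrac i2 P_{\wh H}$ bookkeeping straight while expanding the inner products produced by the $iP_{H_0,\wh H}$ terms from \eqref{3.37}, as only a careful accounting of these half-factors yields the correct adjoint formula for $m_0$.
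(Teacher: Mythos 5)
Your proof is correct. Note that the paper itself gives no argument for Proposition \ref{pr3.6}: it is stated as an immediate consequence of \cite[Propositions 4.4 and 4.5]{AlbMalMog13}, so there is no in-paper proof to compare against; what you have written is a self-contained verification of the kind presumably carried out in the cited reference. Your computation of $\G_0\{\wt y,\l\wt y\}=h_0$ and $P_1\G_0\{\wt y,\l\wt y\}=h_1$ from \eqref{3.34} and the normalizations \eqref{3.37}, \eqref{3.38} correctly identifies $\g_\pm(\l)$ with $\pi Z_\pm(\l)$ via \eqref{2.7}, and the evaluation of $\G_1$ on the same pair, using $\wh\G_b v_0(\l)=\wh\cP v_0(a,\l)+iP_{H_0,\wh H}$ and $\wh\G_b u_+(\l)=\wh\cP u_+(a,\l)$, yields exactly the blocks \eqref{3.45}, \eqref{3.47}. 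For \eqref{3.49} your route through the Lagrange identity \eqref{3.6} with $\mu=\ov\l$ is sound: the left-hand side vanishes, the identity \eqref{3.29} collapses to $i(\wh\G_b y,\wh\G_b z)_{\wh H}$ plus the single surviving term $-(\G_{1b}y,P_{\cH_b}\G_{0b}z)$ (which is $0$ for $z=v_0(\cd,\l)k$ and $-(\G_{1b}v_0(\ov\l)h,k)$ for $z=u_-(\cd,\l)k$, precisely because $\G_{1b}$ ranges in $\cH_b$ while $\G_{0b}v_0(\l)\in\cH_b^\perp$), and expanding $(Jy(a),z(a))$ and cancelling the common term $i(\wh\cP v_0(a,\ov\l)h,\wh\cP z(a))$ produces $\bigl((\cP_0+\wh\cP)v_0(a,\ov\l)+iP_{\wh H}\bigr)^*=(\cP_0+\wh\cP)v_0(a,\l)$ and $M_{3+}^*(\ov\l)=(\cP_0+\wh\cP)u_-(a,\l)$; the first of these gives the stated formula for $m_0^*(\ov\l)$ after the $\tfrac i2 P_{\wh H}$ adjustment, exactly as you indicate. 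An alternative for the first identity in \eqref{3.49}, slightly shorter than redoing the Lagrange computation, is to restrict the already-established identity \eqref{2.9} to $H_0$ and compare with the explicit form of $Z_\pm(a,\l)$; but your direct argument is complete as it stands.
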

\begin{corollary}\label{cor3.7}$\,$\cite{AlbMalMog13}
Let $n_+(\Tmi)=n_-(\Tmi)$ and let $A_0=A_0^*$ be the  extension \eqref{3.36} of
$\Tmi$. Then for every $\l\in\rho (A_0)$ there exists a unique pair of
operator-valued solutions $v_0(\cd,\l)\in\lo{H_0}$ and $u(\cd,\l)\in\lo{\cH_b}$
of Eq. \eqref{3.2} satisfying
\begin{gather*}
\cP_1 v_0(a,\l)=-P_{H_0,H}, \quad i(\wh\cP v_0(a,\l)-\wh\G_b
v_0(\l))=P_{H_0,\wh H},\quad \G_{0b}
v_0(\l)=0, \;\; \l\in\rho (A_0),  \\
\cP_1 u(a,\l)=0, \quad i(\wh\cP u(a,\l)-\wh\G_b u(\l))=0, \quad
\G_{0b}u(\l)=I_{\cH_b},\;\;\l\in\rho (A_0).
\end{gather*}
Assume,  in addition, that   $\Pi=\bt$ is an ordinary decomposing boundary
triplet \eqref{3.33}--\eqref{3.35} for $\Tma$.   Then the corresponding Weyl
function $M(\cd)$ is
\begin {gather}
M(\l)=\begin{pmatrix} m_0(\l )& M_{2}(\l) \cr M_{3}(\l) & M_{4}(\l)
\end{pmatrix}: H_0\oplus \cH_b\to H_0\oplus\cH_b,
\;\;\;\l\in\rho (A_0)\label{3.50}\\
m_0(\l)=(\cP_0+\wh\cP)v_0(a,\l)+\tfrac i 2 P_{\wh H},\qquad
M_{2}(\l)=(\cP_0+\wh\cP)u(a,\l),\label{3.51}\\
M_{3}(\l)=-\G_{1b}v_0(\l), \qquad M_{4}(\l)=-\G_{1b}u(\l), \quad \l\in\rho
(A_0).\label{3.52}
\end{gather}
\end{corollary}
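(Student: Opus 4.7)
The plan is to reduce the corollary directly to Proposition~\ref{pr3.5} and Proposition~\ref{pr3.6} in the symmetric deficiency index case, and then extend from $\bC\setminus\bR$ to all of $\rho(A_0)$ by analyticity of the $\g$-field and Weyl function of an ordinary boundary triplet.

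First I would invoke Lemma~\ref{lem3.2} in the case $n_+(\Tmi)=n_-(\Tmi)$: this forces $\wt\cH_b=\cH_b$, so the decomposing collection $\Pi_+=\bta$ from Proposition~\ref{pr3.3} degenerates into an ordinary decomposing boundary triplet $\Pi=\bt$ with $\cH=H_0\oplus\cH_b$, and in particular $A_0=A_0^*$ (so $\CR\subset\rho(A_0)$). For such $\l\in\CR$, Proposition~\ref{pr3.5} immediately yields the required solutions $v_0(\cdot,\l)\in\lo{H_0}$ and $u_\pm(\cdot,\l)\in\lo{\cH_b}$: under $\wt\cH_b=\cH_b$ the defining conditions for $u_+$ on $\bC_+$ and $u_-$ on $\bC_-$ become literally the same system (the projection $P_{\cH_b}$ is the identity, and $\G_{0b}u(\l)=I_{\cH_b}$ in both cases), so one can merge them into a single function $u(\cdot,\l)$ on $\CR$ satisfying the boundary conditions displayed in the statement, with uniqueness inherited from Proposition~\ref{pr3.5}.

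Next I would extend to arbitrary $\l\in\rho(A_0)$. Here I would use the fact that for an ordinary boundary triplet $\Pi=\bt$ of a closed symmetric relation the $\g$-field $\g(\cdot):\rho(A_0)\to[\cH,\gH]$ and the Weyl function $M(\cdot):\rho(A_0)\to[\cH]$ are holomorphic, and by Proposition~\ref{pr3.6} (applied with $\wt\cH_b=\cH_b$) they admit, on $\CR$, the representation $\g(\l)=\pi Z(\l)$ together with the block form of $M(\l)$ coming from the solutions $v_0(\cdot,\l)$ and $u(\cdot,\l)$. For real $\l\in\rho(A_0)$ one produces the required pair by evaluating $\g(\l)$ on the natural basis vectors of $\cH=H_0\oplus\cH_b$: since $\g(\l)$ maps into $\gN_\l(\Tmi)=\pi\cN_\l$ and $\pi\up\cN_\l$ is injective (by definiteness), the columns of $\g(\l)$ lift to uniquely determined $L^2_\D$-solutions of \eqref{3.2} which by continuity in $\l$ satisfy the same boundary relations as on $\CR$. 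Uniqueness at real points is again forced by the boundary conditions together with the fact that two $L^2_\D$-solutions with identical boundary values would differ by an element of $\cN_\l\cap\ker(\pi)=\{0\}$ via the ordinary triplet property.

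Finally the Weyl function formula follows by specialising Proposition~\ref{pr3.6}: in the equal deficiency case $M_+(\cdot)$ \emph{is} the Weyl function $M(\cdot)$ of the ordinary triplet $\Pi$, the block decomposition \eqref{3.43} becomes \eqref{3.50}, and the identifications \eqref{3.45}, \eqref{3.47} of the blocks in terms of $v_0(a,\l)$, $u_+(a,\l)$, $\G_{1b}v_0(\l)$, $\G_{1b}u_+(\l)$ rewrite as \eqref{3.51}, \eqref{3.52} upon substituting $u_+=u$. By analytic continuation along $\rho(A_0)$ the same block identities persist, which gives the stated formula. The main obstacle is precisely this extension from $\CR$ to real points of $\rho(A_0)$: one must check that the analytic continuation of the $\g$-field produces solutions in $\lo{H_0}$ and $\lo{\cH_b}$ satisfying the boundary conditions, and the cleanest way to see this is to realise the columns of $\g(\l)$ as $L_\D^2$-solutions via the definiteness of the system, which collapses the argument to the holomorphy of $\g(\cdot)$ and $M(\cdot)$ on $\rho(A_0)$.
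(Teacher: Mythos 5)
Your argument is correct and follows the route the paper intends: Corollary \ref{cor3.7} is imported from \cite{AlbMalMog13} with no proof given, being understood as the specialization of Propositions \ref{pr3.5} and \ref{pr3.6} to the case $\wt\cH_b=\cH_b$, and your reconstruction — merging $u_\pm$ into a single $u$ on $\CR$ and then realizing the columns of $\g(\l)$ as $L_\D^2$-solutions via definiteness so that the identities $\G_0\{\g(\l)h,\l\g(\l)h\}=h$ and $M(\l)h=\G_1\{\g(\l)h,\l\g(\l)h\}$ yield the boundary conditions and the block formulas on all of $\rho(A_0)$ — is exactly that specialization. The only minor remark is that the appeal to continuity/analytic continuation at real points of $\rho(A_0)$ is superfluous, since these defining identities of the ordinary boundary triplet hold pointwise on all of $\rho(A_0)$.
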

\section {Generalized resolvents and characteristic matrices of symmetric systems}
\subsection{Generalized resolvents}
\begin{definition}\label{def4.1}
Let $\cH_0$ and $\cH_1$ be finite dimensional Hilbert spaces \eqref{3.33}. Then
a boundary parameter $\tau$ is a collection $\pair\in\RH$ of the form
\eqref{2.1}.
\end{definition}
In the case of equal deficiency indices $n_+(\Tmi)=n_-(\Tmi)$ one has $\wt
\cH_b=\cH_b, \; \cH_0=\cH_1=:\cH$ and a boundary parameter is an operator pair
$\tau\in\wt R(\cH)$ defined by \eqref{2.3}. If in addition $\tau\in\wt
R^0(\cH)$, then a boundary parameter will be called self-adjoint. Such a
boundary parameter $\tau$ admits the representation as a self-adjoint operator
pair \eqref{2.4}.

For each boundary parameter $\pair$ of the form \eqref{2.1} we assume that
\begin {gather}
C_0(\l)=(C_{0a}(\l),\; \wh C_0(\l), \; C_{0b}(\l)):H\oplus\wh H\oplus \wt
\cH_b\to \cH_0, \quad \l\in\bC_+\label{4.1}\\
C_1(\l)=(C_{1a}(\l),\; \wh C_1(\l), \; C_{1b}(\l)):H\oplus\wh H\oplus
\cH_b\to \cH_0, \quad \l\in\bC_+\label{4.2}\\
D_0(\l)=(D_{0a}(\l),\; \wh D_0(\l), \; D_{0b}(\l)):H\oplus\wh H\oplus \wt
\cH_b\to \cH_1, \quad \l\in\bC_-\label{4.3}\\
D_1(\l)=(D_{1a}(\l),\; \wh D_1(\l), \; D_{1b}(\l)):H\oplus\wh H\oplus \cH_b\to
\cH_1, \quad \l\in\bC_-\label{4.4}
\end{gather}
are the block matrix representations of $C_j(\l)$ and $D_j(\l),\; j\in\{0,1\}$.

If $n_+(\Tmi)=n_-(\Tmi)$, then for each boundary parameter \eqref{2.3} we
assume that
\begin {gather}
C_0(\l)=(C_{0a}(\l),\; \wh C_0(\l), \; C_{0b}(\l)):H\oplus\wh H\oplus
\cH_b\to \cH, \quad \l\in\CR\label{4.5}\\
C_1(\l)=(C_{1a}(\l),\; \wh C_1(\l), \; C_{1b}(\l)):H\oplus\wh H\oplus \cH_b\to
\cH, \quad \l\in\CR\label{4.6}
\end{gather}
are the block matrix representations of $C_0(\l)$ and $C_1(\l)$.

In the case of a self-adjoint boundary parameter \eqref{2.4} the equalities
\eqref{4.5} and \eqref{4.6} take the form
\begin {equation}\label{4.7}
C_0=(C_{0a}, \; \wh C_0, \; C_{0b}):H\oplus\wh H\oplus \cH_b\to \cH, \quad
C_1=(C_{1a}, \; \wh C_1, \; C_{1b}):H\oplus\wh H\oplus \cH_b\to \cH
\end{equation}
\begin {lemma}\label{lem4.2}
Let $\wt\cH_b$ be decomposed as $\wt\cH_b=\cH_b\oplus \cH_b^\perp$ and let
\begin {equation*}
\bH_b:=\wt\cH_b\oplus \wh H\oplus \cH_b=\cH_b\oplus(\cH_b^\perp\oplus \wh
H)\oplus \cH_b,
\end{equation*}
\begin {equation}\label{4.8}
J_b=\begin{pmatrix} 0& 0& -I_{\cH_b}\cr 0 & i I_{\cH_b^\perp\oplus \wh H} & 0
\cr I_{\cH_b} & 0 & 0\end{pmatrix}:\underbrace{\cH_b\oplus(\cH_b^\perp\oplus
\wh H)\oplus \cH_b }_{\bH_b}\to \underbrace{\cH_b\oplus(\cH_b^\perp\oplus \wh
H)\oplus \cH_b}_{\bH_b}.
\end{equation}
Then the equalities
\begin {gather}
C_a(\l)=(-C_{1a}(\l),\; i\wh C_0(\l)-\tfrac 1 2 \wh C_1(\l), \; -
C_{0a}(\l)):H\oplus\wh H\oplus H\to \cH_0, \quad \l\in\bC_+\label{4.10}\\
C_b(\l)=(C_{0b}(\l),\; -i\wh C_0(\l)-\tfrac 1 2 \wh C_1(\l), \;
C_{1b}(\l)):\wt\cH_b\oplus\wh H\oplus \cH_b\to \cH_0, \quad \l\in\bC_+
\label{4.11}\\
D_a(\l)=(-D_{1a}(\l),\; i\wh D_0(\l)-\tfrac 1 2 \wh D_1(\l), \; -
D_{0a}(\l)):H\oplus\wh H\oplus H\to \cH_1, \quad \l\in\bC_-\label{4.12}\\
D_b(\l)=(D_{0b}(\l),\; -i\wh D_0(\l)-\tfrac 1 2 \wh D_1(\l), \;
D_{1b}(\l)):\wt\cH_b\oplus\wh H\oplus \cH_b \to \cH_1, \quad
\l\in\bC_-\label{4.13}
\end{gather}
establish a bijective correspondence between all boundary parameters $\pair$ of
the form \eqref{2.1} and \eqref{4.1}--\eqref{4.4} and all collections of
holomorphic operator functions
\begin {equation}\label{4.14}
(C_a(\l), \; C_b(\l)):\bH\oplus\bH_b\to \cH_0, \;\;\l\in\bC_+;\quad (D_a(\l),
\; D_b(\l)):\bH\oplus\bH_b\to \cH_1, \;\;\l\in\bC_-
\end{equation}
satisfying
\begin {gather}
\ran (C_a(\l), \; C_b(\l))=\cH_0, \;\;\l\in\bC_+;\qquad \ran (D_a(\l), \;
D_b(\l))= \cH_1, \;\;\l\in\bC_-\label{4.15}\\
i (C_a(\l)J C_a^*(\l)-C_b(\l)J_b C_b^*(\l))\geq 0, \quad i (D_a(\l)J
D_a^*(\l)-D_b(\l)J_b D_b^*(\l))\leq 0\label{4.16}\\
C_a(\l)J D_a^*(\ov\l)=C_b(\l)J_b D_b^*(\ov\l), \;\; \l\in\bC_+\label{4.17}
\end{gather}
If in addition $n_+(\Tmi)=n_-(\Tmi)$, then $\cH_b^\perp=\{0\}, \;
\bH_b=\cH_b\oplus \wh H\oplus \cH_b$, $J_b$ takes the form \eqref{1.12}
and the equalities
\begin {gather}
C_a(\l)=(-C_{1a}(\l),\; i\wh C_0(\l)-\tfrac 1 2 \wh C_1(\l), \; -
C_{0a}(\l)):H\oplus\wh H\oplus H\to \cH_, \quad \l\in\CR\label{4.18}\\
C_b(\l)=(C_{0b}(\l),\; -i\wh C_0(\l)-\tfrac 1 2 \wh C_1(\l), \;
C_{1b}(\l)):\cH_b\oplus\wh H\oplus \cH_b\to \cH, \quad \l\in\CR\label{4.19}
\end{gather}
establish a bijective correspondence between all boundary parameters $\tau$ of
the form \eqref{2.3} and \eqref{4.5}, \eqref{4.6} and all holomorphic operator
functions
\begin {equation}\label{4.20}
(C_a(\l), \; C_b(\l)):\bH\oplus\bH_b\to \cH, \;\;\l\in\CR
\end{equation}
satisfying for all $\l\in\CR$ the relations \eqref{1.14} and \eqref{1.15}. Moreover, in the case $n_+(\Tmi)=n_-(\Tmi)$ the equalities
\begin {gather}
C_a=(-C_{1a},\; i\wh C_0-\tfrac 1 2 \wh C_1, \; - C_{0a}):H\oplus\wh H\oplus
H\to \cH\label{4.23}\\
C_b=(C_{0b},\; -i\wh C_0-\tfrac 1 2 \wh C_1, \; C_{1b}):\cH_b\oplus\wh H\oplus
\cH_b\to \cH\label{4.24}
\end{gather}
give a bijective correspondence between all self-adjoint boundary parameters
$\tau$ of the form \eqref{2.4} and \eqref{4.7} and all operators
$(C_a,C_b):\bH\oplus\bH_b\to \cH$ satisfying \eqref{1.18}.
\end{lemma}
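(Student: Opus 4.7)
The plan is to recognize \eqref{4.10}--\eqref{4.13} as the image of the boundary parameter under a change of basis implemented by the decomposing boundary triplet of Proposition \ref{pr3.3}, and then to translate each defining condition of $\RH$ into one of the conditions \eqref{4.15}--\eqref{4.17}.

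First I would substitute $h_0=\G_0\{\wt y,\wt f\}$, $h_1=\G_1\{\wt y,\wt f\}$ from \eqref{3.34}, \eqref{3.35} into the expression $C_0(\l)h_0+C_1(\l)h_1$, using the block decompositions \eqref{4.1}, \eqref{4.2}, and then collect terms by the regular value $y(a)=(y_0(a),\wh y(a),y_1(a))\in\bH$ and the singular value $\G_b y=(\G_{0b}y,\wh\G_b y,\G_{1b}y)\in\bH_b$. Up to a common sign this produces the identity
\begin{equation*}
C_0(\l)\G_0\{\wt y,\wt f\}+C_1(\l)\G_1\{\wt y,\wt f\}=-(C_a(\l)y(a)+C_b(\l)\G_b y),\quad \l\in\bC_+,
\end{equation*}
with $C_a(\l),C_b(\l)$ given by \eqref{4.10}, \eqref{4.11}; the analogous substitution on $\bC_-$ with $(D_0(\l),D_1(\l))$ yields \eqref{4.12}, \eqref{4.13}. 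Since this substitution is realized by an invertible constant linear bijection between the block spaces, \eqref{4.10}--\eqref{4.13} establish a bijection at the level of holomorphic operator families. In particular, the surjectivity built into the definition of $\RH$ (namely $\ran(C_0(\l),C_1(\l))=\cH_0$ and $\ran(D_0(\l),D_1(\l))=\cH_1$) is equivalent to \eqref{4.15}. It remains to translate the Nevanlinna-type conditions on $\pair$ into \eqref{4.16} and the cross-identity $\tau_+(\l)^*=\tau_-(\ov\l)$ into \eqref{4.17}, by substituting the block decompositions \eqref{4.1}--\eqref{4.4} into the characterizing inequalities of $\RH$ from \cite{Mog13.2} and regrouping along $\bH$ and $\bH_b$, using the explicit forms of $J$ in \eqref{1.3} and $J_b$ in \eqref{4.8}.

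This last translation is the main technical step and the chief obstacle. The essential bookkeeping is that the extra degeneracy term $i(P_{\cH_b^\perp}\G_{0b}y,P_{\cH_b^\perp}\G_{0b}z)_{\wt\cH_b}+i(\wh\G_b y,\wh\G_b z)_{\wh H}$ appearing in \eqref{3.29} is absorbed precisely by the middle block $iI_{\cH_b^\perp\oplus\wh H}$ of $J_b$ in \eqref{4.8}, while the $iI_{\wh H}$ block of $J$ in \eqref{1.3} accounts for the $\wh H$-contribution on the regular side. With this bookkeeping the quadratic forms associated to $(C_0(\l),C_1(\l))$ and $(D_0(\l),D_1(\l))$ split cleanly into an $\bH$-part (yielding $C_a(\l)JC_a^*(\l)$ and $D_a(\l)JD_a^*(\l)$) and an $\bH_b$-part (yielding $C_b(\l)J_bC_b^*(\l)$ and $D_b(\l)J_bD_b^*(\l)$), and the inequalities and identity of \eqref{4.16}, \eqref{4.17} drop out.

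Finally, the equal-deficiency case $n_+(\Tmi)=n_-(\Tmi)$ is a specialization: one has $\wt\cH_b=\cH_b$, $\cH_b^\perp=\{0\}$, $\cH_0=\cH_1=:\cH$, and $J_b$ reduces to \eqref{1.12}; then $\Pi_+$ is an ordinary boundary triplet, $(D_0(\l),D_1(\l))$ is determined by $(C_0(\l),C_1(\l))$ on $\CR$, and \eqref{4.20} together with \eqref{1.14}, \eqref{1.15} arises as the common parametrization via \eqref{4.18}, \eqref{4.19}. For self-adjoint $\tau\in\wt R^0(\cH)$ of the form \eqref{2.4} the operator pairs are constant and satisfy $\im(C_1C_0^*)=0$, $0\in\rho(C_0\pm iC_1)$; the same calculation then produces the constant operators \eqref{4.23}, \eqref{4.24} with $(C_a,C_b):\bH\oplus\bH_b\to\cH$ satisfying the surjectivity and the equality $C_aJC_a^*=C_bJ_bC_b^*$ of \eqref{1.18}.
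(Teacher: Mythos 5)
Your proposal is correct and follows essentially the same route as the paper: the paper likewise verifies by direct block computation that the quantities characterizing the class $\wt R_+(\cH_0,\cH_1)$ (formed from $C_{01}(\l)=C_0(\l)\up\cH_1$ and $C_{02}(\l)=C_0(\l)\up\cH_2$ with $\cH_2=\cH_b^\perp$, and likewise for $D_j$) transform exactly into \eqref{4.15}--\eqref{4.17}, and then invokes the characterization of that class from \cite{AlbMalMog13}, with the equal-deficiency and self-adjoint cases obtained as specializations. One minor caution: the motivational identity should read $C_0(\l)\G_0\{\wt y,\wt f\}-C_1(\l)\G_1\{\wt y,\wt f\}=C_a(\l)y(a)+C_b(\l)\G_b y$ (cf.\ \eqref{4.32.0}) rather than the sum you wrote, but this aside is not load-bearing for the lemma.
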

\begin{proof}
It follow from \eqref{3.33} that $\cH_2(=\cH_0\ominus\cH_1)=\cH_b^\perp$.
Therefore by \eqref{4.1} and \eqref{4.3}
\begin {gather*}
C_{01}(\l):=C_0(\l)\up \cH_1=(C_{0a}(\l),\; \wh C_0(\l), \;
C_{0b}(\l)\up\cH_b), \quad C_{02}(\l):=C_0(\l)\up
\cH_2= C_{0b}(\l)\up\cH_b^\perp\\
D_{01}(\l):=D_0(\l)\up \cH_1=(D_{0a}(\l),\; \wh D_0(\l), \;
D_{0b}(\l)\up\cH_b), \quad D_{02}(\l):=D_0(\l)\up \cH_2=
D_{0b}(\l)\up\cH_b^\perp
\end{gather*}
and the immediate calculations give
\begin {gather*}
2\im (C_1(\l)C_{01}^*(\l))+C_{02}(\l)
C_{02}^*(\l)=i (C_a(\l)J C_a^*(\l)-C_b(\l)J_b C_b^*(\l)), \quad \l\in\bC_+\\
2\im (D_1(\l)D_{01}^*(\l))+D_{02}(\l) D_{02}^*(\l)=i (D_a(\l)J
D_a^*(\l)-D_b(\l)J_b D_b^*(\l)), \quad \l\in\bC_-\\
C_1(\l)D_{01}^*(\ov\l)-C_{01}(\l)D_1^*(\ov\l)+i
C_{02}(\l)D_{02}^*(\ov\l)=C_b(\l)J_b D_b^*(\ov\l)-C_a(\l)J D_a^*(\ov\l), \quad
\l\in\bC_+.
\end{gather*}
Moreover, the following equivalences are obvious
\begin {gather*}
\ran (C_0(\l),\, C_1(\l))=\cH_0\iff \ran (C_a(\l),\, C_b(\l))=\cH_0\\
 \ran
(D_0(\l),\, D_1(\l))=\cH_1\iff \ran (D_a(\l),\, D_b(\l))=\cH_1.
\end{gather*}
This and \cite[Proposition 2.5]{AlbMalMog13} yield the desired statements.
\end{proof}
Let $\pair$ be a boundary parameter defined by \eqref{2.1} and
\eqref{4.1}--\eqref{4.4} and let $C_a(\l),\; C_b(\l)$ and $D_a(\l),\; D_b(\l)$
be the operator-functions \eqref{4.10}--\eqref{4.13} (hence the relations
\eqref{4.15}--\eqref{4.17} hold). For a given function $f\in\lI$ consider the
 boundary problem
\begin {gather}
J y'-B(t)y = \l \D(t) y+\D(t)f(t), \quad t\in\cI\label{4.28}\\
C_a(\l)y(a)+C_b(\l)\G_b y = 0, \quad \l\in\bC_+;\qquad D_a(\l)y(a)+D_b(\l)\G_b
y = 0, \quad \l\in\bC_-.\label{4.29}
\end{gather}
A function $y(\cd,\cd):\cI\tm (\CR)\to\bH$ is called a solution of this problem
if for each $\l\in\CR$ the function $y(\cd,\l)$ belongs to $\AC\cap\lI$ and
satisfies the equation \eqref{4.28} a.e. on $\cI$ (so that $y\in\dom\tma$) and
the boundary conditions \eqref{4.29}.

If $n_+(\Tmi)=n_-(\Tmi)$ and $\tau$ is a boundary parameter  defined by
\eqref{2.3} and \eqref{4.5}, \eqref{4.6}, then the boundary conditions
\eqref{4.29} take the form
\begin {gather}\label{4.30}
C_a(\l)y(a)+C_b(\l)\G_b y = 0, \quad \l\in\CR,
\end{gather}
with $C_a(\l)$ and $C_b(\l)$ given by \eqref{4.18} and \eqref{4.19}. Moreover,
if $\tau$ is a self-adjoint boundary parameter \eqref{2.4}, \eqref{4.7}, then
\eqref{4.30} becomes a self-adjoint boundary condition
\begin {gather}\label{4.31}
C_a y(a)+C_b \G_b y = 0,
\end{gather}
where $C_a$ and $C_b$ are the operators \eqref{4.23} and \eqref{4.24} (hence
they satisfy \eqref{1.18}).

In the following theorem we describe all generalized resolvents (and,
consequently, all exit space self-adjoint extensions) of $\Tmi$ in terms of
$\l$-depending boundary conditions.
\begin{theorem}\label{th4.3}
Let $\pair$ be a boundary parameter defined by \eqref{2.1} and
\eqref{4.1}--\eqref{4.4} and let $C_a(\l),\;C_b(\l)$ and $D_a(\l),\;D_b(\l)$ be
given by \eqref{4.10}--\eqref{4.13}. Then for every $f\in\lI$ the boundary
problem \eqref{4.28},  \eqref{4.29} has a unique solution $y(t,\l)=y_f(t,\l) $
and the equality
\begin {equation*}
R(\l)\wt f = \pi(y_f(\cd,\l)), \quad \wt f\in \LI, \quad f\in\wt f, \quad
\l\in\CR
\end{equation*}
defines a generalized resolvent $R(\l)=:R_\tau(\l)$ of $\Tmi$. Conversely, for
each generalized resolvent $R(\l)$ of $\Tmi$ there exists a unique boundary
parameter $\tau$ such that $R(\l)=R_\tau(\l)$.

If in addition $n_+(\Tmi)=n_-(\Tmi)$, then   the above statements hold with the
boundary parameter $\tau$ of the form \eqref{2.3}, \eqref{4.5}, \eqref{4.6} and
the boundary condition \eqref{4.30} in place of \eqref{4.29}. Moreover,
$R_\tau(\l)$ is a canonical resolvent of $\Tmi$ if and only if $\tau$ is a
self-adjoint boundary parameter \eqref{2.4}, \eqref{4.7}. In this case
$R_\tau(\l)=(\wt T^\tau - \l)^{-1}$, where $\wt T^\tau$  is given by \eqref{1.19} with the operators $C_a$ and $C_b$ of the form \eqref{4.23},  \eqref{4.24}.
\end{theorem}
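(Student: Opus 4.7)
The plan is to apply the general Krein-type parametrization of generalized resolvents via boundary triplets to the decomposing boundary triplet $\Pi_+=\bta$ constructed in Proposition \ref{pr3.3}, and then translate the resulting abstract boundary condition into the concrete form \eqref{4.29} using the explicit expressions \eqref{3.34}, \eqref{3.35} for $\G_0,\G_1$ together with Lemma \ref{lem4.2}.

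First, I would invoke the general parametrization of generalized resolvents associated with a boundary triplet of the class considered in \cite{Mog13.2}: for the triplet $\Pi_+$ of $\Tma$, the formula establishing a bijective correspondence $\tau\leftrightarrow R(\l)$ between all boundary parameters $\pair\in\RH$ and all generalized resolvents of $\Tmi$ is characterized by the following property. For $\wt f\in\LI$ put $\wt y(\l):=R(\l)\wt f$ and $\hat y(\l)=\{\wt y(\l),\l\wt y(\l)+\wt f\}\in\Tma$; then $R(\l)$ is the generalized resolvent corresponding to $\tau$ if and only if
\begin{equation*}
\{\G_0\hat y(\l),\G_1\hat y(\l)\}\in\tau_\pm(\l),\qquad \l\in\bC_\pm.
\end{equation*}
By Remark \ref{rem3.1a}, to each $\hat y(\l)\in\Tma$ corresponds a unique representative $y(\cd,\l)\in\AC\cap\lI$, which by definition of $\Tma$ satisfies \eqref{4.28} a.e.\ on $\cI$.

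Second, I would substitute the explicit formulas \eqref{3.34}, \eqref{3.35} for $\G_0,\G_1$ together with the block decompositions \eqref{4.1}--\eqref{4.4} of $C_j(\l),D_j(\l)$ into the abstract condition $C_0(\l)\G_0\hat y+C_1(\l)\G_1\hat y=0$ (and the analogue for $\l\in\bC_-$). A direct calculation, grouping the terms in $y_0(a),\hat y(a),y_1(a)$ and in $\G_{0b}y,\hat\G_by,\G_{1b}y$, shows that this is precisely the condition $C_a(\l)y(a)+C_b(\l)\G_by=0$, where $C_a(\l),C_b(\l)$ are read off from \eqref{4.10}, \eqref{4.11}. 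This is the content of the bijection in Lemma \ref{lem4.2}. Combining the two steps, the function $y(\cd,\l)$ from Remark \ref{rem3.1a} associated with $R_\tau(\l)\wt f$ is an $\AC\cap\lI$-solution of the boundary problem \eqref{4.28}, \eqref{4.29}, and conversely any solution of this boundary problem produces, upon passing to the equivalence class, the value $R_\tau(\l)\wt f$. Uniqueness of $y_f(\cd,\l)$ is immediate from the injectivity of $\tau\mapsto R_\tau(\l)$ and the uniqueness of the representative in Remark \ref{rem3.1a}.

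Third, for the case $n_+(\Tmi)=n_-(\Tmi)$, Proposition \ref{pr3.3} gives that $\Pi_+$ reduces to the \emph{ordinary} decomposing boundary triplet $\Pi=\bt$ with $\cH=H_0\oplus\cH_b$. Correspondingly, the class $\RH$ reduces to $\Rh$ (Nevanlinna pairs \eqref{2.3}), so the boundary condition \eqref{4.29} collapses to the single condition \eqref{4.30}. By the standard theory of ordinary boundary triplets, $R(\l)$ is canonical iff the parameter is constant and self-adjoint, i.e.\ $\tau\in\Rz$; equivalently, by Lemma \ref{lem4.2}, the operators $C_a,C_b$ in \eqref{4.23}, \eqref{4.24} satisfy \eqref{1.18}. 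The corresponding self-adjoint extension is $\wt T^\tau=\{\hat y\in\Tma:\{\G_0\hat y,\G_1\hat y\}\in\tau\}$, which by the same algebraic translation as above is exactly the relation \eqref{1.19}, and $R_\tau(\l)=(\wt T^\tau-\l)^{-1}$.

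The main obstacle is the purely algebraic translation between the abstract boundary condition $\{\G_0\hat y,\G_1\hat y\}\in\tau_\pm(\l)$ and the concrete one $C_a(\l)y(a)+C_b(\l)\G_by=0$: one has to verify by a careful bookkeeping that the apparently ad hoc signs and combinations such as $-C_{1a}(\l)$, $i\wh C_0(\l)-\tfrac12\wh C_1(\l)$, and $-i\wh C_0(\l)-\tfrac12\wh C_1(\l)$ in \eqref{4.10}, \eqref{4.11} are precisely those produced by expanding $C_0(\l)\G_0\hat y+C_1(\l)\G_1\hat y$ using \eqref{3.34}, \eqref{3.35}, and that the resulting map on parameters is both well-defined (independent of the equivalence class of pairs representing $\tau_\pm(\l)$) and bijective onto the class of operator functions \eqref{4.14} satisfying \eqref{4.15}--\eqref{4.17}. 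Once this identification is in place, the rest of the proof is a direct application of the abstract boundary triplet machinery.
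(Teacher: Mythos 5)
Your proposal follows the paper's proof: the paper likewise reduces the boundary problem \eqref{4.28}, \eqref{4.29} to the abstract boundary condition for the decomposing boundary triplet $\Pi_+$ via the identity \eqref{4.32.0} and then invokes the general parametrization of generalized resolvents from \cite[Theorem 3.11]{Mog13.2}, with Lemma \ref{lem4.2} supplying the dictionary between $\tau$ and the pairs $(C_a(\l),C_b(\l))$, $(D_a(\l),D_b(\l))$. One caveat on the bookkeeping you yourself flag as the main obstacle: the abstract condition that matches \eqref{4.10}--\eqref{4.13} is $C_0(\l)\G_0\{\wt y,\l\wt y+\wt f\}-C_1(\l)\G_1\{\wt y,\l\wt y+\wt f\}=0$, i.e.\ $\{\G_0,-\G_1\}$ applied to the pair must belong to $\tau_+(\l)$ (the sign convention behind $T_\tau(\l)=-(\tau_+(\l)+M_+(\l))^{-1}$ in \eqref{4.47}), whereas with the plus sign $C_0(\l)\G_0+C_1(\l)\G_1$ that you wrote the expansion via \eqref{3.34}, \eqref{3.35} produces $+C_{1a}(\l)$ and $i\wh C_0(\l)+\tfrac 1 2 \wh C_1(\l)$ rather than the entries of \eqref{4.10}, so the claimed identification would fail until the sign is corrected.
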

\begin{proof}
Let $\Pi_+=\bta$ be the decomposing boundary triplet \eqref{3.33}--\eqref{3.35}
for $\Tma$. Then the immediate checking shows that
\begin {equation}\label {4.32.0}
C_0(\l)\G_0\{\wt y,\wt f\}-C_1(\l)\G_1\{\wt y,\wt f\}=C_a(\l)y(a)+
C_b(\l)\G_b(y), \quad \{\wt y,\wt f\}\in\Tma.
\end{equation}
Hence the boundary problem \eqref{4.28}, \eqref{4.29} is equivalent to the
 following one:
\begin {gather}
\{\wt y, \l \wt y+\wt f\}\in \Tma\label{4.32.1}\\
C_0(\l)\G_0\{\wt y, \l \wt y+\wt f\}-C_1(\l) \G_1 \{\wt y, \l \wt y+\wt f\}=0,
\quad \l \in\bC_+\label{4.32.2}\\
D_0(\l)\G_0\{\wt y, \l \wt y+\wt f\}-D_1(\l) \G_1 \{\wt y, \l \wt y+\wt f\}=0,
\quad \l \in\bC_-.\label{4.32.3}
\end{gather}
Now application of \cite[Theorem 3.11]{Mog13.2} gives the required statement.
\end{proof}
\subsection{Characteristic matrices}
The following theorem is well known (see e.g. \cite{Bru78,DLS93,Sht57}).
\begin{theorem}\label{th4.6}
Let $Y_0(\cd,\l)$ be the $[\bH]$-valued operator solution of Eq. \eqref{3.2}
satisfying $Y_0(a,\l)=I_{\bH}$. Then for each generalized resolvent $R(\l)$ of
$\Tmi$ there exists a unique operator function $\Om (\cd):\CR\to [\bH]$ such
that for each  $ \wt f\in\LI$ and $\l\in\CR$
\begin {equation*}
R(\l)\wt f=\pi\left(\int_\cI Y_0(\cd,\l)(\Om(\l)+\tfrac 1 2 \, {\rm
sgn}(t-x)J)Y_0^*(t,\ov\l)\D(t) f(t)\,dt \right), \quad f\in\wt f.
\end{equation*}
Moreover, $\Om(\cd)$ is a Nevanlinna operator function.
\end{theorem}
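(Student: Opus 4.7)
The plan is to combine a variation-of-parameters formula for the inhomogeneous symmetric system with the compressed-resolvent structure of $R(\l)$, extracting $\Om(\l)$ from the dependence of the initial value $y(a,\l)$ of $R(\l)\wt f$ on $\wt f$. Fix $\l\in\CR$ and realize $R(\l)=P_\gH(\wt A-\l)^{-1}\up\gH$ for some self-adjoint extension $\wt A\supset\Tmi$ in an exit space $\wt\gH\supset\gH:=\LI$ (Definition \ref{def2.1a}). A standard consequence of $\Tmi\subset\wt A$ and the self-adjointness of $\wt A$ is that $\{R(\l)\wt f,\,\l R(\l)\wt f+\wt f\}\in\Tma$, so by Remark \ref{rem3.1a} there is a unique representative $y(\cd,\l)\in\AC\cap\lI$ of $R(\l)\wt f$ satisfying the ODE $J y'(x,\l)-B(x)y(x,\l)=\l\D(x)y(x,\l)+\D(x)f(x)$ a.e.\ on $\cI$.

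From $J^*=-J$ and \eqref{3.2} one computes $\frac{d}{dt}\bigl(Y_0^*(t,\ov\l)JY_0(t,\l)\bigr)=0$, hence $Y_0^*(t,\ov\l)JY_0(t,\l)\equiv J$. Variation of parameters using this identity gives
\begin{equation*}
y(x,\l)=Y_0(x,\l)\,y(a,\l)-\int_a^x Y_0(x,\l)J Y_0^*(t,\ov\l)\D(t)f(t)\,dt,
\end{equation*}
and the symmetrization $\int_a^x=\tfrac12\int_\cI-\tfrac12\int_\cI{\rm sgn}(t-x)\,\cd$ rearranges this as
\begin{equation*}
y(x,\l)=Y_0(x,\l)\bigl[y(a,\l)-\tfrac12 J\,v(\l,\wt f)\bigr]+\tfrac12\int_\cI{\rm sgn}(t-x)Y_0(x,\l)JY_0^*(t,\ov\l)\D(t)f(t)\,dt,
\end{equation*}
where $v(\l,\wt f):=\int_\cI Y_0^*(t,\ov\l)\D(t)f(t)\,dt$; by Lemma \ref{lem3.1b} this is $F_0^*(\ov\l)\wt f$ with $F_0(\ov\l)=\pi Y_0(\ov\l)$.

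To produce $\Om(\l)\in[\bH]$ I would show the factorization $y(a,\l)=(\Om(\l)+\tfrac12 J)v(\l,\wt f)$, first on the dense subspace of $\wt f$ corresponding to compactly-supported $f$, where both sides are individually defined and linear in $\wt f$. The definiteness of \eqref{3.1} forces $\ov{\ran F_0^*(\ov\l)}=\bH$: if $h\in\bH$ is orthogonal to $\ran F_0^*(\ov\l)$, then $\D(t)Y_0(t,\ov\l)h\equiv 0$ a.e., whence by Definition \ref{def3.1} $Y_0(\cd,\ov\l)h\equiv 0$ and thus $h=Y_0(a,\ov\l)h=0$. Combined with the uniqueness in Remark \ref{rem3.1a} applied to the difference of two candidate solutions of the inhomogeneous equation with the same $v(\l,\wt f)$, this singles out a unique bounded operator $\Om(\l)\in[\bH]$ realizing the factorization. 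The desired kernel representation then holds on the dense subspace, and extends to all of $\LI$ by the $[\LI]$-boundedness of $R(\l)$; uniqueness of $\Om(\l)$ on $\CR$ is then automatic. The Nevanlinna properties transfer from the compressed resolvent: holomorphy of $\l\mapsto R(\l)$ yields holomorphy of $\Om(\cd)$, the identity $R(\l)^*=R(\ov\l)$ yields $\Om(\l)^*=\Om(\ov\l)$ after reading off the kernel, and the positivity $(\im\l)^{-1}\im R(\l)\geq 0$ yields $(\im\l)\cd\im\Om(\l)\geq 0$ by testing against a dense family of compactly-supported functions.

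The principal obstacle is the singular-endpoint case $Y_0(\cd,\ov\l)\notin\lo{\bH}$, in which neither $y(a,\l)$ nor $v(\l,\wt f)$ is defined on all of $\LI$: the terms $y(a,\l)$ and $\tfrac12 J v(\l,\wt f)$ must be handled together, their individual ``divergences'' cancelling in the combination $(\Om(\l)+\tfrac12 J)v(\l,\wt f)$. Justifying rigorously the passage from the dense subspace of compactly-supported $f$ to all of $\LI$ requires showing that the compound kernel $Y_0(x,\l)(\Om(\l)+\tfrac12{\rm sgn}(t-x)J)Y_0^*(t,\ov\l)$ defines a bounded operator $\LI\to\LI$, typically via duality with $R(\ov\l)$ and a limiting argument exploiting the $\LI$-continuity of $R(\l)$.
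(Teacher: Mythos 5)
First, a point of reference: the paper does not actually prove Theorem \ref{th4.6} --- it is stated as known, with citations to Bruk, Dijksma--Langer--de Snoo and \u{S}traus --- so there is no in-paper proof to compare against. The closest the paper comes is the proof of Theorem \ref{th4.7}, which obtains the kernel representation for each $R_\tau(\l)$ by a different route: the Krein formula \eqref{4.54} splits $R_\tau(\l)$ into $(A_0-\l)^{-1}$, written as an integral operator with the Green kernel \eqref{4.59} built from the $L_\D^2$-solutions $Z_\pm$ and $Y_\pm$, plus the term $\g_+(\l)T_\tau(\l)\g_-^*(\ov\l)$ handled via Lemma \ref{lem3.1b}; that route only ever integrates $Y_0^*(t,\ov\l)\D(t)f(t)$ against combinations known a priori to lie in $\lo{\bH}$, so no convergence problem at a singular endpoint arises. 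Your variation-of-parameters strategy is the classical one, and its computational core is correct: the identity $Y_0^*(t,\ov\l)JY_0(t,\l)\equiv J$, the resulting formula for $y(x,\l)$, the symmetrization of $\int_a^x$, and the surjectivity of $\wt f\mapsto v(\l,\wt f)$ onto $\bH$ via definiteness are all right.

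There are, however, two genuine gaps. The first concerns the key factorization $y(a,\l)=(\Om(\l)+\tfrac 1 2 J)v(\l,\wt f)$: what must be shown is that $v(\l,\wt f)=0$ forces $y_f(a,\l)=0$, and Remark \ref{rem3.1a} does not deliver this --- it only asserts that a given pair in $\Tma$ has a unique absolutely continuous representative, not that two right-hand sides with equal $v$ produce resolvents with equal initial values. The correct argument uses the inclusion $\Tmi\subset\wt A$ concretely: if $f$ has compact support and $v(\l,\wt f)=0$, then $u(x):=-\int_a^x Y_0(x,\l)JY_0^*(t,\ov\l)\D(t)f(t)\,dt$ vanishes at $a$ and identically near $b$, hence $\{u,\l u+f\}\in\tmi$, hence $\{\pi u,\l\pi u+\wt f\}\in\wt A$ and $R(\l)\wt f=\pi u$, giving $y_f(a,\l)=u(a)=0$; linearity and surjectivity of $v$ then define $\Om(\l)$ on all of the finite-dimensional $\bH$. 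Second, the passage from compactly supported $f$ to all of $\LI$ is not a routine density argument: one must prove that $Y_0(\cd,\ov\l)(\Om(\l)+\tfrac 1 2 J)^*$ belongs to $\lo{\bH}$ (equivalently, that $Y_0(\cd,\l)(\Om(\l)-\tfrac 1 2 J)$ is an $L_\D^2$-solution near $b$, cf.\ $Z_\tau(\cd,\l)$ in Theorem \ref{th4.10}); only then does the kernel integral converge for arbitrary $f\in\lI$ by Cauchy--Schwarz. You correctly name this as the principal obstacle, but ``duality with $R(\ov\l)$ plus a limiting argument'' is not yet a proof. The Nevanlinna positivity is likewise glossed: $\im (R(\l)\wt f,\wt f)_\D$ receives contributions from both the $\Om(\l)$-block and the ${\rm sgn}$-block of the kernel, and extracting $\im\l\cd\im\Om(\l)\geq0$ requires a computation of the kind carried out for \eqref{4.83}.
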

\begin{definition}\label{def4.6a}$\,$\cite{Bru78,Sht57}
The operator function $\Om(\cd) $ is called the characteristic matrix of the
symmetric system \eqref{3.1} corresponding  to the generalized resolvent
$R(\l)$.
\end{definition}
In the following the characteristic matrix $\Om(\cd)$ will be called canonical
if it corresponds to the canonical resolvent $R(\l)$ of $\Tmi$.

Since $\Om^*(\l)=\Om (\ov\l), \; \l\in\CR$, it follows that the characteristic
matrix $\Om(\cd)$ is uniquely defined, in fact, by its restriction onto
$\bC_+$.

Let the assumptions (A1) and (A2) from Subsection \ref{sub3.3} be satisfied,
let $\cH_0$ and $\cH_1$ be finite dimensional Hilbert spaces \eqref{3.33}, let
$\tau$ be a boundary parameter and let $R_\tau(\l)$ be the corresponding
generalized resolvent of $\Tmi$ (see Theorem \ref{th4.3}). In the following we
denote by $\Om_\tau(\cd)$ the characteristic matrix corresponding to
$R_\tau(\cd)$.

It follows from Theorem \ref{th4.3} that the equality $\Om(\l)=\Om_\tau(\l)$
gives a parametrization of all characteristic matrices of the system
\eqref{3.1} in terms of the boundary parameter $\tau$. In the following theorem
we represent this parametrization in the explicit form.
\begin{theorem}\label{th4.7}
Let $A_0$ be the maximal symmetric extension \eqref{3.36} of $\Tma$ and let  $M_+(\cd)$ be the operator function \eqref{3.43}--\eqref{3.47}. Moreover,
let $P_{\wh H}\in [H_0]$ be the orthoprojection in $H_0$ onto $\wh H$ (see
\eqref{3.0.1}) and let
\begin {gather}
\Om_0(\l)=\begin{pmatrix} m_0(\l) & -\tfrac 1 2  I_{H,H_0}\cr -\tfrac 1 2
P_{H_0,H} & 0\end{pmatrix}:\underbrace {H_0\oplus H}_{\bH}\to \underbrace{
H_0\oplus H}_{\bH}, \quad \l\in\CR\label{4.43}\\
S_1(\l)=\begin{pmatrix} m_0(\l)-\tfrac i 2 P_{\wh H} & M_{2+}(\l) \cr
-P_{H_0,H} & 0 \end{pmatrix}:\underbrace{H_0\oplus
\wt\cH_b}_{\cH_0}\to\underbrace{ H_0\oplus H}_{\bH},\quad \l\in\bC_+ \label{4.44}\\
S_2(\l)=\begin{pmatrix} m_0(\l)+\tfrac i 2 P_{\wh H} & -I_{H,H_0}\cr M_{3+}(\l)
& 0\end{pmatrix}:\underbrace{ H_0\oplus H}_{\bH}\to\underbrace{H_0\oplus
\cH_b}_{\cH_1}, \quad \l\in\bC_+ .\label{4.45}
\end{gather}
Then: {\rm (1)} $\Om_0(\cd)$ is the characteristic matrix corresponding to the generalized resolvent $R(\l)=(A_0-\l)^{-1}, \; \l\in\bC_+,$ of $\Tmi$.

{\rm (2)} For each boundary parameter $\pair$ of the form \eqref{2.1} the
operator $C_0 (\l)- C_1(\l) M_+(\l), \; \l\in\bC_+,$ is boundedly invertible;

{\rm (3)} The equality
\begin {equation}\label{4.46}
\Om(\l)=\Om_\tau(\l)=\Om_0(\l)+S_1(\l)(C_0 (\l)- C_1 M_+(\l))^{-1}
C_1(\l)S_2(\l), \quad \l\in\bC_+
\end{equation}
establishes a bijective correspondence between all boundary parameters $\pair$
defined by \eqref{2.1} and all characteristic matrices $\Om(\cd)$ of the system
\eqref{3.1}.
\end{theorem}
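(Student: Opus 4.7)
\medskip

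The plan is to reduce everything to the Krein-type resolvent formula associated with the decomposing boundary triplet $\Pi_+=\bta$ from Proposition \ref{pr3.3}, and then read off the characteristic matrix using the kernel representation of Theorem \ref{th4.6}. Part (2) of the theorem is really a black-box input: the operator $C_0(\l)-C_1(\l)M_+(\l)$ is exactly the denominator appearing in the abstract description, in \cite{Mog13.2}, of generalized resolvents of a symmetric relation via boundary parameters $\pair\in\wt R_+(\cH_0,\cH_1)$; its bounded invertibility on $\bC_+$ is a consequence of the Nevanlinna-type property of $\tau_+$ and the Weyl function $M_+$. So I would simply cite \cite[Theorem 3.11]{Mog13.2}, whose proof supplies this invertibility and the Krein-type formula
\begin{equation*}
R_\tau(\l)=(A_0-\l)^{-1}+\g_+(\l)(C_0(\l)-C_1(\l)M_+(\l))^{-1}C_1(\l)\g_-^*(\ov\l),\qquad \l\in\bC_+,
\end{equation*}
where $R_\tau(\cd)$ is the generalized resolvent produced in Theorem \ref{th4.3} (sign convention to be aligned with that theorem via the identity \eqref{4.32.0}).

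\medskip

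For part (1) I would first note that $R_0(\l):=(A_0-\l)^{-1}$ corresponds, via Theorem \ref{th4.3}, to the ``trivial'' boundary parameter $C_0(\l)\equiv I_{\cH_0},\; C_1(\l)\equiv 0$, so the Krein formula gives no correction and $\Om_0$ is just the characteristic matrix of $R_0$. To identify $\Om_0$ with \eqref{4.43}, I would write $(A_0-\l)^{-1}$ as an integral operator with Green-kernel constructed by variation of parameters using the fundamental solution $Y_0(\cd,\l)$ and the ``left'' $L_\D^2$-solution $v_0(\cd,\l)$ from Proposition \ref{pr3.5}; the boundary condition $\ker\G_0$ in \eqref{3.36} is encoded in the specific boundary values of $v_0(\cd,\l)$ supplied by \eqref{3.37}. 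Comparing with the form required by Theorem \ref{th4.6} and invoking \eqref{3.3} together with $\mathcal P_1 v_0(a,\l)=-P_{H_0,H}$ and $m_0(\l)-\tfrac i 2 P_{\wh H}=(\mathcal P_0+\wh{\mathcal P})v_0(a,\l)$ from \eqref{3.45}, one reads off precisely the block structure of \eqref{4.43}. The skew-symmetric $\tfrac12\sgn(t-x)J$ term in Theorem \ref{th4.6} is what produces the two off-diagonal $-\tfrac12$ entries.

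\medskip

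Part (3) is now the main calculation. Using $\g_\pm(\l)=\pi Z_\pm(\l)$ from \eqref{3.42} and formula \eqref{3.11} for the adjoint, the Krein-type formula rewrites as
\begin{equation*}
\bigl(R_\tau(\l)\wt f\bigr)(x)=\bigl(R_0(\l)\wt f\bigr)(x)+\int_\cI Z_+(x,\l)(C_0(\l)-C_1(\l)M_+(\l))^{-1}C_1(\l)Z_-^*(t,\ov\l)\D(t)f(t)\,dt.
\end{equation*}
Factoring $Z_+(x,\l)=Y_0(x,\l)Z_+(a,\l)$ and $Z_-^*(t,\ov\l)=Z_-^*(a,\ov\l)Y_0^*(t,\ov\l)$ by \eqref{3.3} and comparing with Theorem \ref{th4.6} applied to both $R_\tau$ and $R_0$, one obtains
\begin{equation*}
\Om_\tau(\l)-\Om_0(\l)=Z_+(a,\l)(C_0(\l)-C_1(\l)M_+(\l))^{-1}C_1(\l)Z_-^*(a,\ov\l),\qquad \l\in\bC_+.
\end{equation*}
The remaining step is the identifications $S_1(\l)=Z_+(a,\l)$ and $S_2(\l)=Z_-^*(a,\ov\l)$, which follow by a direct $3\times 2$ (resp.\ $2\times 3$) block-matrix computation from \eqref{3.40}, \eqref{3.41}, Proposition \ref{pr3.5}, and the formulas \eqref{3.45}--\eqref{3.49} for $m_0,M_{2+},M_{3+}$. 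Bijectivity of $\tau\mapsto\Om_\tau$ then follows at once by combining Theorem \ref{th4.3} (bijection $\tau\leftrightarrow R_\tau$) with the uniqueness of $\Om(\cd)$ in Theorem \ref{th4.6}.

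\medskip

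The main obstacle is the explicit block-matrix bookkeeping required to verify $S_1(\l)=Z_+(a,\l)$ and $S_2(\l)=Z_-^*(a,\ov\l)$ in the exact form prescribed by \eqref{4.44}, \eqref{4.45}; the subtlety is that the identity for $S_2$ relies on \eqref{3.49}, which expresses $m_0^*(\ov\l)$ and $M_{3+}^*(\ov\l)$ in terms of the ``lower half-plane'' solutions $v_0(\cd,\l)$ and $u_-(\cd,\l)$ (with $\l\in\bC_-$), and one must track adjoints and the complex-conjugation of spectral parameter carefully. Everything else is either cited or a standard variation-of-parameters computation.
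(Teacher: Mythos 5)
Your proposal follows essentially the same route as the paper: the Krein-type formula $R_\tau(\l)=(A_0-\l)^{-1}+\g_+(\l)(C_0(\l)-C_1(\l)M_+(\l))^{-1}C_1(\l)\g_-^*(\ov\l)$ from \cite[Theorem 3.11]{Mog13.2}, the Green-function representation of $(A_0-\l)^{-1}$ to identify $\Om_0(\l)$, the identifications $S_1(\l)=Z_+(a,\l)$ and $S_2(\l)=Z_-^*(a,\ov\l)$ via \eqref{3.45}--\eqref{3.49} (these are exactly the paper's equalities \eqref{4.59b}), and bijectivity from Theorem \ref{th4.3} combined with the uniqueness in Theorem \ref{th4.6}. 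The only cosmetic differences are that the paper obtains the invertibility of $C_0(\l)-C_1(\l)M_+(\l)$ by combining $(\tau_+(\l)+M_+(\l))^{-1}\in[\cH_1,\cH_0]$ with \cite[Lemma 2.1]{MalMog02}, and cites \cite[Theorem 6.2]{AlbMalMog13} for the explicit Green kernel rather than rederiving it by variation of parameters.
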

\begin{proof}
Let $\pair$ be a boundary parameter \eqref{2.1}. Since by Proposition
\ref{pr3.6} $M_+(\cd)$ is the Weyl function of the decomposing boundary triplet
$\Pi=\bta$ for $\Tma$, it follows from \cite[Theorem 3.11]{Mog13.2} that
$(\tau_+(\l)+M_+(\l))^{-1}\in [\cH_1,\cH_0], \; \l\in\bC_+$. Hence by
\cite[Lemma 2.1]{MalMog02} the operator $C_0 (\l)- C_1(\l) M_+(\l)$ is
boundedly invertible and
\begin {equation}\label{4.47}
T_\tau(\l):=-(\tau_+(\l)+M_+(\l))^{-1}=(C_0 (\l)- C_1(\l) M_+(\l))^{-1}C_1(\l),
\quad \l\in\bC_+.
\end{equation}
Next assume that $A_0(=\ker\G_0)$ is the extension \eqref{3.36} of $\Tmi$ and
that $\g_\pm(\cd) $ are the $\g$-fields  of the triplet $\Pi_+$. As it was
mentioned in the proof of Theorem \ref{th4.3} the generalized resolvent
$R_\tau(\l)$ is generated in fact by the boundary problem
\eqref{4.32.1}--\eqref{4.32.3}. Therefore according to \cite[Theorem
3.11]{Mog13.2} the following Krein formula for generalized  resolvents holds:
\begin {equation}\label{4.54}
R_\tau(\l)=(A_0-\l)^{-1}+\g_+(\l)T_\tau(\l)\g_-^*(\ov\l), \quad \l\in\bC_+.
\end{equation}
Let us show that for each $ \wt f\in\LI$ and $\l\in\bC_+$
\begin {equation}\label{4.55}
(A_0-\l)^{-1}\wt f =\pi\left(\int_\cI Y_0(\cd,\l)(\Om_0(\l)+\tfrac 1 2 \, {\rm
sgn}(t-x)J)Y_0^*(t,\ov\l)\D(t) f(t)\,dt \right), \quad f\in\wt f.
\end{equation}
It follows from \cite[Theorem 6.2]{AlbMalMog13} that  the equality
\begin {equation}\label{4.56}
(A_0-\l)^{-1}\wt f=\pi\left(\int_\cI  G_0 (\cd,t,\l)\D(t)f(t)\, dt \right ),
\quad \wt f\in\LI, \quad  f\in\wt f,\;\;\;\; \l\in\bC_+
\end{equation}
holds with the Green function $G_0(\cd,\cd,\l)$ of the form
\begin {equation} \label{4.57}
G_0 (x,t,\l)=\begin{cases} v_0(x,\l)\, \f^*(t,\ov\l), \;\; x>t \cr \f(x,\l)\,
v_0^* (t,\ov\l), \;\; x<t \end{cases}, \quad \l\in \bC_+.
\end{equation}
Here $\f(\cd,\l)$ is the $[H_0,\bH]$-valued operator solution of Eq.
\eqref{3.2} satisfying
\begin {equation*}
\f(a,\l)=\begin{pmatrix} I_{H_0} \cr 0 \end{pmatrix}:H_0\to H_0\oplus H,\quad
\l\in\bC,
\end{equation*}
and $v_0(\cd,\l)\in\lo{H_0}$ is the operator solution from Proposition
\ref{pr3.5}. Let
\begin {gather}
Y_+(t,\l):=(\f(t,\l), \, 0):H_0\oplus \cH_b\to\bH, \;\;\; \l\in\bC_+\label{4.58a}\\
Y_-(t,\l):=(\f(t,\l), \, 0):H_0\oplus \wt\cH_b\to\bH, \;\;\;
\l\in\bC_-\label{4.58b}
\end{gather}
and let $Z_\pm(t,\l)$ be given by \eqref{3.40} and \eqref{3.41}. Then
\eqref{4.57} can be represented as
\begin {equation} \label{4.59}
G_0 (x,t,\l)=\begin{cases} Z_+(x,\l)\, Y_-^*(t,\ov\l), \;\; x>t \cr Y_+(x,\l)\,
Z_-^* (t,\ov\l), \;\; x<t \end{cases}, \quad \l\in \bC_+.
\end{equation}
Since
\begin {equation*}
Z_\pm(a,\l)=\begin{pmatrix}(\cP_0+\wh\cP)v_0(a,\l) & (\cP_0+\wh\cP)u_\pm(a,\l)
\cr \cP_1 v_0(a,\l)& \cP_1u_\pm(a,\l)\end{pmatrix}, \quad \l\in\bC_\pm,
\end{equation*}
it follows from \eqref{3.45}, \eqref{3.49} and the first equalities in
\eqref{3.37} and \eqref{3.38} that
\begin {gather}
Z_+(a,\l)=\begin{pmatrix}m_0(\l)- \tfrac i 2 P_{\wh H} & M_{2+}(\l) \cr -
P_{H_0,H} & 0\end{pmatrix}: H_0\oplus\wt\cH_b\to H_0\oplus H, \quad
\l\in\bC_+\label{4.59a}\\
Z_-(a,\l)=\begin{pmatrix}m_0^*(\ov\l)- \tfrac i 2 P_{\wh H} & M_{3+}^*(\ov\l)
\cr - P_{H_0,H} & 0\end{pmatrix}: H_0\oplus\cH_b\to H_0\oplus H, \quad
\l\in\bC_-.\nonumber
\end{gather}
Therefore
\begin {equation} \label{4.59b}
Z_+(a,\l)= S_1(\l),  \qquad Z_-(a,\l)= S_2^*(\ov\l)
\end{equation}
and \eqref{3.3} yields
\begin {equation} \label{4.60}
Z_+(t,\l)=Y_0(t,\l) S_1(\l),\quad \l\in\bC_+;  \qquad Z_-(t,\l)=
Y_0(t,\l)S_2^*(\ov\l), \quad \l \in\bC_-.
\end{equation}
Moreover, by \eqref{4.58a} and  \eqref{4.58b}
\begin {equation*}
Y_+(a,\l)=\begin{pmatrix} I_{H_0} & 0 \cr 0 & 0
\end{pmatrix}: H_0\oplus\cH_b \to H_0\oplus H, \quad Y_-(a,\l)=\begin{pmatrix}
I_{H_0} & 0 \cr 0 & 0\end{pmatrix}: H_0\oplus\wt\cH_b \to H_0\oplus H
\end{equation*}
and \eqref{3.3} gives $Y_+(t,\l)=Y_0(t,\l) Y_+(a,\l)$ and $ Y_-(t,\l)=
Y_0(t,\l) Y_-(a,\l)$. This and  \eqref{4.59} imply that
\begin {equation} \label{4.61}
G_0 (x,t,\l)=\begin{cases} Y_0(x,\l)(S_1(\l)Y_-^*(a,\ov\l))Y_0^*(t,\ov\l), \;\;
x>t \\  Y_0(x,\l)(Y_+(a,\l)S_2(\l))Y_0^*(t,\ov\l), \;\; x<t
\end{cases}, \quad \l\in \bC_+.
\end{equation}
Observe also that  the operator $J$ can be represented as
\begin {equation} \label{4.61a}
J=\begin{pmatrix}iP_{\wh H} & -I_{H,H_0}\cr P_{H_0,H} & 0
\end{pmatrix}:H_0\oplus H\to H_0\oplus H.
\end{equation}
and the direct calculations with taking \eqref{4.43} into account give
\begin {equation*}
S_1(\l)Y_-^*(a,\ov\l)=\Om_0(\l)-\tfrac 1 2 J, \quad
Y_+(a,\l)S_2(\l)=\Om_0(\l)+\tfrac 1 2 J, \;\;\;\l\in\bC_+.
\end{equation*}
Combining these equalities with \eqref{4.61} and \eqref{4.56} one gets
\eqref{4.55}. Hence statement (1) holds.

Next in view of \eqref{3.42} and \eqref{4.60} $\g_-(\ov\l)=\pi Z_-(\ov\l)$.
Therefore, by Lemma \ref{lem3.1b} and the second equality in \eqref{4.60}, for
each $\wt f\in\LI$ and $\l\in\bC_+$ one has
\begin {equation*}
\g_-^*(\ov\l)\wt f=\int_\cI Z_-^*(t,\ov\l) \D(t)f(t)\,dt=\int_\cI
S_2(\l)Y_0^*(t,\ov\l)\D(t)f(t)\,dt, \quad f\in\wt f.
\end{equation*}
This and the first equalities in \eqref{3.42} and \eqref{4.60} imply that for
each $\wt f\in\LI$ and $\l\in\bC_+$
\begin {equation} \label{4.62}
\g_+(\l)T_\tau(\l)\g_-^*(\ov\l)\wt f= \pi \left( \int_\cI Y_0(\cd,\l)S_1(\l)
T_\tau(\l)S_2(\l)Y_0^*(t,\ov\l)\D(t)f(t)\,dt \right), \quad f\in\wt f.
\end{equation}
Now combining \eqref{4.54} with \eqref{4.55} and \eqref{4.62} we obtain  the
equality
\begin {equation*}
R_\tau(\l)\wt f=\pi\left(\int_\cI Y_0(\cd,\l)(\Om_\tau(\l)+\tfrac 1 2 \, {\rm
sgn}(t-x)J)Y_0^*(t,\ov\l)\D(t) f(t)\,dt \right), \;\; \wt
f\in\LI,\;\;\l\in\bC_+,
\end{equation*}
where $\Om_\tau(\cd)$ is the operator function \eqref{4.46}. Thus
$\Om_\tau(\cd)$ is the characteristic matrix of the generalized resolvent
$R_\tau(\l)$, which in view of Theorem \ref{th4.3} yields statement (3) of the theorem.
\end{proof}
Let as before $M_+(\l), \; \l\in\bC_+,$ be given by \eqref{3.43}--\eqref{3.47}.
With each boundary parameter $\pair$ of the form  \eqref{2.1} we associate a
holomorphic operator function $\wt\Om_{\tau }(\cd):\bC_+\to [\cH_0\oplus\cH_1,
\cH_1\oplus\cH_0] $ given by
\begin{gather}
\wt\Om_{\tau}(\l)=\begin{pmatrix}\wt\om_{1}(\l) & \wt\om_{2}(\l) \cr
\wt\om_{3}(\l) & \wt\om_{4}(\l) \end{pmatrix}:
\cH_0\oplus\cH_1\to \cH_1\oplus\cH_0, \quad \l\in\bC_+\label{4.65}\\
\wt\om_{1}(\l)=M_+(\l)-M_+(\l)(\tau_+(\l) +M_+(\l))^{-1}M_+(\l)
 \label{4.66}\\
\wt\om_{2}(\l)= -\tfrac 1 2 I_{\cH_1} +M_+(\l) (\tau_+(\l) +M_+(\l))^{-1}
\label{4.67}\\
\wt\om_{3}(\l)=-\tfrac 1 2 I_{\cH_0} +(\tau_+(\l) +M_+(\l))^{-1}M_+(\l),\qquad
\wt\om_{4}(\l)=-(\tau_+(\l) + M_+(\l))^{-1}\label{4.69}
\end{gather}
It follows from \eqref{4.47} that the equalities \eqref{4.66}--\eqref{4.69} can
be represented as
\begin{gather}
\wt\om_{1}(\l)=M_+(\l)(C_0(\l)-C_1(\l)M_+(\l))^{-1} C_0(\l) \label{4.70}\\
\wt\om_{2}(\l)=-\tfrac 1 2 I_{\cH_1} -M_+(\l) (C_0(\l)-C_1(\l)M_+(\l))^{-1}
C_1(\l)\label{4.71}\\
\wt\om_{3}(\l)=\tfrac 1 2 I_{\cH_0}-(C_0(\l)-C_1(\l)M_+(\l))^{-1} C_0(\l),
\label{4.72}\\
\wt\om_{4}(\l)=(C_0(\l)-C_1(\l)M_+(\l))^{-1} C_1(\l).\label{4.73}
\end{gather}

In the following proposition we give a somewhat other parametrization of all
characteristic matrices $\Om(\l)$ (cf. \eqref{4.46}).
\begin{proposition}\label{pr4.9}
Let $P_{\wh H}\in [H_0]$ be the orthoprojection  in $H_0$ onto $\wh H$ and let
\begin{gather}
X_1=\begin{pmatrix}P_{\cH_1,H_0} & \tfrac i 2 P_{\wh H} P_{\cH_0,H_0} \cr 0 &
P_{\cH_0,H}\end{pmatrix}: \cH_1\oplus\cH_0\to\underbrace{H_0\oplus
H}_{\bH}\label{4.74}\\
X_2=\begin{pmatrix}P_{\cH_0,H_0} & \tfrac i 2 P_{\wh H} P_{\cH_1,H_0} \cr 0 &
P_{\cH_1,H}\end{pmatrix}: \cH_0\oplus\cH_1\to\underbrace{H_0\oplus
H}_{\bH}\label{4.75}
\end{gather}
(clearly the operators $P_{\cH_j,H_0}$ and  $P_{\cH_j,H}$ make sense, because
in view of \eqref{3.33} $H\subset H_0\subset \cH_j, \; j\in\{0,1\}$). Then for
each boundary parameter $\pair$ the corresponding characteristic matrix
$\Om(\l)=\Om_\tau(\l)$ of the system \eqref{3.1} admits the representation
\begin {equation} \label{4.76}
\Om_\tau(\l)=X_1 \wt \Om_\tau(\l)X_2^*, \quad \l\in\bC_+.
\end{equation}
\end{proposition}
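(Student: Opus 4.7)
The plan is to combine the Krein-type formula (4.46) for $\Om_\tau$ with a matrix factorization of $\wt\Om_\tau$ compatible with $X_1,X_2$. Setting $T(\l):=-(\tau_+(\l)+M_+(\l))^{-1} = (C_0(\l)-C_1(\l)M_+(\l))^{-1}C_1(\l)$, cf.\ (4.47), the expressions (4.66)--(4.69) for the blocks of $\wt\Om_\tau$ yield the factorization
\begin{equation*}
\wt\Om_\tau(\l) \;=\; \wt\Om_0'(\l) + \wt S_1'(\l)\,T(\l)\,\wt S_2'(\l), \qquad \l\in\bC_+,
\end{equation*}
where
\begin{equation*}
\wt\Om_0'(\l)=\begin{pmatrix} M_+(\l) & -\tfrac 1 2 I_{\cH_1} \\ -\tfrac 1 2 I_{\cH_0} & 0 \end{pmatrix},\quad \wt S_1'(\l)=\begin{pmatrix} M_+(\l) \\ -I_{\cH_0} \end{pmatrix},\quad \wt S_2'(\l)=\bigl(M_+(\l),\,-I_{\cH_1}\bigr).
\end{equation*}
Since (4.46) can be rewritten as $\Om_\tau = \Om_0+S_1\,T\,S_2$, the equality (4.76) reduces to the three block identities
\begin{equation*}
X_1\,\wt\Om_0'(\l)\,X_2^* = \Om_0(\l),\qquad X_1\,\wt S_1'(\l) = S_1(\l),\qquad \wt S_2'(\l)\,X_2^* = S_2(\l).
\end{equation*}

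Each identity is verified by a direct block-matrix computation using the representation (3.43) of $M_+$ and the canonical relations $P_{\cH_j,H_0} I_{H_0,\cH_j} = I_{H_0}$, $P_{\cH_0,H} I_{H_0,\cH_0} = P_{H_0,H}$, and $P_{\cH_1,H_0} I_{H,\cH_1} = I_{H,H_0}$ among the projections and embeddings of the nested spaces $H\subset H_0\subset \cH_j$. For instance, using $P_{\cH_1,H_0} M_+ = (m_0,M_{2+})$ and $P_{\cH_0,H_0} = (I_{H_0},0)$ with respect to $\cH_0 = H_0\oplus\wt\cH_b$, one finds
\begin{equation*}
X_1\,\wt S_1' \;=\; \begin{pmatrix} P_{\cH_1,H_0} M_+ - \tfrac i 2 P_{\wh H} P_{\cH_0,H_0} \\ -P_{\cH_0,H} \end{pmatrix} \;=\; \begin{pmatrix} m_0-\tfrac i 2 P_{\wh H} & M_{2+} \\ -P_{H_0,H} & 0 \end{pmatrix} \;=\; S_1,
\end{equation*}
and $\wt S_2' X_2^* = S_2$ follows analogously by identifying $M_+ I_{H_0,\cH_0}$ with the first block-column $\bigl(\begin{smallmatrix} m_0 \\ M_{3+}\end{smallmatrix}\bigr)$ of $M_+$. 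The identity $X_1\wt\Om_0' X_2^* = \Om_0$ is the most delicate: its $(1,1)$-block collects $m_0 = P_{\cH_1,H_0} M_+ I_{H_0,\cH_0}$ together with two $\tfrac i 4 P_{\wh H}$ cross-terms that cancel by the aforementioned projection relations, while the off-diagonal and $(2,2)$-blocks reduce to $-\tfrac 1 2 I_{H,H_0}$, $-\tfrac 1 2 P_{H_0,H}$, and $0$, matching (4.43).

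Combining these identities yields
\begin{equation*}
X_1\,\wt\Om_\tau(\l)\,X_2^* \;=\; X_1\,\wt\Om_0'\,X_2^* + (X_1\,\wt S_1')\,T(\l)\,(\wt S_2'\,X_2^*) \;=\; \Om_0 + S_1\,T(\l)\,S_2 \;=\; \Om_\tau(\l),
\end{equation*}
establishing (4.76). The main step is discovering the compatible factorization of $\wt\Om_\tau$ that separates the $T$-dependent and $T$-independent parts in a form adapted to $X_1,X_2$; once this is in hand, the three block identities are routine bookkeeping of nested projections and embeddings.
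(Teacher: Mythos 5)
Your proof is correct and takes essentially the same route as the paper: both arguments start from the Krein-type formula \eqref{4.46} together with \eqref{4.47} and reduce \eqref{4.76} to a direct block-matrix verification using the representations \eqref{4.79.0}, \eqref{4.79a} of $S_1(\l)$, $S_2(\l)$ in terms of $M_+(\l)$ and the nested projections/embeddings coming from $H\subset H_0\subset\cH_j$. Your intermediate factorization $\wt\Om_\tau(\l)=\wt\Om_0'(\l)+\wt S_1'(\l)T(\l)\wt S_2'(\l)$ is simply a cleaner bookkeeping of the same computation (three identities in place of the paper's comparison of the four blocks in \eqref{4.78} and \eqref{4.80}), and all three identities check out.
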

\begin{proof}
Let $T_\tau(\l)$ be given by \eqref{4.47}. Since
\begin {equation} \label{4.77}
X_2^*=\begin{pmatrix}I_{H_0,\cH_0} & 0 \cr -\tfrac i 2 I_{H_0,\cH_1}P_{\wh H} &
I_{H,\cH_1} \end{pmatrix}:H_0\oplus H\to\cH_0\oplus\cH_1,
\end{equation}
it follows from \eqref{4.74} and \eqref{4.65}--\eqref{4.69} that
\begin {equation} \label{4.78}
X_1 \wt \Om_\tau(\l)X_2^*=\begin{pmatrix}\om_{1}(\l) & \om_{2}(\l) \cr
\om_{3}(\l) & \om_{4}(\l) \end{pmatrix}: H_0\oplus H\to H_0\oplus H, \quad
\l\in\bC_+,
\end{equation}
where
\begin{gather*}
\om_1(\l)=m_0(\l)+P_{\cH_1,H_0}M_+(\l)T_\tau(\l)M_+(\l)\up H_0+ \tfrac i
2P_{\cH_1,H_0}M_+(\l)T_\tau(\l)\up H_0\cd P_{\wh H}- \\
\tfrac i 2 P_{\wh H}P_{\cH_0,H_0}T_\tau(\l)M_+(\l)\up H_0+
 \tfrac 1 4 P_{\wh
H}P_{\cH_0,H_0}T_\tau(\l)\up H_0\cd P_{\wh H} \\
\om_2(\l)=-\tfrac 1 2 I_{H,H_0}-P_{\cH_1,H_0}M_+(\l)T_\tau(\l)\up H+\tfrac i 2
P_{\wh H}P_{\cH_0,H_0}T_\tau(\l)\up H\nonumber\\
\om_3(\l)=-\tfrac 1 2 P_{H_0,H}-P_{\cH_0,H}T_\tau(\l)M_+(\l)\up H_0-\tfrac i 2
P_{\cH_0,H}T_\tau(\l)\up H_0\cd P_{\wh H}\nonumber\\
\om_4(\l)=P_{\cH_0,H}T_\tau(\l)\up H\nonumber
\end{gather*}
(in the  equality for $\om_1(\l)$ we made use of the relation
$m_0(\l)=P_{\cH_1,H_0} M_+(\l)\up H_0$ implied by \eqref{3.43}). Next, in view
of \eqref{3.43} the equalities \eqref{4.44} and \eqref{4.45} can be written as
\begin{gather}
S_1(\l)=\begin{pmatrix}P_{\cH_1,H_0}M_+(\l)-\tfrac i 2 P_{\wh H}
P_{\cH_0,H_0}\cr - P_{\cH_0,H}\end{pmatrix}:\cH_0\to H_0\oplus H\label{4.79.0}\\
S_2(\l)=(M_+(\l)\up H_0+\tfrac i 2 I_{H_0,\cH_1} P_{\wh H}, \;
-I_{H,\cH_1}):H_0\oplus H\to\cH_1\label{4.79a}
\end{gather}
This and \eqref{4.46} yield $ \Om_\tau(\l)=\begin{pmatrix} m_0(\l) & -\tfrac 1
2  I_{H,H_0}\cr -\tfrac 1 2 P_{H_0,H} & 0\end{pmatrix}+$

$ +\begin{pmatrix}P_{\cH_1,H_0}M_+(\l)-\tfrac i 2 P_{\wh H} P_{\cH_0,H_0}\cr -
P_{\cH_0,H}\end{pmatrix} \cd T_\tau(\l)\cd (M_+(\l)\up H_0+\tfrac i 2
I_{H_0,\cH_1} P_{\wh H}, \; -I_{H,\cH_1})$

 and the immediate calculations show
that
\begin {equation} \label{4.80}
\Om_{\tau}(\l)=\begin{pmatrix}\om_{1}(\l) & \om_{2}(\l) \cr \om_{3}(\l) &
\om_{4}(\l) \end{pmatrix}: H_0\oplus H\to H_0\oplus H, \quad \l\in\bC_+
\end{equation}
Now comparing \eqref{4.78} and \eqref{4.80} we arrive at the equality
\eqref{4.76}.
\end{proof}
\begin{theorem}\label{th4.10}
Assume the hypotheses of Lemma \ref{lem4.2}. Moreover, let $\pair$ be a
boundary parameter defined by \eqref{2.1} and \eqref{4.1}--\eqref{4.4} and let
$C_a(\l)$ and $C_b(\l)$ be given by \eqref{4.10} and \eqref{4.11}. Then:

{\rm (1)} For each $\l\in\bC_+$ there exists a unique operator solution
$Z_\tau(\cd,\l)\in\cL_\D^2 [\bH]$ of Eq. \eqref{3.2} satisfying the boundary
condition
\begin {equation} \label{4.81}
C_a (\l)(Z_\tau (a,\l)+J)+C_b(\l)\G_b Z_\tau(\l)=0, \quad \l\in\bC_+
\end{equation}
(here $Z_\tau(\l)$ is the mapping \eqref{3.10} for $Z_\tau(\cd,\l)$).

{\rm (2)} The corresponding characteristic matrix $\Om_\tau(\cd)$ satisfies
\begin {equation} \label{4.82}
\Om_\tau(\l)=Z_\tau(a,\l) + \tfrac 1 2 J, \quad \l\in\bC_+.
\end{equation}

{\rm (3)} The following inequality holds
\begin {equation} \label{4.83}
(\im \l)^{-1}\cd \im \Om_\tau(\l)\geq \int_\cI Z_\tau^*(t,\l) \D(t)
Z_\tau(t,\l) \, dt, \quad  \l\in\bC_+.
\end{equation}
\end{theorem}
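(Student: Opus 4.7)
The plan is to take Theorem~\ref{th4.7} as the starting point and construct $Z_\tau(\cd,\l)$ explicitly. Define
\[
Z_\tau(t,\l):=Y_0(t,\l)\bigl(\Om_\tau(\l)-\tfrac12 J\bigr),\qquad t\in\cI,\;\l\in\bC_+;
\]
then $Z_\tau(\cd,\l)$ is an operator solution of Eq.~\eqref{3.2} by~\eqref{3.3}, $Z_\tau(a,\l)=\Om_\tau(\l)-\tfrac12 J$, and so~\eqref{4.82} holds by construction. For the $L^2_\D$-property, substituting~\eqref{4.46} and~\eqref{4.47} one writes $Z_\tau(t,\l)=Y_0(t,\l)(\Om_0(\l)-\tfrac12 J)+Z_+(t,\l)T_\tau(\l)S_2(\l)$, where $Z_+(\cd,\l)=Y_0(\cd,\l)S_1(\l)\in\cL_\D^2[\cH_0,\bH]$ by~\eqref{4.60} and Proposition~\ref{pr3.6}, and $T_\tau(\l)$ is bounded. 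A direct comparison of~\eqref{4.43},~\eqref{4.44} with~\eqref{4.61a} gives $\Om_0(\l)-\tfrac12 J=S_1(\l)\,I_{H_0,\cH_0}P_{\bH,H_0}$, so the first summand equals $Z_+(\cd,\l)\,I_{H_0,\cH_0}P_{\bH,H_0}\in\cL_\D^2[\bH]$. Hence $Z_\tau(\cd,\l)\in\cL_\D^2[\bH]$.

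To verify the boundary condition~\eqref{4.81}, I would use the integral representation of Theorem~\ref{th4.6}. Fix $b_0\in(a,b)$ and $f\in\lI$ supported in $[a,b_0]$, and set $c:=\int_a^{b_0} Y_0^*(t,\ov\l)\D(t)f(t)\,dt\in\bH$. Splitting the Green kernel according to $\sign(t-x)$, for $x\in(b_0,b)$ one reads off $(R_\tau(\l)\pi f)(x)=Z_\tau(x,\l)c$, while $(R_\tau(\l)\pi f)(a)=(Z_\tau(a,\l)+J)c$. Because the difference $R_\tau(\l)\pi f-Z_\tau(\cd,\l)c$ vanishes near $b$, the non-degeneracy of the form in~\eqref{3.29} forces $\G_b(R_\tau(\l)\pi f)=(\G_b Z_\tau(\cd,\l))c$, and the boundary condition~\eqref{4.29} satisfied by $R_\tau(\l)\pi f$ (Theorem~\ref{th4.3}) yields
\[
\bigl[C_a(\l)(Z_\tau(a,\l)+J)+C_b(\l)\G_b Z_\tau(\cd,\l)\bigr]c=0.
\]
The set of such $c$ is dense in $\bH$ by the definiteness of the system at $\ov\l$, whence~\eqref{4.81} holds as an operator identity. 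Uniqueness reduces to Theorem~\ref{th4.3}: any competitor $Z$ gives $W=Z-Z_\tau$ with $C_a(\l)W(a,\l)+C_b(\l)\G_b W=0$, so column-wise $W(\cd,\l)h$ solves the homogeneous boundary problem with $f=0$ and hence vanishes.

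The inequality~\eqref{4.83} is the main technical step. The tool is the Lagrange identity for operator solutions of~\eqref{3.2},
\[
2i\,\im\l\int_\cI Z_\tau^*(t,\l)\D(t)Z_\tau(t,\l)\,dt=[Z_\tau,Z_\tau]_b-Z_\tau^*(a,\l)JZ_\tau(a,\l),
\]
where the singular boundary term satisfies $[Z_\tau,Z_\tau]_b=(\G_b Z_\tau)^*J_b(\G_b Z_\tau)$ by the operator-valued extension of~\eqref{3.29}. Using $J^*=-J$, $J^2=-I_\bH$ and $\Om_\tau(\l)=Z_\tau(a,\l)+\tfrac12 J$, a short algebraic identity gives
\[
\Om_\tau(\l)-\Om_\tau^*(\l)+Z_\tau^*(a,\l)JZ_\tau(a,\l)=(Z_\tau(a,\l)+J)^*J(Z_\tau(a,\l)+J).
\]
Combining the two displays, \eqref{4.83} reduces to the operator positivity
\[
-i\bigl[(Z_\tau(a,\l)+J)^*J(Z_\tau(a,\l)+J)-(\G_b Z_\tau)^*J_b(\G_b Z_\tau)\bigr]\geq 0.
\]
By~\eqref{4.81} the pair $(W,V):=(Z_\tau(a,\l)+J,\G_b Z_\tau)$ lies in $\ker(C_a(\l),C_b(\l))$. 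The main obstacle is now to transport the positivity furnished by~\eqref{4.16} on $\ran(C_a(\l),C_b(\l))^*$ (viewed as a maximal non-negative subspace of the Krein space $(\bH\oplus\bH_b,\;\mathrm{diag}(iJ,-iJ_b))$) to the complementary subspace $\ker(C_a(\l),C_b(\l))$, using the surjectivity~\eqref{4.15} and a standard Krein-space duality argument; this is exactly the abstract content of the bijection in Lemma~\ref{lem4.2} between Nevanlinna parameters and admissible pairs $(C_a,C_b)$.
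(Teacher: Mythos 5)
Your route is genuinely different from the paper's. The paper constructs $Z_\tau(\cd,\l)$ abstractly as $-Z_+(t,\l)(C_0(\l)-C_1(\l)M_+(\l))^{-1}C_a(\l)J$, verifies \eqref{4.81} through the boundary-triplet identities $\G_0\{\wt y,\l\wt y\}=h_0$, $\G_1\{\wt y,\l\wt y\}=M_+(\l)h_0$ combined with \eqref{4.32.0}, and only then derives \eqref{4.82} by matching this formula against \eqref{4.46}; for part (3) it proves the $\g$-field identities \eqref{4.90} and invokes an external lemma from \cite{Mog11} applied to the Krein-type formula. You reverse the logic: defining $Z_\tau:=Y_0(\cd,\l)(\Om_\tau(\l)-\tfrac12 J)$ makes \eqref{4.82} automatic, your factorization $\Om_0(\l)-\tfrac12J=S_1(\l)I_{H_0,\cH_0}P_{\bH,H_0}$ is correct (it reproduces the paper's $Z_0(\cd,\l)$) and gives the $\cL_\D^2$ membership, and \eqref{4.81} is read off from the boundary condition \eqref{4.29} satisfied by $R_\tau(\l)\pi f$ for compactly supported $f$. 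The two supporting facts you use there do hold: $\G_b$ annihilates functions vanishing near $b$ because the form on the right of \eqref{3.29} is non-degenerate and $\G_b$ is surjective, and the vectors $c$ span $\bH$ by definiteness. Your Lagrange-identity reduction of (3), including the algebra $(Z_\tau(a,\l)+J)^*J(Z_\tau(a,\l)+J)=Z_\tau^*(a,\l)JZ_\tau(a,\l)+\Om_\tau(\l)-\Om_\tau^*(\l)$ (which uses $J^*=-J$, $J^2=-I_\bH$), is also correct and is more self-contained than the paper's appeal to \cite{Mog11}.

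The step you yourself label the ``main obstacle'' is, however, a genuine gap as written: a ``standard Krein-space duality argument'' does not transfer non-negativity of $L:=\ran\begin{pmatrix}C_a^*(\l)\cr C_b^*(\l)\end{pmatrix}$ to non-positivity of $\ker(C_a(\l),C_b(\l))$ unless $L$ is \emph{maximal} non-negative — if the Gram operator were positive definite, every subspace would be non-negative and no nonzero subspace would have non-positive orthocomplement. Two observations close it. First, the Gram operator $G=\mathrm{diag}(iJ,-iJ_b)$ has signature exactly $(\dim\cH_0,\dim\cH_1)$ ($iJ$ contributes $(\dim H,\,\dim H+\dim\wh H)$ and $-iJ_b$ contributes $(\dim\wt\cH_b+\dim\wh H,\,\dim\cH_b)$), so by \eqref{4.15} and \eqref{4.16} the subspace $L$, being non-negative of dimension $\dim\cH_0$, is maximal non-negative; hence its $G$-orthogonal complement $L^{[\perp]}$ is non-positive. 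Second, $\ker(C_a(\l),C_b(\l))$ is the \emph{Hilbert} orthocomplement $L^\perp=G\,L^{[\perp]}$ rather than $L^{[\perp]}$ itself; but $G=G^*=G^{-1}$ is an isometry of the Krein inner product, so $G\,L^{[\perp]}$ is non-positive as well. With these two points supplied, your proof of \eqref{4.83} is complete.
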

\begin{proof}
(1) Let $\Pi_+=\bta$ be the decomposing boundary triplet
\eqref{3.33}--\eqref{3.35} for $\Tma$ and let $M_+(\cd)$ and $\g_+(\cd)$ be the
Weyl function and the $\g$-field of $\Pi_+$ respectively. Moreover, let
$Z_+(\cd,\l)\in\lo{\cH_0}$ be the operator solution of Eq. \eqref{3.2} defined
in Proposition \ref{pr3.6} and let
\begin {equation} \label{4.84}
Z_\tau(t,\l):=-Z_+(t,\l) (C_0(\l)-C_1(\l)M_+(\l))^{-1}C_a(\l)J, \quad
\l\in\bC_+.
\end{equation}
Clearly, $Z_\tau(\cd,\l)\in\cL_\D^2[\bH]$ and $Z_\tau(\cd,\l)$ is an operator
solution of Eq. \eqref{3.2}. Let us show that $Z_\tau(\cd,\l)$ satisfies
\eqref{4.81}. Assume that $h\in\bH$, $h_0:=-(C_0(\l)-C_1(\l)M_+(\l))^{-1}
C_a(\l)Jh$ and
\begin {equation} \label{4.85}
 \wt y=\pi (Z_\tau(\cd,\l)h).
\end{equation}
 Then by \eqref{4.84} $\wt y=\pi Z_+(\l)h_0$ and the
equality \eqref{3.42} yields
\begin {equation} \label{4.86}
\wt y=\g_+(\l)h_0.
\end{equation}
Combining \eqref{4.85} with \eqref{4.32.0} one gets
\begin {equation} \label{4.87}
C_0(\l)\G_0\{\wt y,\l \wt y\}-C_1(\l)\G_1\{\wt y,\l \wt
y\}=(C_a(\l)Z_\tau(a,\l)+C_b(\l)\G_b Z_\tau(\l))h.
\end{equation}
On the other hand, combining of \eqref{4.86} with \eqref{2.10} and \eqref{2.8}
yields $\G_0 \{\wt y,\l \wt y\}=h_0$ and $\G_1\{\wt y,\l \wt y\}=M_+(\l)h_0$.
Therefore
\begin {equation*}
C_0(\l)\G_0\{\wt y,\l \wt y\}-C_1(\l)\G_1\{\wt y,\l \wt y\}=(C_0(\l)-
C_1(\l)M_+(\l))h_0=-C_a(\l)J h.
\end{equation*}
Comparing this equality with \eqref{4.87} one obtains
\begin {equation*}
C_a(\l)Z_\tau(a,\l)+C_b(\l)\G_b Z_\tau(\l)=-C_a(\l)J,\quad \l\in\bC_+.
\end{equation*}
This implies \eqref{4.81}.

To prove uniqueness of $Z_\tau(\cd,\l)$ assume that $\wt Z_\tau(\cd,\l)
\in\cL_\D^2[\bH] $ is another  solution of Eq. \eqref{3.2} satisfying
\eqref{4.81}. Then for each $h\in\bH$ the function $y=(Z_\tau(t,\l)-\wt
Z_\tau(t,\l))h$ is a solution of the homogeneous boundary problem \eqref{4.28},
\eqref{4.29} (with $f=0$). Since by Theorem \ref{th4.3}  such a problem has a
unique solution $y=0$, it follows that $Z_\tau(t,\l)=\wt Z_\tau(t,\l)$.

(2) Assume that $S_2(\l)$ is given by \eqref{4.45} and that
\begin {equation} \label{4.87a}
Z_0(t,\l):=(Z_+(t,\l)\up H_0, \; 0):H_0\oplus H\to \bH, \quad \l\in\bC_+.
\end{equation}
Then by \eqref{4.59a}
\begin {equation*}
Z_0(a,\l)=\begin{pmatrix} m_0(\l) - \tfrac i 2 P_{\wh H} & 0 \cr -P_{H_0,H} &
0\end{pmatrix}:H_0\oplus H\to H_0\oplus H
\end{equation*}
and the equalities \eqref{4.43} and \eqref{4.61a} yield
\begin {equation} \label{4.87b}
Z_0(a,\l)=\Om_0(\l)- \tfrac 1 2 J, \quad \l\in\bC_+.
\end{equation}
Next we show that
\begin {equation} \label{4.88}
 Z_\tau(t,\l)=Z_0(t,\l)+Z_+(t,\l)(C_0(\l)-C_1(\l)M_+(\l))^{-1}C_1(\l)S_2(\l),
 \quad \l\in\bC_+.
\end{equation}
Since by \eqref{4.10}
\begin {equation*}
C_a(\l)J=(-C_{0a}(\l), \; -\wh C_0(\l)-\tfrac i 2 \wh C_1(\l), \; C_{1a}(\l)):
H\oplus\wh H\oplus H\to \cH_0,
\end{equation*}
it follows from \eqref{4.1} and \eqref{4.2} that
\begin {equation}\label{4.89}
C_a(\l)Jh=-C_0(\l)(\cP_0 h+\wh\cP h)-\tfrac i 2 C_1(\l)\wh\cP h+C_1(\l)\cP_1h,
 \quad h\in\bH.
\end{equation}
Let $P_{\bH,H_0}(\in [\bH,H_0])$ be the orthoprojection in $\bH$ onto $H_0$
(see \eqref{3.0.1}) and let as before $P_{\wh H}(\in [H_0])$ be the
orthoprojection in $H_0$ onto $\wh H$. Then $\cP_0 h+\wh\cP h=P_{\bH,H_0}h,\; \wh\cP h
=P_{\wh H}P_{\bH,H_0}h$ and the equality \eqref{4.89} can be written as
\begin {equation*}
C_a(\l)Jh=-C_0(\l)P_{\bH,H_0} h -\tfrac i 2 C_1(\l) P_{\wh H}P_{\bH,H_0}h+
C_1(\l)\cP_1h,  \quad h\in\bH.
\end{equation*}
Moreover, by \eqref{4.79a}
\begin {equation*}
S_2(\l)h=M_+(\l)P_{\bH,H_0} h +\tfrac i 2  P_{\wh H}P_{\bH,H_0}h-\cP_1h,  \quad h\in\bH,
\end{equation*}
and combining of the last two equalities yields
\begin {equation*}
C_a(\l)Jh=-(C_0(\l)-C_1(\l)M_+(\l))P_{\bH,H_0}h-C_1(\l)S_2(\l)h, \quad h\in\bH.
\end{equation*}
This and  \eqref{4.84} imply that for each $h\in\bH$
\begin {equation}\label{4.89a}
Z_\tau(t,\l)h=Z_+ (t,\l)P_{\bH,H_0}h+Z_+ (t,\l)(C_0(\l)-C_1(\l)M_+(\l))^{-1} C_1(\l)S_2(\l)h.
\end{equation}
Since by \eqref{4.87a} $Z_+ (t,\l)P_{\bH,H_0}h=Z_0(t,\l)h$, it follows from \eqref{4.89a} that $Z_\tau(t,\l)$ admits the representation \eqref{4.88}.

Now combining \eqref{4.88} with \eqref{4.87b} and the first equality in \eqref{4.59b} and then taking \eqref{4.46} into account one obtains the equality \eqref{4.82}.

(3) Let us show that
\begin {equation}\label{4.90}
\Om_0(\mu)-\Om_0^*(\l)=(\mu-\ov\l)\g_1^*(\l)\g_1(\mu), \;\;\;\; S_1(\mu)-S_2^*(\l)P_1=(\mu-\ov\l)\g_1^*(\l)\g_+(\mu),
\end{equation}
where $\mu,\l\in\bC_+, \; P_1=P_{\cH_0,\cH_1}$  is the orthoprojection in $\cH_0$ onto $\cH_1$ and
\begin {equation}\label{4.90a}
\g_1(\l)=(\g_+(\l)\up H_0,\; 0):H_0\oplus H\to \LI.
\end{equation}
The first equality in \eqref{4.90} is immediate from \eqref{2.9}. Next, by \eqref{4.79a} one has
\begin {equation*}
S_2^*(\l)P_1=\begin{pmatrix}P_{\cH_0,H_0}M_+^*(\l)-\tfrac i 2 P_{\wh H}P_{\cH_1,H_0} \cr - P_{\cH_1,H}\end{pmatrix}P_1= \begin{pmatrix}P_{\cH_0,H_0}M_+^*(\l)P_1-\tfrac i 2 P_{\wh H}P_{\cH_0,H_0} \cr - P_{\cH_0,H}\end{pmatrix},
\end{equation*}
which in view of \eqref{4.79.0} yields
\begin {equation}\label{4.91}
S_1(\mu)-S_2^*(\l)P_1=\begin{pmatrix}P_{\cH_1,H_0}M_+(\mu)-P_{\cH_0,H_0}M_+^*(\l)
P_1 \cr 0 \end{pmatrix}:\cH_0\to H_0\oplus H, \;\; \mu,\l\in\bC_+.
\end{equation}
Since $H_0\subset \cH_1$, it follows that $H_0\perp \cH_2$ and hence $P_{\cH_0,H_0}P_2=0$. Therefore application of the operator $P_{\cH_0,H_0}$ to the identity \eqref{2.9} yields
\begin {equation*}
P_{\cH_1,H_0}M_+(\mu)-P_{\cH_0,H_0} M_+^*(\l)P_1=(\mu-\ov\l)P_{\cH_0,H_0} \g_+^*(\l)\g_+(\mu), \quad \mu,\l\in\bC_+.
\end{equation*}
Combining this equality with \eqref{4.91} one gets the second equality in \eqref{4.90}.

Now application of \cite[Lemma 21]{Mog11} to the representation \eqref{4.46} of $\Om_\tau(\cd)$ with taking \eqref{4.90} and \eqref{2.9} into account yields
\begin {equation}\label{4.92}
\im \Om_\tau(\l)\geq \im\l\cd \g_\tau^*(\l)\g_\tau(\l), \quad \l\in\bC_+,
\end{equation}
where
\begin {equation}\label{4.93}
\g_\tau(\l)=\g_1(\l)+\g_+(\l)(C_0(\l)-C_1(\l)M_+(\l))^{-1} C_1(\l)S_2(\l), \quad \l\in\bC_+.
\end{equation}
Since by \eqref{3.42} $\g_+(\l)=\pi Z_+(\l)$, it follows from \eqref{4.90a} and \eqref{4.87a} that $\g_1(\l)=\pi Z_0(\l)$. Combining these equalities with \eqref{4.93} and \eqref{4.88} we obtain
\begin {equation}\label{4.94}
\g_\tau(\l)=\pi Z_\tau (\l),
\end{equation}
where $Z_\tau(\l)$ is the mapping \eqref{3.10} for $Z_\tau(\cd,\l)$. Therefore by Lemma \ref{lem3.1b}
\begin {equation}\label{4.95}
\g_\tau^*(\l)\wt f=\int_\cI Z_\tau^*(t,\l)\D(t)f(t)\,dt,\quad \wt f\in\LI,\;\;f\in\wt f,
\end{equation}
and combining  of \eqref{4.92} with \eqref{4.94} and \eqref{4.95}  gives \eqref{4.83}.
\end{proof}
\subsection{The case of equal deficiency indices}
In the case of equal deficiency indices $n_+(\Tmi)=n_-(\Tmi)$ the above results can be rather simplified. Namely, the following theorems are immediate from Theorems \ref{th4.7}, \ref{th4.10} and Proposition \ref{pr4.9}.
\begin{theorem}\label{th4.11}
Let $n_+(\Tmi)=n_-(\Tmi)$ (so that $\wt\cH_b=\cH_b$), let $\cH=H_0\oplus\cH_b$, let $A_0=A_0^*$ be given by \eqref{3.36} and let $M(\cd)$ be the (Nevanlinna) operator function defined by  \eqref{3.50}--\eqref{3.52}. Moreover, let $\Om_0(\cd)$ be the operator function \eqref{4.43} and let
\begin {equation}\label{4.96}
S(\l)=\begin{pmatrix} m_0(\l)-\tfrac i 2 P_{\wh H} & M_{2}(\l) \cr
-P_{H_0,H} & 0 \end{pmatrix}:\underbrace{H_0\oplus
\cH_b}_{\cH}\to\underbrace{ H_0\oplus H}_{\bH},\quad \l\in\CR.
\end{equation}
Then: {\rm (1)} $\Om_0(\cd)$ is the characteristic matrix corresponding to the canonical resolvent $(A_0-\l)^{-1}$ and the equality
\begin {equation*}
\Om(\l)=\Om_\tau(\l)=\Om_0(\l)+S(\l)(C_0 (\l)- C_1 M(\l))^{-1}
C_1(\l)S^*(\ov\l), \quad \l\in\CR
\end{equation*}
establishes a bijective correspondence between all boundary parameters $\tau$
 of  the form \eqref{2.3} and all characteristic matrices $\Om(\cd)$ of the system \eqref{3.1}. Moreover, the characteristic matrix $\Om(\cd)=\Om_\tau(\cd)$ is canonical if and only if the boundary parameter $\tau$ is self-adjoint.

{\rm (2)} For each boundary parameter $\tau=\tau(\l)$  the corresponding characteristic matrix $\Om(\l)=\Om_\tau(\l)$ of the system  \eqref{3.1} admits the representation
\begin {equation*}
\Om_\tau(\l)=X \wt \Om_\tau(\l)X^*, \quad \l\in\CR,
\end{equation*}
with the Nevanlinna operator function $\wt \Om_\tau(\cd): \CR\to [\cH\oplus\cH]) $ of the form \eqref{1.25} and the operator $X\in [\cH\oplus\cH,\bH]$ given by
\begin{gather}\label{4.98}
X=\begin{pmatrix}P_{\cH,H_0} & \tfrac i 2 P_{\wh H} P_{\cH,H_0} \cr 0 &
P_{\cH,H}\end{pmatrix}: \cH\oplus\cH\to\underbrace{H_0\oplus
H}_{\bH}
\end{gather}
\end{theorem}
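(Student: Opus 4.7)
The plan is to derive Theorem \ref{th4.11} as a specialization of the previously established Theorem \ref{th4.7}, Proposition \ref{pr4.9} and Theorem \ref{th4.3} to the situation $n_+(\Tmi)=n_-(\Tmi)$, where the boundary construction collapses to an \emph{ordinary} boundary triplet. First I would invoke Proposition \ref{pr3.3} (second part) to assert that the decomposing triplet $\Pi_+=\bta$ becomes an ordinary boundary triplet $\Pi=\bt$ with $\cH_0=\cH_1=\cH=H_0\oplus\cH_b$ and $A_0=A_0^*$. Corollary \ref{cor3.7} then provides a single pair of $L_\D^2$-solutions $v_0(\cd,\l), u(\cd,\l)$ on $\rho(A_0)$ such that the Weyl function $M(\cd)$ of $\Pi$ coincides with the block matrix \eqref{3.50}--\eqref{3.52}; in particular $M_+(\l)=M(\l)$ for $\l\in\bC_+$, and $M(\cd)$ is Nevanlinna (Remark \ref{rem2.11}).

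Next I would verify the two key \emph{symmetry identities} $S_1(\l)=S(\l)$ and $S_2(\l)=S^*(\ov\l)$ for $\l\in\bC_+$, by comparing \eqref{4.44}, \eqref{4.45} with \eqref{4.96}. The first identity is immediate since $M_{2+}(\l)=M_2(\l)$. For the second, combining \eqref{3.45} with \eqref{3.49} yields the Nevanlinna symmetry $m_0(\l)=m_0^*(\ov\l)$ on $\rho(A_0)$ and, via the uniqueness of $u_\pm$, the identity $M_{3+}(\l)=M_2^*(\ov\l)$. Plugging these into \eqref{4.45} and transposing \eqref{4.96} gives $S_2(\l)=S^*(\ov\l)$ directly.

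With these identifications in hand, part (1) follows at once: formula \eqref{4.46} of Theorem \ref{th4.7} specializes on $\bC_+$ to
\begin{equation*}
\Om_\tau(\l)=\Om_0(\l)+S(\l)(C_0(\l)-C_1(\l)M(\l))^{-1}C_1(\l)S^*(\ov\l),
\end{equation*}
and the extension to the lower half-plane is obtained by the Nevanlinna symmetry $\Om^*(\l)=\Om(\ov\l)$ of characteristic matrices (noting that both $\Om_0(\cd)$ and $S(\l)\, T_\tau(\l)\, S^*(\ov\l)$ are manifestly Nevanlinna in the equal-index case). The fact that $\Om_0(\cd)$ corresponds to $(A_0-\l)^{-1}$ is statement (1) of Theorem \ref{th4.7}, and the characterization of canonical $\Om_\tau(\cd)$ by self-adjointness of $\tau$ is precisely the second half of Theorem \ref{th4.3} combined with Definition \ref{def2.1b}. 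Part (2) is then a direct specialization of Proposition \ref{pr4.9}: since $\cH_0=\cH_1=\cH$, the operators $X_1$ and $X_2$ defined by \eqref{4.74}, \eqref{4.75} both collapse to the operator $X$ in \eqref{4.98}, and formula \eqref{4.76} gives the desired representation $\Om_\tau(\l)=X\wt\Om_\tau(\l)X^*$ on $\bC_+$, with $\wt\Om_\tau(\cd)$ from \eqref{4.65}--\eqref{4.69} taking the form \eqref{1.25}; again Nevanlinna symmetry extends the formula to $\CR$.

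The only point that requires genuine checking is the symmetry identification $S_2(\l)=S^*(\ov\l)$ (and the corresponding Nevanlinna property of $M$), since everything else is a formal collapse of the asymmetric case $n_-<n_+$ to $n_-=n_+$. I do not expect a serious obstacle, as the identities \eqref{3.49} were stated precisely to provide this bridge; the main care is in bookkeeping the embedding/projection operators $P_{H_0,H}$, $I_{H,H_0}$ and $P_{\wh H}$ when transposing blocks.
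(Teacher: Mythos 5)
Your proposal is correct and follows exactly the paper's route: the paper itself derives Theorem \ref{th4.11} by declaring it immediate from Theorems \ref{th4.7}, \ref{th4.10} and Proposition \ref{pr4.9} in the equal-index case, which is precisely your specialization. The identifications $M_+(\l)=M(\l)$, $S_1(\l)=S(\l)$, $S_2(\l)=S^*(\ov\l)$ (via \eqref{3.49} and the Nevanlinna symmetry of $M$) and $X_1=X_2=X$ that you check explicitly are exactly the bookkeeping the paper leaves implicit.
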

\begin{theorem}\label{th4.12}
Let $n_+(\Tmi)=n_-(\Tmi)$, let $\bH_b=\cH_b\oplus\wh H\oplus\cH_b$ and let $J_b$ be the operator \eqref{1.12}. Moreover, let $\tau$ be a boundary parameter defined by  \eqref{2.3} and \eqref{4.5}, \eqref{4.6} and let $C_a(\l)$ and $C_b(\l)$ be the operator functions \eqref{4.18} and \eqref{4.19}. Then for each $\l\in\CR$ there exists a unique operator solution
$Z_\tau(\cd,\l)\in\cL_\D^2 [\bH]$ of Eq. \eqref{3.2} satisfying \eqref{4.81} (with $\l\in\CR$). Moreover, $\Om_\tau(\l)=Z_\tau(a,\l) + \frac 1 2 J, \; \l\in\CR,$ and the inequality \eqref{4.83} is valid for all $\l\in\CR$.

If in addition $\tau$ is a self-adjoint boundary parameter, then the following identity holds:
\begin {equation*}
\Om_\tau(\mu)-\Om_\tau^*(\l)=(\mu-\ov\l)\int_\cI Z_\tau^*(t,\l) \D(t)
Z_\tau(t,\mu) \, dt, \quad  \mu,\l\in\CR.
\end{equation*}
This implies that for the canonical characteristic matrix $\Om_\tau(\cd)$ the inequality \eqref{4.83} turns into the equality, which holds for all $\l\in\CR$.
\end{theorem}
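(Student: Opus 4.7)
The first part of the theorem -- existence and uniqueness of $Z_\tau(\cd,\l)$, the formula $\Om_\tau(\l)=Z_\tau(a,\l)+\tfrac12 J$, and the inequality \eqref{4.83} for all $\l\in\CR$ -- is obtained by repeating the proof of Theorem \ref{th4.10} with $\bC_+$ replaced by $\CR$. In the equal-deficiency case the decomposing boundary triplet from Proposition \ref{pr3.3} is ordinary with $\cH=H_0\oplus\cH_b$, so $A_0=A_0^*$ and $\CR\subset\rho(A_0)$; Corollary \ref{cor3.7} therefore provides operator solutions $v_0(\cd,\l)$ and $u(\cd,\l)$ of Eq. \eqref{3.2} on all of $\CR$, and hence the $[\cH,\bH]$-valued solution $Z(t,\l):=(v_0(t,\l),u(t,\l))$ and the Weyl function $M(\l)$ are defined on $\CR$. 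With these ingredients, the definition $Z_\tau(t,\l):=-Z(t,\l)(C_0(\l)-C_1(\l)M(\l))^{-1}C_a(\l)J$ (cf. \eqref{4.84}), the verification of \eqref{4.81}, the uniqueness argument via Theorem \ref{th4.3}, and the derivation of \eqref{4.83} via Lemma \ref{lem3.1b} all transfer line by line.

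For the two-argument identity under a self-adjoint boundary parameter $\tau=\tau^*$, the key is that $T_\tau(\l):=-(\tau+M(\l))^{-1}$ becomes itself a Nevanlinna operator function, and a direct computation yields
\begin{equation*}
T_\tau(\mu)-T_\tau^*(\l)=T_\tau^*(\l)\bigl(M(\mu)-M^*(\l)\bigr)T_\tau(\mu),\qquad \mu,\l\in\CR.
\end{equation*}
Combining this with the two-argument Weyl-function identity $M(\mu)-M^*(\l)=(\mu-\ov\l)\g^*(\l)\g(\mu)$ (the equal-deficiency specialization of \eqref{2.9}, in which $P_2=0$), and using the identities \eqref{4.90} for $\Om_0$ and $S_1,S_2$ (noting that in the present case $S_2(\l)=S^*(\ov\l)$ with $S$ from \eqref{4.96}), one can compute $\Om_\tau(\mu)-\Om_\tau^*(\l)$ directly from the Krein formula \eqref{4.46} and obtain
\begin{equation*}
\Om_\tau(\mu)-\Om_\tau^*(\l)=(\mu-\ov\l)\g_\tau^*(\l)\g_\tau(\mu),\qquad \mu,\l\in\CR,
\end{equation*}
where $\g_\tau(\l)=\pi Z_\tau(\l)$ as in \eqref{4.94}. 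Lemma \ref{lem3.1b} then converts the right-hand side into the integral $(\mu-\ov\l)\int_\cI Z_\tau^*(t,\l)\D(t)Z_\tau(t,\mu)\,dt$, giving the asserted identity. The equality case of \eqref{4.83} for canonical $\Om_\tau$ follows immediately by setting $\mu=\l$ and using $\Om_\tau^*(\l)=\Om_\tau(\ov\l)$.

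The main obstacle is the clean derivation of the two-argument identity for $\Om_\tau$. Theorem \ref{th4.10} only supplied the imaginary-part inequality via the $Q$-function-type bound from \cite[Lemma 21]{Mog11}; promoting this to a genuine difference formula requires the extra structure provided by $\tau=\tau^*$, which makes $T_\tau(\cd)$ Nevanlinna and permits a two-argument computation in place of a one-argument estimate. The bookkeeping is delicate because $\Om_\tau$ acts on $\bH=H_0\oplus H$ whereas $M$, $\g$, and $T_\tau$ act on $\cH=H_0\oplus\cH_b$, so the finite-dimensional projections $P_{\wh H}$, $P_{\cH,H_0}$, $P_{\cH,H}$ appearing in $X$, $S$, and $\Om_0$ must be tracked carefully to verify that the cross terms in the Krein formula organize into the product $(\mu-\ov\l)\g_\tau^*(\l)\g_\tau(\mu)$.
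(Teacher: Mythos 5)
Your proposal is correct and follows the route the paper intends: the paper itself gives no separate proof of Theorem \ref{th4.12}, declaring it immediate from Theorems \ref{th4.7}, \ref{th4.10} and Proposition \ref{pr4.9}, and your first part is exactly that specialization (ordinary boundary triplet, $\CR\subset\rho(A_0)$, the proof of Theorem \ref{th4.10} repeated verbatim on $\CR$). For the two-argument identity your computation --- the resolvent-type identity for $T_\tau$ when $\tau=\tau^*$, combined with \eqref{2.9} (with $P_2=0$) and the identities \eqref{4.90}, yielding $\Om_\tau(\mu)-\Om_\tau^*(\l)=(\mu-\ov\l)\g_\tau^*(\l)\g_\tau(\mu)$ with $\g_\tau(\l)=\pi Z_\tau(\l)$ --- is the standard argument the author leaves implicit, and it checks out.
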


\end{document}